\theoremstyle{plain}      
\newtheorem{thm}{Theorem}[section]     
\newtheorem{cor}[thm]{Corollary}     
\newtheorem{lem}[thm]{Lemma}     
\newtheorem{lemma}[thm]{Lemma}
\newtheorem{prop}[thm]{Proposition}
\newtheorem{rem}[thm]{Remark}
\newtheorem{remark}[thm]{Remark}
\theoremstyle{definition}
\newtheorem{definition}[thm]{Definition}     
\newtheorem{example}[thm]{Example}
\renewcommand{\epsilon}{\varepsilon}
\let\theta\vartheta
\let\phi\varphi
\let\na\nabla
\DeclareMathAlphabet{\doba}{U}{msb}{m}{n}
\gdef\mC{\doba{C}}
\gdef\mH{\doba{H}}
\gdef\mN{\doba{N}}
\gdef\mR{\doba{R}}
\gdef\mS{\doba{S}}
\gdef\1{\mathbbm{1}}
\def\Spec{{\mathop{\rm Spec}}}
\def\End{{\mathop{\rm End}}}
\def\Spin{{\mathop{\rm Spin}}}
\def\SO{{\mathop{\rm SO}}}
\def\supp{{\mathop{\rm supp}}}
\def\vol{{\mathop{\rm vol}}}
\def\Id{\operatorname{Id}}
\def\dom{{\mathop{\rm dom}}}
\def\ran{{\mathop{\rm ran}}}
\newcommand{\PSO}{P_{\rm SO}}
\newcommand{\PSpin}{P_{\rm Spin}}
\def\Hvec{{\vec{H}}}
\def\trace{{\mathop{\mathrm{tr}}}}
\def\intr{\mathrm{int}}
\def\II{{\rm I\kern-1pt I}}
\let\pa\partial
\let\witi\widetilde
\let\wihat\widehat
\let\Si \Sigma
\let\al\alpha
\let\la\lambda
\let\spec\Spec
\let\C\mC
\let\R\mR
\let\al\alpha
\let\ep\epsilon
\let\om\omega
\newcommand{\dist}{\mathrm{dist}}
\newcommand{\vo}{\mathrm{dvol}}
\renewcommand{\d}{\mathrm{d}}
\renewcommand{\Mc}{\mathbb{M}_c}
\renewcommand{\i}{\mathrm{i}}
\renewcommand{\Im}{\mathrm{Im\,}}
\renewcommand{\Re}{\mathrm{Re\,}}
\newcommand{\Hom}{\mathop{\mathrm{Hom}}}
\newcommand{\definedas}{\mathrel{\raise.095ex\hbox{\rm :}\mkern-5.2mu=}}
\def\id{\mathop{\rm id}}
\begin{document}

%

%
%

\title 
[$L^p$-spectrum of the Dirac operator]
{$L^p$-spectrum of the Dirac operator on products with hyperbolic spaces}

\author{Bernd Ammann} 
\address{Fakult\"at f\"ur Mathematik, Universit\"at Regensburg, 93040
Regensburg, Germany}
\email{bernd.ammann@mathematik.uni-regensburg.de}

\author{Nadine Gro\ss e} 
\address{Institut f\"ur Mathematik, Universit\"at Leipzig, 04109 Leipzig,
Germany}
\email{grosse@math.uni-leipzig.de}

\subjclass[2010]{ 58J50, 34B27}

\date{\today}

\keywords{Dirac operator, $L^p$-spectrum, Green function, hyperbolic space, 
product spaces}

\begin{abstract}
We study the $L^p$-spectrum of the Dirac operator on complete manifolds. 
One of the main questions in this context is whether this spectrum depends on
$p$. As a first example where $p$-independence fails we compute 
explicitly the $L^p$-spectrum  for the hyperbolic space and its product with compact
spaces. 
\end{abstract}

\maketitle

\section{Introduction}

The $L^p$-spectrum of the Laplacian and its $p$-(in)dependence was and still
is studied by many authors, e.g. in  \cite{davies97},
\cite{davies_simon_taylor_88}, \cite{hempel_voigt_86}. On closed manifolds one
easily sees that the spectrum is independent of $p\in [1,\infty]$. For open manifolds, 
independence only holds under additional geometric conditions. 
Hempel and Voigt \cite{hempel_voigt_86},
\cite{hempel_voigt_87} proved such results 
for Schr\"odinger operators in $\mR^n$ with potentials
admitting certain singularities. Then Kordyukov \cite{kord_91} 
generalized this result to  uniformly elliptic operators with uniformly 
bounded smooth coefficients on a manifold of 
bounded geometry with subexponential volume growth. 
Independently,  Sturm \cite{sturm_93} 
showed the independence of 
the $L^p$-spectrum for a class of uniformly elliptic operators 
in divergence form on manifolds with uniformly subexponential volume growth 
and Ricci
curvature bounded from below. Both results include 
the Laplacian acting on functions.  Later the Hodge-Laplacian acting 
on $k$-forms was considered. E.g. under the assumptions of the result by Sturm
from above, Charalambous proved the $L^p$-independence for the 
Hodge-Laplacian in \cite[Proposition 9]{Char05}. The machinery used to obtain
these independence results uses estimates for the 
heat kernel as in \cite{saloff-coste_92}.

In contrast, the $L^p$-spectrum of the Laplacian on
the hyperbolic space does depend on $p$ \cite[Theorem 5.7.1]{davies_90}. 
Its $L^p$-spectrum is the convex hull of 
a parabola 
in the complex plane, and this spectrum degenerates only for $p=2$ to a ray 
on the real axis, cf. Remark~\ref{rem.spec.D2}. 

In addition to the intrinsic interest of the $p$-independence 
of the $L^p$-spectrum, such
results were used to get information on the $L^2$-spectrum by
considering the $L^1$-spectrum, as in particular examples the $L^1$-spectrum
can be easier to control. The result of Sturm
was used for example by Wang \cite[Theorem 3]{wang_97} to prove that the 
spectrum of the Laplacian acting on functions on complete manifolds with
asymptotically non-negative Ricci curvature is $[0,\infty)$.

Explicit calculations for the Laplace-Beltrami operator on locally symmetric spaces were carried out recently by 
Ji and Weber, see e.g. \cite{ji.weber:p10}, \cite{weber:diss}.

About the $L^p$-spectrum of the Dirac operator much less is known. 
As before, on closed manifold the spectrum is independent on $p\in [0,\infty]$.
Kordyukov's methods \cite{kord_91}
do not apply directly to the Dirac operator $D$, but 
following a remark of \cite[Page 224]{kord_91} his methods generalize
to suitable systems, and thus also to the square $D^2$. Unfortunately, the 
system case is not completely worked out, but it seems to us, that the
case of systems is completely analogous to the case of operators on functions. 
Assuming this, Kordyukov has shown that the spectrum of $D^2$ is 
$p$-independent for $1\leq p<\infty$ on manifolds with bounded geometry and
subexponential volume growth. For many such manifolds (e.g. for all such 
manifolds of even dimension or all manifolds of dimension $4k+1$), this 
already implies the $p$-independence of the $L^p$-spectrum of $D$, see
our Lemma~\ref{spec_D2} together with the following symmetry considerations.

Many of the results and techniques that were constructed up 
for Laplace operators are not yet developed 
for Dirac operators. For the Dirac operator such independence results
would not only be of
interest on their own, e.g., for (classical) Dirac operators certain
$L^p$-spaces and $L^p$-spectral gaps naturally occur 
when considering a spinorial Yamabe-type problem which was our  motivation
to enter into this subject, see \cite{ammann.grosse:p13b}.\\

In this paper we determine explicitly the $L^p$-spectrum for a
special class of complete manifolds -- products of compact spaces 
with hyperbolic spaces. More
precisely, we study the following manifolds:  

Let $(N^n,g_N)$ be a closed Riemannian spin manifold. Let $M=\Mc$ be the product
manifold $(\Mc^{m,k}= \mH_c^{k+1}\times N^n,
g_M=g_{\mH_c^{k+1}}+g_N)$ where $\mH_c^{k+1}$ is the $(k+1)$-dimensional
hyperbolic space scaled such that its scalar curvature is $-c^2k(k+1)$ for
$c\neq 0$ and $\mH_0^{k+1}$ is the $(k+1)$-dimensional Euclidean space.
For those manifolds we obtain the following result which is also illustrated in
Figure~\ref{fig_main_thm}:

\begin{thm}\label{main_inv} 
We use the notions from above. Let $p\in [1,\infty]$, and $c\geq 0$.
The $L^p$-spectrum of the Dirac operator on $\Mc^{m,k}=\mH_c^{k+1}\times N^n$ is
given by the set 

\[\sigma_p:= \left\{\mu\in \mC\ \Bigg|\  \mu^2=\lambda_0^2+\kappa^2, |\Im
\kappa|\leq ck\left| \frac{1}{p}-\frac{1}{2}\right| \right\}\]
where $\lambda_0^2$ is the lowest eigenvalue of $(D^N)^2$, $\lambda_0\geq 0$, and
$D^N$ is the Dirac operator on $(N, g_N)$.
In particular, the  Dirac operator $D\colon  H_1^p\to L^p$ on $\Mc^{m,k}$ has a
bounded inverse if and only if
$\lambda_0> ck \left| \frac{1}{p}-\frac{1}{2}\right|$.
\end{thm}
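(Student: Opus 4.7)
The plan is to reduce the computation to the hyperbolic-space case $N=\{\mathrm{pt}\}$, for which I assume the paper has already established
\[
\sigma_p(D^{\mH_c^{k+1}}) = \Bigl\{\kappa\in\mC : |\Im \kappa|\le ck\Bigl|\tfrac{1}{p}-\tfrac{1}{2}\Bigr|\Bigr\},
\]
by separating variables on the closed factor $N$ and exploiting the product structure of the Dirac operator.

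Under the standard identification $\Sigma M = \Sigma \mH_c^{k+1}\otimes \Sigma N$ (with the usual doubling when $k+1$ and $n$ have the wrong parities), the product Dirac operator takes the form $D^M = D^{\mH}\otimes 1 + \omega\otimes D^N$, where $\omega$ is a parallel involution on $\Sigma \mH_c^{k+1}$ with $\omega^2=1$ and $\omega D^{\mH}=-D^{\mH}\omega$. Since $N$ is closed, $D^N$ has discrete real spectrum with finite-dimensional eigenspaces $V_\lambda\subset C^\infty(\Sigma N)$, so I can write every spinor on $M$ as $\Phi(x,y)=\sum_{\lambda,j}\psi_{\lambda,j}(x)\phi_{\lambda,j}(y)$ for an $L^2$-orthonormal basis $\{\phi_{\lambda,j}\}$ of each $V_\lambda$. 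On the $\lambda$-block, $D^M$ acts on the hyperbolic factor as $A_\lambda:=D^{\mH}+\lambda\omega$, and the anticommutation yields $A_\lambda^2=(D^{\mH})^2+\lambda^2$. Combining the spectral mapping theorem for $A_\lambda^2$, the relation $\omega A_\lambda\omega^{-1}=-A_{-\lambda}$, and approximate eigenfunctions obtained by twisting hyperbolic plane waves with the relevant Clifford-spinor mode, one should obtain
\[
\sigma_p(A_\lambda) = \Bigl\{\mu\in\mC : \mu^2 = \lambda^2+\kappa^2,\ |\Im\kappa|\le ck\Bigl|\tfrac{1}{p}-\tfrac{1}{2}\Bigr|\Bigr\},
\]
and the full $L^p$-spectrum of $D^M$ is then $\bigcup_{\lambda\in \sigma(D^N)} \sigma_p(A_\lambda)$.

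Next I would collapse this union to the single set indexed by $\lambda_0$. For $s\ge 0$ and any $\kappa\in\mC$, the triangle inequality $|\kappa^2+s|\le |\kappa^2|+s$ gives
\[
|\kappa^2+s|-\Re(\kappa^2+s) \;\le\; |\kappa^2|-\Re\kappa^2 \;=\; 2(\Im\kappa)^2,
\]
so with $s=\lambda^2-\lambda_0^2\ge 0$ every $\mu\in\sigma_p(A_\lambda)$ satisfies $\mu^2-\lambda_0^2=(\kappa')^2$ with $|\Im\kappa'|\le ck|1/p-1/2|$. Hence $\sigma_p(A_\lambda)\subseteq \sigma_p(A_{\lambda_0})$ for all $\lambda\in\sigma(D^N)$, and the union collapses to $\sigma_p(A_{\lambda_0})=\sigma_p$. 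The bounded-inverse statement is immediate, since $0\in\sigma_p$ iff $-\lambda_0^2=\kappa^2$ for some $\kappa$ in the strip, i.e.\ $\kappa=iy$ with $y^2=\lambda_0^2$ and $|y|\le ck|1/p-1/2|$, equivalently $\lambda_0\le ck|1/p-1/2|$.

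The principal technical obstacle is justifying the $L^p$-decomposition: for $p\ne 2$ the expansion of spinors along the eigenspinors of $D^N$ is not an orthogonal direct sum, so one has to verify, via Fubini on the product together with uniform $L^\infty$ and higher-Sobolev bounds on the normalized $\phi_{\lambda,j}$ coming from elliptic regularity on the compact $N$, that the $L^p$-resolvent of $D^M$ genuinely factors through the $L^p$-resolvents of the $A_\lambda$. This must be shown in both directions: for $\mu\notin\sigma_p$ one assembles a bounded inverse of $D^M-\mu$ on $L^p(\Sigma M)$ out of the bounded inverses of the $A_\lambda-\mu$ with uniform norm bounds in $\lambda$, and for $\mu\in\sigma_p$ one constructs an approximate Weyl sequence on $M$ from approximate eigenfunctions of $A_{\lambda_0}$ and a lowest eigenspinor on $N$. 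Once this functional-analytic bookkeeping and the spectrum of $A_\lambda$ are secured, the theorem reduces to the hyperbolic computation plus the elementary collapse argument above.
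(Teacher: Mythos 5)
Your reduction and collapse argument are clean, and the inequality $|\kappa^2+s|-\Re(\kappa^2+s)\le|\kappa^2|-\Re\kappa^2$ is a nice observation that correctly shows $\sigma_p(A_\lambda)\subseteq\sigma_p(A_{\lambda_0})$ for $\lambda^2\ge\lambda_0^2$; the easy direction ($\sigma_p\subseteq\Spec_{L^p}$) by tensoring approximate eigenspinors on $\mH^{k+1}_c$ against a ground eigenspinor on $N$ is essentially what the paper does in Section~\ref{sec.last}. The problem is the step you yourself flag as ``the principal technical obstacle,'' and it is not a bookkeeping issue that a few Sobolev bounds will resolve. To conclude $\Spec_{L^p}(D^M)\subseteq\bigcup_\lambda\sigma_p(A_\lambda)$ you must show that the block operator $\sum_\lambda (A_\lambda-\mu)^{-1}\otimes P_\lambda$ is bounded on $L^p(\Si_M)$, where $P_\lambda$ is the spectral projection of $D^N$. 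For $p=2$ this is Pythagoras, but for $p\ne 2$ having each block $(A_\lambda-\mu)^{-1}$ bounded on $L^p(\Si_{\mH_c^{k+1}})$ with uniform (even decaying) norm does \emph{not} imply boundedness of the sum: the eigenprojections $P_\lambda$ are not uniformly $L^p$-contractive and the decomposition is far from unconditional in $L^p$. One would need some operator-valued Littlewood--Paley / UMD-type machinery for the family $\{P_\lambda\}$, which is nontrivial and nowhere sketched in your proposal. Nor do you address how to get the $L^1$- and $L^\infty$-endpoints, which are where the shape of $\sigma_p$ is actually decided.

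The paper avoids the $L^p$-block-summation problem entirely. It establishes the inclusion $\Spec_{L^p}\subseteq\sigma_p$ by estimating the integral kernel of the resolvent directly: the Green function $G_{D-\mu}$ is split into a near-diagonal piece controlled by homogeneity of $\mH_c^{k+1}$ (Proposition~\ref{Green_sing}) and a far-off-diagonal piece controlled by an exponential decay estimate (Lemma~\ref{decay_Green}, itself based on the ODE analysis in Proposition~\ref{decay_mode}, which is where the mode decomposition in $D^N$ is used --- but only at the $L^2$-level, and \emph{uniformly} in $\lambda$). These kernel bounds give $L^1\to L^1$ and $L^\infty\to L^\infty$ boundedness via a Schur-type test (Proposition~\ref{Green_smooth}), and then Stein interpolation (Theorem~\ref{stein_int}) fills in $1<p<\infty$, with the symmetry in the Green function (Corollary~\ref{sym_Green}) ensuring only the lowest spherical mode contributes. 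So the mode decomposition appears in the paper too, but it is used to derive a single pointwise kernel bound rather than to assemble an $L^p$-resolvent block by block. If you want to pursue your route you would need to supply the missing vector-valued harmonic analysis; as written, the central containment has a genuine gap.
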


For an overview of the structure of the proof, see the end of the introduction.

From the Theorem \ref{main_inv} one can directly read of the $L^p$-spectrum of
$D^2$ and compare it to the known spectrum of the Laplacian acting on functions
which is done in Remark \ref{rem.spec.D2}.\\

\begin{figure}[h]
 \centering
\begin{tikzpicture}
 \path[draw, ->] (0,0)   to (3,0);
 \path (3.4,0.2) node {\small $\Re \mu$};
 \path[draw, ->]  (1.5,-1.5) to (1.5,1.5);
 \path (1.9, 1.6) node {\small $\Im \mu$};
\fill[fill=gray, opacity=0.3] (0,1)  -- (3,1) -- (3,-1) -- (0,-1) -- cycle;
 \path[draw, line width = 1.2]  (0,1) to (3,1);
 \path[draw, line width = 1.2]  (0,-1) to (3,-1);
\path[draw, ->] (0.5,1.5) node {\small $x_L$} (0.7,1.4) to (1.4, 1.1);

 \path[draw, ->] (4.5,0)   to (7.5,0);
  \path (7.9,0.2) node {\small $\Re \mu$};
  \path[draw, ->]  (6,-1.5) to (6,1.5);
  \path (6.4, 1.6) node {\small $\Im \mu$};
  \path[draw, dotted]  (4.5,1) to (7.5,1);
  \path[draw, dotted]  (4.5,-1) to (7.5,-1);
\fill[fill=gray, opacity=0.3] plot[domain=7.5:4.5,samples=30] (\x,{ 
-0.85*(1.2-1/(1+3*(\x-6)^2))  }) -- plot[domain=4.5:7.5,samples=30] (\x,{ 
0.85*(1.2-1/(1+3*(\x-6)^2))  }) --  cycle;
  \draw[line width=1] plot[domain=4.5:7.5,samples=50] (\x,{ 
0.85*(1.2-1/(1+3*(\x-6)^2))  });
  \draw[line width=1] plot[domain=4.5:7.5,samples=50] (\x,{ 
-0.85*(1.2-1/(1+3*(\x-6)^2))  });
  \path[draw, ->] (9,0)   to (12,0);
  \path (12.4,0.2) node {\small $\Re \mu$};
  \path[draw, ->]  (10.5,-1.5) to (10.5,1.5);
  \path (10.9, 1.6) node {\small $\Im \mu$};
\path[draw, ->] (5,1.5) node {\small $x_M$} (5.05,1.35) to (5.95, 0.25); 

  \path[draw, dotted]  (9,1) to (12,1);
  \path[draw, dotted]  (9,-1) to (12,-1);
\fill[fill=gray, opacity=0.3] (12,0.9) -- plot[domain=11.5:10.8,samples=30]
(\x,{ 0.5*sqrt(3*(\x-10.8))}) -- plot[domain=10.8:11.5,samples=30] (\x,{
-0.5*sqrt(3*(\x-10.8))}) --  (12,-0.9) -- cycle ;
  \draw[line width=1] plot[domain=10.8:11.5,samples=30] (\x,{
0.5*sqrt(3*(\x-10.8))});
 \path[draw, line width = 1] (11.5,0.725)  to (12,0.9) ;
 \draw[line width=1] plot[domain=10.8:11.5,samples=30] (\x,{
-0.5*sqrt(3*(\x-10.8))});
 \path[draw, line width = 1] (11.5,-0.725)  to (12,-0.9) ;
 \fill[fill=gray, opacity=0.3] (9,0.9) -- plot[domain=9.5:10.2,samples=30] (\x,{
0.5*sqrt(-3*(\x-10.2))}) -- plot[domain=10.2:9.5,samples=30] (\x,{
-0.5*sqrt(-3*(\x-10.2))}) --  (9,-0.9) -- cycle ;
 \draw[line width=1] plot[domain=9.5:10.2,samples=30] (\x,{
0.5*sqrt(-3*(\x-10.2))});
 \path[draw, line width = 1] (9.5, 0.725)  to (9,0.9) ;
 \draw[line width=1] plot[domain=9.5:10.2,samples=30] (\x,{
-0.5*sqrt(-3*(\x-10.2))});
 \path[draw, line width = 1] (9.5,-0.725)  to (9,-0.9) ;
\path[draw, ->] (9.5,1.5) node {\small $x_R$} (9.5,1.35) to (10.75, 0.1); 

  \end{tikzpicture}
 \caption{The shaded region (including the boundary) illustrates the
$L^p$-spectrum of the Dirac operator on $\Mc^{m,k}=\mH_c^{k+1}\times N^n$, cf.
Theorem~\ref{main_inv}.\\ 
Left: $\lambda_0=0$ $\left(x_L=
ck\left|\frac{1}{p}-\frac{1}{2}\right|\right)$.\\
 Middle: $0<\lambda_0<ck\left|\frac{1}{p}-\frac{1}{2}\right|$ $\left(x_M=\left(
c^2k^2\left(\frac{1}{p}-\frac{1}{2}\right)^2-\lambda_0^2\right)^{\frac{1}{2}}
\right)$. \\
 Right: $\lambda_0>ck\left|\frac{1}{p}-\frac{1}{2}\right|$ $\left(x_R=\left(
\lambda_0^2-c^2k^2\left(\frac{1}{p}-\frac{1}{2}\right)^2\right)^{\frac{1}{2}}
\right)$.
}
\label{fig_main_thm}
\end{figure}

The paper is structured as follows: Notations and preliminaries are collected in
Section~\ref{sec.prelim}. Results on the Green function of the Dirac operator
acting on $L^2$-spinors can be found in Section~\ref{sec.green}. General remarks and
results for the Dirac operator acting on $L^p$-sections are given in
Appendix~\ref{sec_Lp}.

In Section~\ref{sec.D.hyp}, the Dirac operator on the model spaces $\Mc^{m,k}$
is written in polar coordinates and the action of $\Spin(k+1)$ on $\Mc^{m,k}$ is
studied. This is used in Section~\ref{sec_symm} to prove a certain symmetry
property of the Green function on $\Mc^{m,k}$ and in Section~\ref{sec.decay} to
study its decay.

After all these preparations we are ready to prove the main theorem:\\

{\it Structure of the proof of Theorem~\ref{main_inv}}

Section~\ref{sec_Green}: We decompose the Green function into a singular part
and a smoothing operator. Using the homogeneity of the hyperbolic space we show
in Proposition~\ref{Green_sing} that the singular part gives rise to a bounded
operator from $L^p$ to itself for all $p\in[1,\infty]$. In
Proposition~\ref{Green_smooth} we show that under certain assumptions on the
decay of the Green function also the smoothing part gives rise to a bounded
operator from $L^p$ to $L^p$ for certain $p$.

Section~\ref{sec.first}: Using the decay estimate obtained in
Section~\ref{sec.decay} we then see that the $L^p$-spectrum of $\Mc^{m,k}$ is
contained in the set $\sigma_p$ given in Theorem~\ref{main_inv}.

Thus, it only remains to show that each element of $\sigma_p$ is already in the
$L^p$-spectrum of $\Mc^{m,k}$. For that we construct test spinors on
$\mH_c^{k+1}$ in Section~\ref{sec.const} and finish the proof for product spaces
in Section~\ref{sec.last}.

%
%

\section{Preliminaries}\label{sec.prelim}

\subsection{Notations and conventions} In the article we will use the convention
that a spin manifold is a manifold which admits a spin structure together with a
fixed choice of spin structure.

Let $(M,g)$ be a spin manifold and $\Si_M$ the corresponding spinor bundle, see
Section~\ref{spin_prelim}.\\

$\Gamma (\Si_M)$ denotes the space of spinors, i.e., sections of $\Si_M$. The
space of smooth compactly supported sections is denoted by $C_c^\infty(M,
\Si_M)$, or shortly $C_c^\infty(\Si_M)$. The hermitian metric on fibers of $\Si_M$
is denoted by $\<.,.\>$, the corresponding norm by $|.|$. For $s_1,s_2\in
\Gamma(M, \Si_M)$ we define the $L^2$-scalar product
  \[(s_1,s_2)_{L^2(g)}:= \int_M \<s_1,s_2\> \,\vo_g.\]

For $s\in [1,\infty]$ $\Vert . \Vert_{L^s(g)}$ is the $L^s$-norm on $(M^n,g)$.
In case the underlying metric is clear from the context we abbreviate shortly by
$\Vert. \Vert_s$.

$\Spec_{L^s}^M(D)$ denotes the spectrum of the Dirac operator on $M$ viewed as
an operator from $L^s$ to $L^s$, cf. Appendix~\ref{sec_Lp}. 

 We denote by $\pi_i\colon  M\times M\to M$, $i=1,2$, the
projection to the $i$-th component. Moreover, we set $\Si_M  \boxtimes
\Si_M^*:=\pi_1^*(\Si_M)\otimes (\pi_2^*(\Si_M^*))$.


$C^i(M)$ denotes the space of $i$-times continuously differentiable functions on
$M$.

 $B_\epsilon(x)\subset M$ is the ball around $x\in M$ of radius $\epsilon$
w.r.t. the metric given on $M$.
 
A Riemannian manifold is of bounded geometry, if its injectivity
radius is positive and the curvature tensor and all derivatives are bounded.

The metric on the $k$-dimensional sphere $\mS^k$ with constant sectional
curvature $1$ will be denoted by $\sigma^k$. For $\mS^k$ with metric
$r^2\sigma^k$ we write $\mS^k_r$.

\subsection{Coordinates and notations for $\mH_c^{k+1}$ and its product
spaces.}\label{sec_H}

We introduce coordinates on $\mH_c^{k+1}$ by  equipping $\mR^{k+1}$
with the metric $g_{\mH_c^{k+1}}=\d r^2 +f(r)^2\sigma^k $ where $\sigma^k$ is
the standard metric on $\mS^k$ and
\[ f(r):= \sinh_c(r):= \left\{ \begin{matrix} \frac{1}{c}\sinh(cr) &\quad
\mathrm{if\ }c\ne 0\ \\
r&\quad \mathrm{if\ }c=0. \end{matrix} \right. \]
In particular, the distance $\dist_{\mH_c^{k+1}}$ of $y$ to $0$ w.r.t.
$g_{\mH_c^{k+1}}$ coincides with the euclidean one on $\mR^{k+1}$. The subset
$\{ y\in \mH_c^{k+1}\ |\ \dist_{\mH_c^{k+1}}(y,0)=r\}$ is isometric to
$\mS_{f(r)}^k$ and its (unnormalized) mean curvature is given by

\begin{equation*}
\Hvec_{\mS^k_{f(r)}}= -k \frac{\partial_r
f(r)}{f(r)}\pa_r=-k\coth_c(r)\partial_r\quad \text{where\ } \coth_c{r}:=\left\{ 
\begin{matrix} c\coth(cr)  &\quad\mathrm{if\ }c\ne 0\ \\
\frac{1}{r}&\quad\mathrm{if\ }c=0. 
\end{matrix} \right. 
\end{equation*}

The identity induces a map $\mR^{k+1}\to \mH_c^{k+1}$. Unless otherwise stated
we use this map to identify $\mH_c^{k+1}$ with $\mR^{k+1}$ as a manifold.

Let $N$ be a closed Riemannian spin manifold. Note that we include the case
where $N$ is just a point. Set $\Mc^{m,k}:= \mH_c^{k+1}\times N$, and
$\pi_{\mH}$ shall denote the projection of $\Mc^{m,k}$ onto its
$\mH_c^{k+1}$-coordinates.

\subsection{General preliminaries about spin geometry}\label{spin_prelim}
The following can e.g. be found in \cite{friedrich:00}.
A spin structure on $M^m$ is a pair $(P_\Spin(M),\alpha)$ where $P_\Spin(M)$ is
a principal $\Spin(m)$-bundle and where $\alpha\colon P_\Spin(M)\to P_\SO(M)$ is a
fiber map over the identity of $M$ that is compatible with the double covering
$\Theta\colon  \Spin(m)\to \SO(m)$ and the corresponding group actions, i.e.,  the
following diagram commutes

\begin{equation*}
\xymatrix{
\Spin(m)\times P_\Spin (M)\ar[dd]^{\Theta\times \alpha} \ar[r] & P_\Spin
(M)\ar[dd]^{\alpha} \ar[dr] & \\
&&M\\
\SO(m)\times P_\SO (M) \ar[r] &P_\SO (M)\ar[ur]&
}
\end{equation*}

Let $\Si_m$ be an irreducible representation of ${\rm Cl}_m$. In case $m$ is odd there are two such irreducible representations. Both of them coincide if considered as $\Spin(m)$-representations. If $m$ is even, there is only one irreducible ${\rm Cl}_m$-representation of $\Si_m$, but it splits into non-equivalent subrepresentations $\Si_m^{(+)}$ and $\Si_m^{(-)}$ as $\Spin(m)$-representations.

Let $\ep\in \{+, -\}$. We use the notation $\Si_{m}^{(\ep)}$ if $m$ is odd as well and set in this case $\Si_{m}^{(\ep)}=\Si_m$.

The spinor bundle $\Sigma_M$ is defined as $\Sigma_M=P_\Spin
(M)\times_{\rho_m} \Sigma_m$ where $\rho_m\colon  \Spin(m)\to {\rm End}(\Sigma_m)$ is
the complex spinor representation. Moreover, the spinor bundle is endowed with a
Clifford multiplication, denoted by '$\cdot$', $\cdot\colon  TM\to {\rm End}(\Si_M)$. 
Then, the Dirac operator acting on the space of smooth sections of $\Si_M$ is
defined as the composition of the connection $\nabla$  on $\Si_M$ (obtained as a
lift of the Levi-Civita connection on $TM$) and the Clifford multiplication.
Thus, in local coordinates this reads as
\[ D=\sum_{i=1}^m e_i\cdot \nabla_{e_i}\]
where $(e_i)_{i=1,\ldots,m}$ is a local orthonormal basis of $TM$. The Dirac
operator is  formally self-adjoint as an operator on $L^2$, i.e., for $\psi\in C^\infty(M, \Si_M)$ and $\phi\in C_c^\infty(M, \Si_M)$
we have $(\phi, D\psi)=(D\phi, \psi)$.

As $M$ is complete, the Dirac operator is not only formally self-adjoint, but actually has a self-adjoint extension that is a densely defined operator $D\colon  L^2\to L^2$, see \cite{Wolf}. From the spectral theorem it then follows that $D-\mu\colon  L^2\to L^2$ is invertible for
all $\mu\not\in \mR$. 

Define $\omega_M=\i^{[\frac{m+1}{2}]} e_1\cdot e_2\cdot \ldots \cdot e_m$ with $(e_i)_i$ being a positively oriented orthonormal frame on $M$.
If $m$ is even, $\omega_M^2=1$ 
and the corresponding $\pm 1$ eigenspaces are the spaces of so-called positive
(resp. negative) spinors.

\subsection{Dual spinors}

The hermitian metric induces a natural isomorphism from $\Si_M^*$ to $\bar{\Si}_M$. In this way we obtain a metric connection and a Clifford multiplication on $\Si_M^*$ and this allows us to define a Dirac operator $D^{\rm t}\colon  C^\infty(\Si_M^*)\to  C^\infty(\Si_M^*)$. Locally $D^{\rm t}f=\sum_i e_i\cdot \nabla_{e_i} f$ where $f\in C^\infty(\Si_M^*)$ and $e_i$ is a local orthonormal frame on $M$. Completely analogously to the proof that the usual Dirac operator is formally self-adjoint, one proves that for $f\in C^\infty(\Si_M^*)$, $\phi\in C^\infty(\Si_M)$ such that $\supp\, f\cap \supp\, \phi$ is relatively compact we have
\[ \int D^{\rm t}f(\phi)\vo_g=\int f(D\phi)\vo_g.\]

\subsection{ Spinors on product manifolds}\label{spin_prod} In this subsection our notation is close to \cite{baer_98}.
Let
$(P^{m+n}=M^m\times N^n$,
$g_P=g_M +g_N)$ be a product of Riemannian spin manifolds $(M,g_M)$ and
$(N,g_N)$.
We have 
  $$P_\Spin(M\times N)=(P_\Spin(M) \times P_\Spin( N))\times_\xi \Si_{m+n}$$
where $\xi\colon\Spin(m)\times \Spin(n)\to \Spin(m+n)$ is the Lie group
homomorphism lifting the 
standard embedding $\SO(m)\times \SO(n)\to \SO(m+n)$. Note that~$\xi$ is not an
embedding, its kernel is
$(-1,-1)$, where~$-1$ denotes the non-trivial element in the kernel of
$\Spin(m)\to \SO(m)$ resp. $\Spin(n)\to \SO(n)$.

The spinor bundle can be identified with

\[ \Si_P= \left\{\begin{matrix} \Si_M \otimes (\Si_N\oplus \Si_N) & \text{if both $m$ and $n$ are odd}\\ \Si_M \otimes \Si_N & \text{else},
 \end{matrix} \right.\]

and the Levi-Civita connection  acts as $\nabla^{\Si_M\otimes \Si_N}=
\nabla^{\Si_M}\otimes \Id_{\Si_N} + \Id_{\Si_M}\otimes \nabla^{\Si_N}$. This identification can be chosen such that for
$X\in TM$, $Y\in TN$, $\phi\in \Gamma(\Si_M)$, and $\psi=(\psi_1, \psi_2)\in \Si_N\oplus\Si_N$ for both $n$ and $m$  odd and $\psi\in \Gamma(\Si_N)$ otherwise, we have

\begin{equation*}
 (X,Y)\cdot_P (\phi\otimes \psi)= (X\cdot_M
\phi) \otimes (\omega_N\cdot_N \psi) + \phi\otimes (Y\cdot_N \psi)\end{equation*}
where for both $n$ and $m$ odd we set  $\omega_N\cdot_N (\psi_1, \psi_2):= \i (\psi_2, -\psi_1)$ and  $Y\cdot_N (\psi_1, \psi_2):= (Y\cdot_N \psi_2, Y\cdot_N \psi_1)$.

The Dirac operator is then given by
\begin{equation*}
 D^P(\phi\otimes \psi)= (D^M \phi \otimes
\omega_N\cdot_N \psi) + (\phi\otimes \tilde{D}^N \psi)\end{equation*}
where $\tilde{D}^N={\rm diag} (D^N, -D^N)$ if both $m$ and $n$ are odd and
$\tilde{D}^N=D^N$ otherwise. 

Since $\omega_N\cdot$ and $\tilde{D}^N$ anticommute, $D^M\otimes \om_N$ and
$\id\otimes \tilde{D}^N$ anticommute as well. Thus 
  \begin{align}\label{Dirac_square_prod}  (D^P)^2= (D^M)^2\otimes \id + \id
\otimes (\tilde{D}^N)^2.
   \end{align}

\subsection{A covering lemma.}

\begin{lemma}[Covering lemma]\label{cover} Let $(M,g)$ be a Riemannian manifold
of bounded geometry, and let $R>0$. Then there are points $(x_i)_{i\in I}\subset
M$ where $I$ is a countable index set such that 

\begin{itemize}
 \item[(i)] the balls $B_R(x_i)$ are pairwise disjoint and
 \item[(ii)] $(B_{2R}(x_i))_{i\in I}$ and $(B_{3R}(x_i))_{i\in I}$ are both
uniformly locally finite covers of $M$.
\end{itemize}
\end{lemma}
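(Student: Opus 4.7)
The plan is the classical greedy/maximality argument, combined with uniform volume comparison coming from bounded geometry.

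First I would construct the point set. Consider the collection $\mathcal{S}$ of subsets $S\subset M$ such that $\{B_R(x)\}_{x\in S}$ is pairwise disjoint, partially ordered by inclusion. Every chain has an upper bound (its union), so by Zorn's lemma there exists a maximal element $\{x_i\}_{i\in I}\subset M$. By bounded geometry the volume of $B_R(x_i)$ is bounded below by a uniform constant $v_R>0$, so on any ball $B_L(p)$ the family contains at most finitely many points; in particular $I$ is at most countable. This gives (i).

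For the covering property in (ii), fix $y\in M$. If $y\notin\bigcup_i B_{2R}(x_i)$, then $\dist(y,x_i)\geq 2R$ for every $i$, so $B_R(y)$ is disjoint from every $B_R(x_i)$, contradicting maximality. Hence $\{B_{2R}(x_i)\}_{i\in I}$ covers $M$, and a fortiori so does $\{B_{3R}(x_i)\}$.

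For uniform local finiteness, I would argue as follows. Suppose $y\in B_{3R}(x_i)\cap B_{3R}(x_j)$ for some $i$; then $\dist(x_i,x_j)<6R$, hence the disjoint ball $B_R(x_j)$ lies inside $B_{7R}(x_i)$. Bounded geometry provides a uniform upper bound $V_{7R}$ for $\vol\bigl(B_{7R}(x_i)\bigr)$ (standard Bishop--Gromov-type comparison using bounded sectional curvature) and a uniform lower bound $v_R$ for $\vol\bigl(B_R(x_j)\bigr)$ (standard lower volume estimate from bounded curvature together with positive injectivity radius). Summing the disjoint volumes yields
\[
\#\{\,j\in I\mid B_{3R}(x_j)\cap B_{3R}(x_i)\neq\emptyset\,\}\;\leq\;\frac{V_{7R}}{v_R},
\]
a bound independent of $i$. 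The same estimate, with $7R$ replaced by $5R$, controls the $2R$-cover. This proves (ii).

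The only non-routine ingredient is citing the right volume comparison: bounded sectional curvature together with a positive injectivity radius (both encoded in ``bounded geometry'') gives the needed two-sided uniform bound on the volumes of balls of any fixed radius, and everything else is formal. I do not foresee a serious obstacle beyond keeping the constants organized.
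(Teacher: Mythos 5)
Your proof is correct and follows essentially the same strategy as the paper's: a maximal pairwise-disjoint family of $R$-balls gives (i) and the $2R$-covering automatically, and uniform local finiteness follows from packing disjoint $R$-balls inside a fixed comparison ball and invoking two-sided volume bounds from bounded geometry. The only cosmetic difference is where the comparison ball is centered: you pack the $B_R(x_j)$ into $B_{7R}(x_i)$ for a fixed index $i$, while the paper packs them into $B_{4R}(y)$ centered at an arbitrary point $y$, yielding a slightly smaller radius; both are valid and give the required uniform bound.
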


\begin{proof}
Choose a maximal family of points $(x_i)_{i\in I}$ in  $M$ such that 
the sets
$B_{R}(x_i)$ are pairwise disjoint. Then  $\bigcup_{i\in I} B_{2R}(x_i)=M$.  
For $y\in M$ let $L(y)= \{ i\in I \ |\ y\in 
B_{3R}(x_i)\}$. For $i\in L(y)$ we have
$B_{R}(x_i)\subset B_{4R}(y)$ and, thus, 
\[\bigsqcup_{i\in L(y)} B_{R}(x_i)\subset  B_{4R}(y),\]
where $\sqcup$ denotes disjoint union.
Comparing the volumes of both sides and using the bounded geometry of $M$ we see
that
there exists a number $L_R$ such that $|L(y)|\leq L_R$ for all $y\in M$. Thus,
the
covering by sets $B_{3R}(x_i)$, and hence the one by $B_{2R}(x_i)$, is uniformly
locally finite. 
\end{proof}

 \subsection{ Interpolation theorems}

\begin{thm}[Riesz-Thorin Interpolation Theorem, { \cite[Theorem II.4.2]{werner_00}}] \label{RTint}
Let $T$ be an operator defined on a domain $\mathcal{D}$ that is dense in both
$L^q$ and $L^{p}$. Assume that $Tf\in L^q\cap L^p$ for all $f\in \mathcal{D}$
and that $T$ is bounded in both norms. Then, for any $r$ between $p$ and $q$ the
operator $T$ is a bounded operator from $L^r$ to $L^r$.
\end{thm}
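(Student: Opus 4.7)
The plan is to use the classical complex-interpolation argument based on Hadamard's three-lines lemma; this in fact yields the sharper conclusion $\|Tf\|_r\le M_p^{1-\theta}M_q^{\theta}\|f\|_r$, where $M_p,M_q$ denote the operator norms of $T$ on $L^p$ and $L^q$, and $\theta\in[0,1]$ is determined by $1/r=(1-\theta)/p+\theta/q$. Without loss of generality assume $p\le r\le q$.

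First I would reduce to simple functions. Since $\mathcal{D}$ is dense in $L^r$ and simple functions are dense in every $L^s$, by duality it suffices to bound $|(Tf,g)|$ for simple $f$ with $\|f\|_r=1$ and simple $g$ with $\|g\|_{r'}=1$, where $1/r+1/r'=1$.

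Next I would embed $f$ and $g$ into holomorphic families. Writing $f=\sum_j a_j e^{\i\alpha_j}\chi_{A_j}$ and $g=\sum_k b_k e^{\i\beta_k}\chi_{B_k}$ with $a_j,b_k>0$ and disjoint sets $A_j$, $B_k$, set
\[f_z:=\sum_j a_j^{r((1-z)/p+z/q)} e^{\i\alpha_j}\chi_{A_j},\qquad g_z:=\sum_k b_k^{r'((1-z)/p'+z/q')} e^{\i\beta_k}\chi_{B_k},\]
so that $f_\theta=f$, $g_\theta=g$, while on $\Re z=0$ one has $\|f_z\|_p=\|g_z\|_{p'}=1$ and on $\Re z=1$ one has $\|f_z\|_q=\|g_z\|_{q'}=1$. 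Define $F(z):=(Tf_z,g_z)$. As $f_z,g_z$ are finite sums whose coefficients depend holomorphically on $z$, $F$ is entire and bounded on the closed strip $\{0\le\Re z\le 1\}$.

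Finally I would apply the three-lines lemma. On the boundary lines, Hölder and the hypothesis yield $|F(z)|\le M_p\|f_z\|_p\|g_z\|_{p'}\le M_p$ on $\Re z=0$ and $|F(z)|\le M_q$ on $\Re z=1$. Hadamard then gives $|F(\theta)|\le M_p^{1-\theta}M_q^{\theta}$, and since $F(\theta)=(Tf,g)$, taking the supremum over admissible $g$ produces $\|Tf\|_r\le M_p^{1-\theta}M_q^{\theta}$. The main technical point is to verify that $|a_j^{r((1-z)/p+z/q)}|$ depends only on $\Re z$, so that $\|f_z\|_p$ and $\|f_z\|_q$ are genuinely constant on the respective boundary lines; this is exactly what ensures the boundary estimates and makes the three-lines argument applicable.
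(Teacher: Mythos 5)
The paper does not prove this statement: it is a classical result cited directly from Werner's textbook \cite[Theorem~II.4.2]{werner_00}, so there is no in-paper argument to compare against. Your sketch is the standard complex-interpolation proof via Hadamard's three-lines lemma, and it is essentially correct; the key computation you flag --- that $|a_j^{r((1-z)/p+z/q)}|$ and $|b_k^{r'((1-z)/p'+z/q')}|$ depend only on $\Re z$, so that $\|f_z\|_p=\|g_z\|_{p'}=1$ on $\Re z=0$ and $\|f_z\|_q=\|g_z\|_{q'}=1$ on $\Re z=1$ --- is exactly right. One caveat: for $F(z)=(Tf_z,g_z)$ to be holomorphic, the pairing must be the \emph{bilinear} one $\int (Tf_z)\,g_z$ rather than the sesquilinear $L^2$-inner product, since the latter would bring in $\overline{g_z}$ and hence an anti-holomorphic dependence on $z$, invalidating the three-lines lemma; alternatively, conjugate the exponents in the definition of $g_z$. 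With the bilinear pairing (and the usual duality characterization of $\|Tf\|_r$), the boundary estimates via H\"older plus the hypothesis give $|F(\i y)|\le M_p$ and $|F(1+\i y)|\le M_q$, and Hadamard yields the conclusion. The remaining small gaps (that the $L^p$- and $L^q$-extensions of $T$ agree where both are defined, the reduction from $\mathcal{D}$ to simple functions, and the endpoint exponents $p,q\in\{1,\infty\}$) are the routine technicalities one expects to elide in a sketch.
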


\begin{thm}[Stein Interpolation Theorem, {\cite[Section 1.1.6]{davies_90},
\cite[Theorem IX.21]{RS2}}]\label{stein_int}
 Let $p_0,q_0,p_1,q_1\in[1,\infty]$, $0<t<1$, and $S=\{z\in \C\ |\ 0\leq \Re
z\leq 1\}$.
Let $A_z$ be linear operators from $L^{p_0}\cap L^{p_1}$ to $L^{q_0}+L^{q_1}$
for all $z\in S$ with the following properties
\begin{itemize}
 \item[(i)] $z\mapsto \< A_zf, g\>$ is uniformly bounded and continuous on $S$ and
analytic in the interior of $S$ whenever $f\in L^{p_0}\cap L^{p_1}$ and  $g\in
L^{r_0}\cap L^{r_1}$ where $r_i$ is the conjugate exponent to $q_i$.
\item[(ii)] There is $M_0>0$ such that $\Vert A_{\i y}f\Vert_{q_0}\leq M_0\Vert
f\Vert_{p_0}$ for all $f\in L^{p_0}\cap L^{p_1}$ and $y\in \R$.
 \item[(iii)] There is $M_1>0$ such that $\Vert A_{1+\i y}f\Vert_{q_1}\leq
M_1\Vert f\Vert_{p_1}$ for all $f\in L^{p_0}\cap L^{p_1}$ and $y\in \R$.
\end{itemize}
Then, for $1/p=t/p_1+(1-t)/p_0$ and  $1/q=t/q_1+(1-t)/q_0$
\[ \Vert A_tf\Vert_q\leq M_1^tM_0^{1-t}\Vert f\Vert_p\]
for all $f\in L^{p_0}\cap L^{p_1}$. Hence, $A_t$ can be extended to a bounded
operator from $L^p$ to $L^q$ with norm at most $M_1^tM_0^{1-t}$.
\end{thm}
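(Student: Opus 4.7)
The plan is to execute the standard Thorin--Stein complex-interpolation argument whose backbone is Hadamard's three lines lemma: a bounded and continuous function $F$ on $\bar S$ which is holomorphic in the interior and satisfies $|F(\i y)|\le M_0$ and $|F(1+\i y)|\le M_1$ also satisfies $|F(t+\i y)|\le M_0^{1-t}M_1^{t}$ for every $0<t<1$. My task is to build, for each admissible pair of test functions $(f,g)$, an $F$ with $F(t)=\langle A_tf,g\rangle$ to which this principle applies, and then to dualise.

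First, let $r$ and $r_0,r_1$ denote the conjugate exponents to $q$ and $q_0,q_1$. By $L^p$--$L^q$ duality and the density of simple functions in both $L^p$ and $L^r$, it suffices to show the bound $|\langle A_tf,g\rangle|\le M_0^{1-t}M_1^{t}$ for simple $f=\sum_{k} a_k\1_{E_k}$ (on disjoint $E_k$ of finite measure) normalised by $\|f\|_p=1$ and simple $g=\sum_{j} b_j\1_{F_j}$ with $\|g\|_r=1$. Setting
\[ \alpha(z):=\tfrac{1-z}{p_0}+\tfrac{z}{p_1},\qquad \beta(z):=\tfrac{1-z}{r_0}+\tfrac{z}{r_1}, \]
deform $f,g$ to the entire families
\[ f_z:=\sum_k \tfrac{a_k}{|a_k|}\,|a_k|^{p\alpha(z)}\1_{E_k},\qquad g_z:=\sum_j \tfrac{b_j}{|b_j|}\,|b_j|^{r\beta(z)}\1_{F_j}. \]
A direct check gives $f_t=f$, $g_t=g$ and $\|f_{\i y}\|_{p_0}=\|f_{1+\i y}\|_{p_1}=\|g_{\i y}\|_{r_0}=\|g_{1+\i y}\|_{r_1}=1$, since e.g.\ $|f_{\i y}|=|f|^{p/p_0}$ pointwise.

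The decisive object is $F(z):=\langle A_zf_z,g_z\rangle$. Bilinearity expands it into the finite sum
\[ F(z)=\sum_{k,j}\tfrac{a_k}{|a_k|}\tfrac{b_j}{|b_j|}\,|a_k|^{p\alpha(z)}|b_j|^{r\beta(z)}\,\langle A_z\1_{E_k},\1_{F_j}\rangle. \]
Each scalar factor $|a_k|^{p\alpha(z)}|b_j|^{r\beta(z)}$ is entire and, since $\Re(p\alpha(z))$ and $\Re(r\beta(z))$ are bounded on $\bar S$, uniformly bounded on the strip; each pairing $\langle A_z\1_{E_k},\1_{F_j}\rangle$ is bounded and continuous on $\bar S$ and analytic in the interior by hypothesis (i). Hence $F$ inherits the same regularity. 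On the boundary $\Re z=0$, H\"older and hypothesis (ii) give $|F(\i y)|\le\|A_{\i y}f_{\i y}\|_{q_0}\|g_{\i y}\|_{r_0}\le M_0$; analogously $|F(1+\i y)|\le M_1$ from (iii). Hadamard's three lines lemma then yields $|\langle A_tf,g\rangle|=|F(t)|\le M_0^{1-t}M_1^{t}$, and taking the supremum over $g$ followed by density of simple functions gives the claim.

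The main obstacle is verifying that $F$ is uniformly bounded on all of $\bar S$, not merely on the two boundary lines, and that it is genuinely holomorphic in the interior: hypothesis (i) assumes this only for \emph{fixed} $f$ and $g$, whereas in $F$ the variable $z$ appears simultaneously in the operator and in both test functions. The reduction to simple $f,g$ is precisely what defuses the difficulty, as it converts the moving pairing into a finite linear combination of entire scalar functions multiplying fixed pairings to which (i) applies directly.
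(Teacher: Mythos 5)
The paper does not prove this theorem; it is imported as a black box from Davies and Reed--Simon, so there is no internal proof to compare against. Your argument is the standard three-lines-lemma proof of the Stein interpolation theorem, and its structure is correct: reduce to simple $f$ and $g$, deform them analytically across the strip, form $F(z)=\langle A_zf_z,g_z\rangle$, check that the reduction to simple functions makes $F$ a finite linear combination of products of bounded entire scalar factors with the pairings $\langle A_z\1_{E_k},\1_{F_j}\rangle$ supplied by hypothesis~(i), read the boundary estimates from (ii), (iii) via H\"older, and invoke Hadamard. You also correctly identify and defuse the main pitfall, namely that (i) gives holomorphy only for fixed test functions.

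One detail needs repair. The pairing $\langle\cdot,\cdot\rangle$ in this paper is the Hermitian $L^2$-pairing (conjugate-linear in the second slot), and that is how it is used when the theorem is applied in Proposition~\ref{subset_spec_Hprod}. With your $g_z$, the expansion of $F(z)$ produces the factor $\overline{|b_j|^{r\beta(z)}}=|b_j|^{r\beta(\bar z)}$, which is anti-holomorphic, so $F$ as written is not holomorphic in the interior of the strip. The fix is cosmetic: deform $g$ with the conjugated parameter, $g_z:=\sum_j\frac{b_j}{|b_j|}\,|b_j|^{r\beta(\bar z)}\1_{F_j}$, so that $\overline{g_z}$ depends holomorphically on $z$; then $g_t=g$ (since $\beta$ has real coefficients) and $\|g_{\i y}\|_{r_0}=\|g_{1+\i y}\|_{r_1}=1$ still hold because only $\Re\beta$ enters. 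A further, standard, caveat: your duality and density reductions tacitly assume $p,q<\infty$; the endpoint cases $q=\infty$ (where $(L^q)^*\neq L^1$) and $p=\infty$ (where simple functions are not $L^\infty$-dense) require the usual extra word, though the conclusion as stated --- a bound for $f\in L^{p_0}\cap L^{p_1}$ --- is tailored so this does not cause trouble. With these adjustments your proof is correct and is precisely the one in the cited references.
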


\section{The Green function}\label{sec.green}

In this section, we collect results on existence and properties of the Green
function of the Dirac operator $D$ and its shifts $D-\mu$, $\mu\in \mC$. They are obvious applications of standard
methods, but a suitable reference does not exist yet. Unless
otherwise stated we only assume in this section that the Riemannian spin
manifold $(M,g)$ is complete.

\begin{definition}\cite[Definition~2.1]{AHM} A smooth section $G_{D-\mu}\colon M\times
M \setminus \Delta\to \Si_M\boxtimes \Si_M^*$  that is locally integrable on
$M\times M$
is called a \emph{Green function} of the shifted Dirac operator $D-\mu$    
if 
\begin{equation}(D_x-\mu)(G_{D-\mu}(x,y)) = \delta_y
\Id_{\Si_M|_y}\label{def.green}\end{equation}
in the sense of distributions, i.e.,        
for any $y\in M$, $\psi_0\in \Si_M|_y $, and     
$\phi\in C_c^\infty(\Si_M)$
\[\int_M \<G_{D-\mu}(x,y)\psi_0,(D-\bar{\mu})\phi(x)\>\d x= \<\psi_0,\phi(y)
\>\]
and $G_{D-\mu}(., y)\in L^2(M\setminus B_r(y))$ for any $r>0$.
\end{definition}

In case that the operator $D-\mu$ is clear from the context, we shortly write
$G=G_{D-\mu}$.

\begin{prop}\label{Green-compact}If $M$ is a closed Riemannian spin manifold
with invertible  operator $D-\mu\colon L^2(\Si_M)\to L^2(\Si_M)$, then a unique Green
function exists.
\end{prop}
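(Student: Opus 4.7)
The plan is to construct $G$ as the Schwartz kernel of the bounded inverse $R:=(D-\mu)^{-1}\colon L^2(\Si_M)\to L^2(\Si_M)$ and then verify its regularity and the distributional equation using the ellipticity of $D-\mu$. Since $M$ is closed, the Schwartz kernel theorem produces a distribution $G\in\mathcal{D}'(M\times M,\Si_M\boxtimes\Si_M^*)$ characterized by
\[
  (\psi, R\phi)_{L^2} \;=\; \int_{M\times M}\langle \psi(x), G(x,y)\phi(y)\rangle\,\vo(x)\,\vo(y)
\]
for all $\phi,\psi\in C^\infty(\Si_M)$. Because $D-\mu$ is elliptic of order $1$ on the closed manifold $M$, its inverse $R$ is a classical pseudodifferential operator of order $-1$. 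The standard calculus then yields that $G$ is smooth on $M\times M\setminus\Delta$, and that near the diagonal one has $|G(x,y)|\lesssim \dist(x,y)^{1-m}$ in local coordinates (with a logarithmic replacement if $m=1$). Since $\int_0^\epsilon r^{1-m} r^{m-1}\,\d r<\infty$, this gives $G\in L^1_{\mathrm{loc}}(M\times M)$, and the condition $G(\cdot,y)\in L^2(M\setminus B_r(y))$ is automatic since $M$ is compact and $G$ is smooth there.

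Next I would verify the distributional equation. Given $\phi\in C_c^\infty(\Si_M)$, $y\in M$ and $\psi_0\in\Si_M|_y$, set $\eta:=(D-\bar\mu)\phi\in C_c^\infty(\Si_M)$. The formal $L^2$-adjoint of $R$ is $R^*=(D-\bar\mu)^{-1}$, whose Schwartz kernel is obtained from $G$ by swapping arguments and taking the fiberwise adjoint. Using Fubini (justified by local integrability of $G$ and compact support of $\phi$) one computes
\[
  \int_M \langle G(x,y)\psi_0,(D-\bar\mu)\phi(x)\rangle\,\d x \;=\; \langle \psi_0,(R^*\eta)(y)\rangle \;=\; \langle \psi_0,\phi(y)\rangle,
\]
where $R^*\eta=R^*(D-\bar\mu)\phi=\phi$ is a smooth section by elliptic regularity, so the pointwise evaluation at $y$ makes sense. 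This is precisely the defining identity.

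For uniqueness, if $G_1,G_2$ are two Green functions, then $H:=G_1-G_2$ is locally integrable on $M\times M$ and satisfies $(D_x-\mu)H(\cdot,y)=0$ in the distributional sense on all of $M$ (the two $\delta_y$ contributions cancel). Elliptic regularity promotes $H(\cdot,y)$ to a smooth section, which on the compact $M$ lies in $L^2$ and hence in the kernel of the invertible operator $D-\mu\colon L^2\to L^2$; thus $H(\cdot,y)\equiv 0$. The main obstacle is essentially bookkeeping: matching the distributional convention in the definition with the Schwartz-kernel formulation and tracking the fiberwise adjoint. The only genuinely technical point is the asymptotic behavior of the kernel of a pseudodifferential operator of order $-1$ near the diagonal, which simultaneously gives smoothness off $\Delta$ and local integrability, and is a classical consequence of the symbol calculus.
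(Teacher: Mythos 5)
Your proof is correct, and it takes a genuinely different (though closely related) route from the paper's. The paper constructs the Green function in two steps: it invokes a parametrix $P_{D-\mu}$ with $(D_x-\mu)P_{D-\mu}(x,y)=\delta_y\Id+R(x,y)$, $R$ smooth, and then uses the $L^2$-invertibility of $D-\mu$ together with elliptic regularity to produce a smooth correction $P'_{D-\mu}$ solving $(D_x-\mu)P'_{D-\mu}=R$; the Green function is $G=P_{D-\mu}-P'_{D-\mu}$. You instead start directly from the bounded inverse $R=(D-\mu)^{-1}$, invoke the standard fact that the inverse of an invertible elliptic operator of order $1$ on a closed manifold is a classical $\Psi$DO of order $-1$, and read off the kernel properties (smoothness off the diagonal, $\dist(x,y)^{1-m}$ blow-up hence $L^1_{\mathrm{loc}}$) from the symbol calculus. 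The two routes are essentially equivalent under the hood -- the proof that $(D-\mu)^{-1}$ is a $\Psi$DO of order $-1$ itself goes through a two-sided parametrix and the Sobolev mapping properties of $R$ -- but the organization differs: the paper's version is more hands-on and explicitly isolates the smooth remainder, making it easier to reuse in the subsequent Proposition~\ref{prop.green.exists} for noncompact $M$ of bounded geometry (where one patches local parametrices); your version is cleaner and shorter for the compact case but packs the technical content into the black-box statement about $\Psi$DO inverses. Your verification of the defining distributional identity via $R^*=(D-\bar\mu)^{-1}$ and Fubini, and the uniqueness argument, match the paper's in substance.

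One small point worth making explicit: the claim that $R=(D-\mu)^{-1}$ is a classical $\Psi$DO of order $-1$ is not a direct consequence of $L^2$-boundedness alone; one needs that $R$ preserves the Sobolev scale (equivalently, that $R$ maps $\mathcal{D}'\to\mathcal{D}'$), which follows from elliptic regularity, so that the smoothing error in $R=Q'-S'R$ (with $Q'$ a left parametrix, $S'$ smoothing) is itself smoothing. You implicitly use this, and it is standard, but since it is precisely where the ellipticity of $D-\mu$ enters, stating it would tighten the argument.
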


To prove the well-known proposition, one usually starts by showing the existence of a 
parametrix. 

\begin{lemma}\cite[III.\S 4]{lawson.michelsohn:89} Let $M$ be a closed Riemannian spin manifold. Then there is a
smooth section
$P_{D-\mu}\colon M\times M \setminus \Delta\to \Si_M\boxtimes \Si_M^*$, called parametrix,  which is $L^1$
on 
$M\times M$ and which satisfies  
  \[(D_x-\mu)(P_{D-\mu}(x,y)) = \delta_y Id_{\Si_M|_y} + R(x,y)\]
  in the distributional sense  
for a smooth section $R\colon M\times M \to \Si_M\boxtimes \Si_M^*$.
\end{lemma}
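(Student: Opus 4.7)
The plan is to construct $P_{D-\mu}$ as the Schwartz kernel of a classical pseudodifferential parametrix for the elliptic first-order operator $D-\mu$. Since $\mu$ is a zeroth-order perturbation, $D-\mu$ has the same principal symbol as $D$; in a local trivialization this symbol is Clifford multiplication by $\i\xi$, whose square is $-|\xi|_g^2\,\Id$ and hence is invertible for $\xi\ne 0$. Thus $D-\mu$ is elliptic of order one and the standard pseudodifferential calculus on closed manifolds, as in \cite{lawson.michelsohn:89}, applies.

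First, I would work locally. Choose a finite atlas $\{U_\alpha\}$ on $M$ over which $\PSpin(M)$ trivializes, together with a subordinate partition of unity $\{\varphi_\alpha\}$ and cutoffs $\chi_\alpha\in C_c^\infty(U_\alpha)$ with $\chi_\alpha\equiv 1$ on $\supp\varphi_\alpha$. On each $U_\alpha$ I would construct a classical pseudodifferential operator $Q_\alpha$ of order $-1$ whose full symbol $q_\alpha\sim q_{-1}+q_{-2}+\cdots$ asymptotically inverts the symbol of $D-\mu$, starting from the principal piece $q_{-1}(x,\xi)=-\i\xi\cdot/|\xi|_g^2$ on $|\xi|\ge 1$ and vanishing near $\xi=0$; each successive term $q_{-1-j}$ is then determined algebraically from the lower-order symbols. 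Define $P_{D-\mu}$ as the Schwartz kernel of $\sum_\alpha \chi_\alpha\,Q_\alpha\,\varphi_\alpha$. By the standard analysis of oscillatory integrals, the Schwartz kernel of each $Q_\alpha$ is smooth off the diagonal with leading singularity proportional to $(x-y)\cdot/|x-y|^m$ in local coordinates, which is integrable near the diagonal in dimension $m$; together with the compactness of $M$ this yields $P_{D-\mu}\in L^1(M\times M)$, smooth off the diagonal.

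Second, I would verify smoothness of the error. Since $q_\alpha$ asymptotically inverts the symbol of $D-\mu$ at every order, the composition formula for pseudodifferential operators on the closed manifold $M$ shows that $(D-\mu)\circ\sum_\alpha \chi_\alpha\,Q_\alpha\,\varphi_\alpha - \Id$ has full symbol vanishing to infinite order; it is therefore a smoothing operator whose Schwartz kernel $R$ lies in $C^\infty(M\times M;\Si_M\boxtimes\Si_M^*)$. The distributional identity
\[ (D_x-\mu)P_{D-\mu}(x,y)=\delta_y\,\Id_{\Si_M|_y}+R(x,y) \]
then follows by unwinding the definitions of the Schwartz kernels.

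The main obstacle is the bookkeeping behind the asymptotic symbol inversion $q_\alpha\sim\sum_j q_{-1-j}$ together with the commutator contributions $[\chi_\alpha,D-\mu]\,Q_\alpha\,\varphi_\alpha$ and $\chi_\alpha\,Q_\alpha\,[\varphi_\alpha,D-\mu]$ arising when the local parametrices are pasted. Each such commutator lowers the differential order by one, so iterating the asymptotic expansion absorbs all of them into the smoothing remainder; this is the standard but tedious part of the argument and is written out in full in \cite{lawson.michelsohn:89}.
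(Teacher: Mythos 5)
Your proposal follows the same route as the paper's: the paper does not spell out a proof but cites Lawson--Michelsohn's III.\S 4 for exactly this symbol-calculus construction of a pseudodifferential parametrix, noting only that the equivalence of the various notions of infinitely smoothing operators used there is the one subtle point. Your sketch of the asymptotic symbol inversion, the $L^1$ bound from the $O(|x-y|^{-(m-1)})$ near-diagonal singularity of an order $-1$ kernel, and the smoothing remainder is precisely the content of the cited reference, so the argument is correct and not a different approach.
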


Convolution with $P_{D-\mu}$ thus defines an operator $\mathtt{P}_{D-\mu}$ by
\[(\mathtt{P}_{D-\mu}\psi,\phi)=\int_M\int_M \<P_{D-\mu}(x,y) \psi(y), \phi(x)\>
\d x\d y\]  for all $\psi,\phi\in C_c^\infty(\Si_M)$. Then, $\mathtt{P}_{D-\mu}$
is a right inverse
to ${D-\mu}$ up to infinitely smoothing operators. We thus call it a right
parametrix.
The existence of such a right parametrix follows using the symbol calculus
from the fact that $D$ is an elliptic operator. 
An efficient and very readable overview  over how to construct a right 
parametrix for an elliptic differential operator on a compact manifold
can be found e.g.\ in \cite[III.\S 4]{lawson.michelsohn:89}, although the 
reader should pay attention to the fact that it is not so obvious that
the different notions of infinitely smoothing operators used in there 
are in fact all equivalent. The latter fact follows from standard 
techniques used in the theory of pseudo differential operators, see e.g.\ 
\cite{abels:12} or \cite{taylor:81} for textbooks on this subject.

\begin{proof}[Proof of Proposition \ref{Green-compact}]
From the last Lemma we have the existence of a parametrix $P_{D-\mu}(x,y)$. We
will use the notations of that Lemma. Since $D-\mu$ is assumed to be invertible, there
is a section $P_{D-\mu}'\colon M\times M\to \Si_M\boxtimes \Si_M^*$ with $(D_x-\mu)
P'_{D-\mu}(x,y)=R(x,y)$. By elliptic regularity $P'_{D-\mu}$ is smooth in $x$
and $y$. We set $G_{D-\mu}(x,y)=P_{D-\mu}(x,y)-P'_{D-\mu}(x,y)$ and obtain
$(D_x-\mu)(G_{D-\mu}(x,y))=\delta_y \Id_{\Si_M|_y}$. Moreover, since $P_{D-\mu}$
is $L^1$ on $M\times M$ and $P_{D-\mu}'$ is smooth in both entries the Green
function $G_{D-\mu}$ is $L^1$ as well. 
Furthermore, $P_{D-\mu}(.,y)$ is smooth on $M\setminus B_r(y))$ for any $r>0$ and,
hence, the same is true for $G_{D-\mu}(.,y)$. In particular, $G_{D-\mu}(.,y)\in
L^2(M\setminus B_r(y))$. If $\tilde{G}_{D-\mu}$ is a possibly different Green function of $D-\mu$
then $(D-\mu)({G}_{D-\mu}(.,y)- \tilde{G}_{D-\mu}(.,y))=0$ for all $y\in M$. As $D-\mu$ is invertible
we have ${G}_{D-\mu}=\tilde{G}_{D-\mu}$.
\end{proof}

As for $P_{D-\mu}$, convolution with $G_{D-\mu}$ defines an operator
$\mathtt{G}_{D-\mu}$ by
\[(\mathtt{G}_{D-\mu}\psi,\phi)=\int_M\int_M \<G_{D-\mu}(x,y) \psi(y), \phi(x)\>
\d x\d y\]  for all $\psi,\phi\in C_c^\infty(\Si_M)$. By construction
$\mathtt{G}_{D-\mu}$ is the right inverse of $D-\mu$, and is thus even defined
on $L^2$. Since the inverse of $D-\mu$ exists by assumption,
$\mathtt{G}_{D-\mu}=(D-\mu)^{-1}$, and  $\mathtt{G}_{D-\mu}$ is in particular
also a left inverse of $D-\mu$.

\begin{lemma}\label{adj.green} Let $M$ be a closed Riemannian spin manifold, and let $D-\mu$ be invertible. Then
$G_{D-\mu}(x,y)$ is the adjoint of $G_{D-\bar{\mu}}(y,x)$, i.e.
$G_{D-\bar{\mu}}(y,x)$ is the integral kernel of the adjoint operator of
$\mathtt{G}_{D-\mu}$.
\end{lemma}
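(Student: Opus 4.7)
The plan is to derive the kernel identity from an operator-level identity coming from formal self-adjointness of $D$, and then pass from operators to integral kernels. Concretely, I want to show $\mathtt{G}_{D-\mu}^{\ast}=\mathtt{G}_{D-\bar\mu}$ as bounded operators on $L^2(\Si_M)$ and then read off what this means for their kernels.

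First I would establish the operator identity. Since $M$ is closed and $D$ is formally self-adjoint (as noted in Section~\ref{spin_prelim}), $D$ is essentially self-adjoint on $C_c^\infty(\Si_M)$, so its closed $L^2$-extension satisfies $(D-\mu)^{\ast}=D-\bar\mu$. By hypothesis $D-\mu\colon L^2\to L^2$ is invertible, hence so is its adjoint, and $((D-\mu)^{-1})^{\ast}=(D-\bar\mu)^{-1}$. Using the identification $\mathtt{G}_{D-\mu}=(D-\mu)^{-1}$ noted right after Proposition~\ref{Green-compact} (and the analogous statement for $\bar\mu$), this gives
\[
  \mathtt{G}_{D-\mu}^{\ast}=\mathtt{G}_{D-\bar\mu}.
\]

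Next I would translate this into a statement about integral kernels. For a bounded integral operator $T$ on $L^2(\Si_M)$ with kernel $K(x,y)\in \Hom(\Si_M|_y,\Si_M|_x)$, pairing with test spinors $\psi,\phi\in C_c^\infty(\Si_M)$ and applying Fubini gives
\[
  (T\psi,\phi)_{L^2}=\int_{M\times M}\<K(x,y)\psi(y),\phi(x)\>\,\d x\,\d y=(\psi,T^{\ast}\phi)_{L^2},
\]
so the kernel of $T^{\ast}$ is $K^{\ast}(x,y)=K(y,x)^{\dagger}$, where ${\dagger}$ is the fibrewise hermitian adjoint. Applied to $T=\mathtt{G}_{D-\mu}$ with kernel $G_{D-\mu}(x,y)$ and compared with the kernel $G_{D-\bar\mu}(x,y)$ of $\mathtt{G}_{D-\bar\mu}$, this yields $G_{D-\bar\mu}(x,y)=G_{D-\mu}(y,x)^{\dagger}$, which after swapping the roles of $x$ and $y$ is exactly the asserted identity.

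The only slightly delicate point is justifying the Fubini step, because $G_{D-\mu}$ is only locally integrable on $M\times M$ with a singularity along the diagonal. However, the parametrix construction recalled before the proof of Proposition~\ref{Green-compact} shows that $G_{D-\mu}$ is in fact $L^1(M\times M)$ (it differs from the $L^1$-parametrix $P_{D-\mu}$ by a smooth section on the compact space $M\times M$), so Fubini is legitimate for bounded test spinors and the computation above is rigorous. If one prefers to avoid the kernel-adjoint bookkeeping altogether, an equivalent route is to define $\widetilde G(x,y):=G_{D-\bar\mu}(y,x)^{\dagger}$, verify directly from the defining distributional equation \eqref{def.green} for $G_{D-\bar\mu}$ (with $x,y$ interchanged) that $\widetilde G$ satisfies \eqref{def.green} for $D-\mu$, and then invoke the uniqueness part of Proposition~\ref{Green-compact} to conclude $\widetilde G=G_{D-\mu}$.
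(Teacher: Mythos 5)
Your proof is correct and follows essentially the same route as the paper: establish the operator identity $\mathtt{G}_{D-\mu}^{\ast}=\mathtt{G}_{D-\bar\mu}$ and then read off the kernel statement by Fubini. The only difference is that the paper cites Lemma~\ref{conj_spectrum}(ii) for the step $((D-\mu)^{-1})^{\ast}=(D-\bar\mu)^{-1}$, while you rederive it directly from essential self-adjointness of $D$ in the $L^2$ setting; both are fine. Your explicit justification of the Fubini step via the $L^1$-integrability of the kernel on $M\times M$, and the alternative argument via uniqueness of the Green function, are nice additions but not departures in strategy.
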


\begin{proof} Using the definitions and discussions from above and Lemma~\ref{conj_spectrum}(ii) we have
$\mathtt{G}_{D-\mu}^*=((D-\mu)^{-1})^*=(D-\bar{\mu})^{-1}=\mathtt{G}_{D-\bar{\mu
}}$. In particular, we get for all $\psi, \phi\in L^2(\Si_M)$ that
\begin{align*}
(\psi,\mathtt{G}_{D-\mu}^*\phi)=& (\mathtt{G}_{D-\mu}\psi,\phi)=
((D-\mu)^{-1}\psi,\phi) = (\psi, (D-\bar{\mu})^{-1}\phi)\\
&=\int\int \< \psi(y), G_{D-\bar{\mu}}(y,x)\phi(x)\> \d y\d x.
\end{align*}
\end{proof}

Moreover, we have

\begin{lemma}\label{compact_sym_Green}
In the situation of Lemma~\ref{adj.green}  we have $(D_y^{\rm t}-\mu) G_{D-\mu}(x,y)=\delta_x\Id_{\Si_M^*|_x}$, i.e., for $f_0\in \Gamma ({\Si_M^*}|_{x})$, $\phi\in C_c^\infty
(\Si_M)$ 
\begin{align*}
 \int ((D_y^{\rm t}-\mu) G_{D-\mu}(x,y) f_0)(\phi(y))\d y= f_0(\phi(x)).
\end{align*}

\end{lemma}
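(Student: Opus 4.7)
The strategy is to transpose the left-inverse identity $\mathtt{G}_{D-\mu}\circ(D-\mu)=\Id$ to the dual spinor bundle and then recognize the distributional action of $D^{\rm t}-\mu$.

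First I would use that $\mathtt{G}_{D-\mu}$ is a left inverse of $D-\mu$, as noted just above Lemma~\ref{adj.green}. Unpacking this, for every $\phi\in C_c^\infty(\Si_M)$ one has
\[\int_M G_{D-\mu}(x,y)\,(D-\mu)\phi(y)\,\d y=\phi(x),\]
and this equality is valid pointwise in $x$: it holds in $L^2$ by construction, and both sides are continuous in $x$ (the left side by elliptic regularity, the right side since $\phi$ is smooth).

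Next I would pair with $f_0\in \Si_M^*|_x$. Viewing $G_{D-\mu}(x,y)\in \Si_M|_x\otimes \Si_M^*|_y$ as either a linear map $\Si_M|_y\to \Si_M|_x$ or, by transposition, as $\Si_M^*|_x\to \Si_M^*|_y$, the tautological identity $f_0(G_{D-\mu}(x,y)\eta)=(G_{D-\mu}(x,y)f_0)(\eta)$ holds for every $\eta\in \Si_M|_y$. Applying this with $\eta=(D-\mu)\phi(y)$ and integrating over $y$ yields
\[\int_M (G_{D-\mu}(x,y)f_0)\bigl((D-\mu)\phi(y)\bigr)\,\d y=f_0(\phi(x)).\]
Finally I would read the left-hand side distributionally via the relation $\int(D^{\rm t}g)(\phi)\,\vo_g=\int g(D\phi)\,\vo_g$ from Section~\ref{spin_prelim}, shifted by $-\mu$: since the section $g(y)\definedas G_{D-\mu}(x,y)f_0$ is smooth on $M\setminus\{x\}$ and locally integrable on $M$, this produces exactly
\[\int_M\bigl((D^{\rm t}_y-\mu)G_{D-\mu}(x,y)f_0\bigr)(\phi(y))\,\d y=f_0(\phi(x)),\]
which is the statement of the lemma.

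The only step requiring real attention is the bookkeeping of the transpose/adjoint identification of $G_{D-\mu}(x,y)$ when switching between the spinor bundle and its dual; the analytic aspect of applying $D^{\rm t}$ distributionally to $G_{D-\mu}(x,\cdot)f_0$ is immediate from the definition, since $G_{D-\mu}$ is locally integrable (Proposition~\ref{Green-compact}) and $\phi$ is smooth and compactly supported.
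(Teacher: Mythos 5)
Your proof is correct and is essentially the paper's proof read in reverse: the paper starts from the left-hand side, unwinds the distributional definition of $D^{\rm t}_y-\mu$, swaps $f_0$ through $G_{D-\mu}(x,y)$ by the tautological duality, and finishes with the left-inverse property, whereas you start from the left-inverse identity, pair with $f_0$ via the same duality, and read the result as the distributional action of $D^{\rm t}_y-\mu$. Same three ingredients, same argument; your remarks on pointwise validity and the transpose bookkeeping just make explicit what the paper leaves implicit.
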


\begin{proof} 

\begin{align*}
 \int ((D_y^{\rm t}-\mu) G_{D-\mu}(x,y) f_0)(\phi(y)) \d y=& \int (G_{D-\mu}(x,y) f_0)( (D_y-\mu) \phi(y))\d y\\
 =& \int f_0 (G_{D-\mu}(x,y) (D_y-\mu) \phi(y))\d y\\
 =& f_0(\phi(x)).
\end{align*}
where the last step uses that $\mathtt{G}_{D-\mu}$ is also the left inverse of $D-\mu$. 

\end{proof}

Now, $M$ has no longer to be closed, but we assume bounded geometry.

\begin{prop}\label{prop.green.exists} 
Let $(M,g)$ be a Riemannian spin manifold of bounded geometry. Let $D-\mu\colon
L^2(\Si_M)\to L^2(\Si_M)$ be invertible. Then there exists a unique Green function.
\end{prop}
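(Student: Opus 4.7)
The plan is to mimic the compact proof (Proposition~\ref{Green-compact}) by first producing a parametrix with uniform support properties, and then subtracting a correction built from the global $L^2$-inverse supplied by the hypothesis. The one nontrivial input that is not quite off-the-shelf in our setting is a parametrix adapted to bounded geometry, which is where most of the technical work is hidden.

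First I would construct a parametrix $P_{D-\mu}(x,y)$ with the following properties: it is smooth on $M\times M\setminus\Delta$, supported in $\{(x,y):d(x,y)\le \epsilon\}$ for some uniform $\epsilon>0$, locally $L^1$ on $M\times M$, and it satisfies
\[
(D_x-\mu)P_{D-\mu}(x,y)=\delta_y \Id_{\Si_M|_y}+R(x,y)
\]
in the distributional sense, where $R$ is a smooth section of $\Si_M\boxtimes\Si_M^*$ supported in the same uniform neighborhood of the diagonal with all covariant derivatives uniformly bounded. In bounded geometry this is standard: use normal coordinates of uniform radius, construct a symbolic parametrix for the Euclidean model in each coordinate chart (as in \cite[III.\S 4]{lawson.michelsohn:89}), and glue via a uniformly locally finite partition of unity obtained from Lemma~\ref{cover}. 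The symbol estimates are uniform because the coefficients of $D-\mu$ in these charts have uniformly bounded geometry.

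Next, for each fixed $y\in M$ the function $R(\cdot,y)$ is smooth and compactly supported (in $\overline{B_\epsilon(y)}$), hence lies in $L^2(\Si_M)$. Since $D-\mu\colon L^2\to L^2$ is invertible by assumption, I can set
\[
P'_{D-\mu}(\cdot,y)\definedas (D-\mu)^{-1}R(\cdot,y)\in L^2(\Si_M),
\]
which depends smoothly on $y$. Elliptic regularity applied to $(D_x-\mu)P'_{D-\mu}(x,y)=R(x,y)$ shows $P'_{D-\mu}$ is smooth in $x$ (and jointly in $(x,y)$). Now define
\[
G_{D-\mu}(x,y)\definedas P_{D-\mu}(x,y)-P'_{D-\mu}(x,y).
\]
By construction $(D_x-\mu)G_{D-\mu}(x,y)=\delta_y\Id_{\Si_M|_y}$ in the distributional sense. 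The parametrix $P_{D-\mu}$ is locally $L^1$ on $M\times M$ and the correction $P'_{D-\mu}$ is smooth, so $G_{D-\mu}$ is locally $L^1$ on $M\times M$. For the $L^2$-decay, fix $y$ and $r>0$: $P_{D-\mu}(\cdot,y)$ is compactly supported in $B_\epsilon(y)$ and smooth outside $B_r(y)$, hence lies in $L^2(M\setminus B_r(y))$; $P'_{D-\mu}(\cdot,y)$ lies in $L^2(M)$ by definition. This yields $G_{D-\mu}(\cdot,y)\in L^2(M\setminus B_r(y))$.

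For uniqueness, suppose $\widetilde G_{D-\mu}$ is another Green function. Then for each $y$, the difference $\phi\definedas G_{D-\mu}(\cdot,y)-\widetilde G_{D-\mu}(\cdot,y)$ satisfies $(D-\mu)\phi=0$ in the distributional sense, hence $\phi$ is smooth by elliptic regularity (the Dirac singularities of $G$ and $\widetilde G$ at $y$ cancel). On $M\setminus B_r(y)$ we have $\phi\in L^2$ from the Green-function axiom, while smoothness forces $\phi\in L^2(B_r(y))$ for any sufficiently small $r>0$; hence $\phi\in L^2(M)$ and lies in $\ker(D-\mu)$, so $\phi=0$ by invertibility. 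The main obstacle is therefore the uniform parametrix construction in Step~1, which relies on the bounded geometry hypothesis in an essential way; once that is in hand, the remaining steps track the compact proof verbatim.
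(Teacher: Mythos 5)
Your proof is correct, but it takes a genuinely different route from the paper's. The paper does not build a symbolic parametrix from scratch: instead it invokes the Covering Lemma~\ref{cover} to produce uniformly locally finite balls $B_{3R}(x_i)$, embeds each ball isometrically into a closed manifold $M_{x_i}$ diffeomorphic to a sphere, deforms the metric away from $B_{3R}(x_i)$ so that $D^{M_{x_i}}-\mu$ becomes invertible (Proposition~\ref{app_C}), takes the compact Green functions $G^{x_i}$ furnished by Proposition~\ref{Green-compact}, and glues them via a partition of unity to get the near-diagonal approximation $H$. The argument that $(D_x-\mu)H-\delta_y\Id$ is smooth then rests on the observation that differences $G^{x_i}-G^{x_j}$ are annihilated by the elliptic operator $(D_x-\mu)^2+(D_y^{\rm t}-\mu)^2$ on the overlaps, so the transition terms are smooth. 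Your approach instead constructs a genuine symbolic parametrix in uniform normal coordinates and glues the local Euclidean parametrices; this is cleaner conceptually and avoids both the embedding trick and the genericity result Proposition~\ref{app_C}, but it shifts the burden onto the uniform pseudodifferential calculus on bounded-geometry manifolds (uniform symbol estimates, uniform Borel summation for the asymptotic series, and uniform control of all $y$-derivatives of the error $R$), which you flag as ``standard'' but which is exactly the technical content the paper's construction is designed to sidestep: the compact Green functions come for free from the closed-manifold theory, and the covering lemma supplies the needed uniformity with no symbol calculus at all. Either route is legitimate. One small point worth spelling out in your version: joint smoothness of $P'_{D-\mu}(x,y)$ requires differentiating the equation $(D_x-\mu)P'(\cdot,y)=R(\cdot,y)$ in $y$ and using that $(D-\mu)^{-1}$ is bounded on $L^2$ together with the uniform bounds on $\partial_y^\alpha R$; elliptic regularity in $x$ alone does not control $y$-derivatives. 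Your uniqueness argument, which notes that the delta singularities cancel so the difference is a smooth $L^2$ solution of $(D-\mu)\phi=0$, is in fact more explicit than the paper's, which leaves the $L^2$-integrability near $y$ implicit.
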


\begin{proof}
We choose $R>0$ such that  $3R$ is smaller than the injectivity radius. 
Let  $(x_i)_{i\in I}$ be as in the Covering Lemma~\ref{cover}.
We define $(M\times M)_\ep:=\{(x,y)\in M\times M\,|\, \dist(x,y)<\ep\}$. 
Because of $M=\bigcup_{i\in I} B_{2R}(x_i)$ we have
\[(M\times M)_R \subset\bigcup_{i\in I} B_{3R}(x_i)\times B_{3R}(x_i).\] 

We embed 
each ball $B_{3R}(x_i)$  isometrically into a closed connected 
manifold $M_{x_i}$, which is diffeomorphic to a sphere and $D^{M_{x_i}}-\mu$ is
invertible. This can always be achieved by local metric
deformation on $M_{x_i}\setminus B_{3R}(x_i)$, see Proposition~\ref{app_C}. 

Thus, by Proposition \ref{Green-compact} the operator $D^{M_{x_i}}-\mu$
possesses a Green function  
$G^{x_i}(x,y)$ with $(D^{M_{x_i}}_x-\mu)G^{x_i}(x,y)=\delta_y \Id_{\Si_y}$.
By abuse of notation we will view $G^{x_i}(x,y)$ for $x,y\in B_{3R}(x_i)$ also
as a partially defined section of 
$\Si_M\boxtimes \Si_M^*\to M\times M$, which is defined on $B_{3R}(x_i)\times
B_{3R}(x_i)$.

Now we choose smooth functions $a_i$ on $M\times M$
such that $\supp\, a_i\subset B_{3R}(x_i)\times  B_{3R}(x_i)\subset (M\times
M)_{6R}$ and such that 
$\sum_{i\in I}a_i$ equals to $1$ on $(M\times M)_{R/2}$.
Now we set
\begin{align*}
 H(x,y)=\sum_{i\in I} a_i(x,y)G^{x_i}(x,y). 
\end{align*}
This implies $\supp\, H\subset (M\times M)_{6R}$. Moreover, $H(.,y)\in
L^2(M\setminus B_r(y))$ for all $r>0$ since this is true for each summand.

Our next goal is to prove that $(D_x-\mu) H(x,y) - \delta_y \Id_{\Si_y}$ is
smooth.
Note that $G^{x_i}(x,y)$
and $G^{x_j}(x,y)$ are both defined for $(x,y)\in (B_{3R}(x_i)\times  B_{3R}(x_i))\cap  (B_{3R}(x_j)\times
 B_{3R}(x_j))$, but they will not coincide in general. On
the other hand their defining property and the 
locality of the differential operator $D$ (cp. Lemma~\ref{compact_sym_Green})
imply that 
  \[(D_x-\mu)\left(G^{x_i}(x,y)-G^{x_j}(x,y)\right)=
(D_y^{\rm t}-\mu)\left(G^{x_i}(x,y)-G^{x_j}(x,y)\right)=0.\]
Thus, 
\[ \underbrace{((D_x-\mu)^2+(D_y^{\rm t}-\mu)^2)}_{=:P} \left(G^{x_i}(x,y)-G^{x_j}(x,y)\right)=0.\]
Since $P$ is an elliptic operator, elliptic regularity implies that $G^{x_i}(x,y)-G^{x_j}(x,y)$ viewed as a difference of distributions is a smooth function on
$(B_{3R}(x_i)\times  B_{3R}(x_i))\cap  (B_{3R}(x_j)\times  B_{3R}(x_j))$, and
thus
$a_j(x,y)\left(G^{x_i}(x,y)-G^{x_j}(x,y)\right)$ as well. 
On $B_{3R}(x_j)\times  B_{3R}(x_j)$ we rewrite
  \[H(x,y)= G^{x_j}(x,y) + \sum_{i\in I\setminus\{j\}}
a_i(x,y)\left(G^{x_i}(x,y)-G^{x_j}(x,y)\right),\]
and we conclude that 
$(D_x-\mu)H(x,y)=\delta_y \Id_{\Si_y} + F(x,y)$ where $F(x,y)$ is a smooth
section of $\Si_M\boxtimes \Si_M^*$ with support in $(M\times M)_{6R}$.

There is a unique section $H'$ of $\Si_M\boxtimes \Si_M^*$ such that
$(D_x-\mu)H'(x,y)=F(x,y)$ and such that $H'(.,y)$ is
$L^2$ for all $y$. This follows for each $y$ from the assumption that $D-\mu$ is invertible. As $D-\mu$ is a linear operator with continuous inverse and by elliptic regularity~$H'$ is smooth in $x$ and $y$.

We set $G(x,y)=H(x,y)-H'(x,y)$, and this gives a Green function for $D-\mu$.

Assume that $G$ and $\tilde{G}$ are two Green functions for $D$, then
$(D_x-\mu)((G-\tilde{G})(.,y))=0$. By the invertibility, $G=\tilde{G}$ follows.
Smoothness follows by smoothness of all $G^{x_i}$, and smoothness of $F$ and
$H'$. 
\end{proof}

Note that due to the last Proposition Lemmata~\ref{adj.green}
and~\ref{compact_sym_Green} also hold true for manifolds $M$ of bounded geometry. \\

We finish this section by stating another property of the Green function:

\begin{lem}
 Let $(M,g)$ be a Riemannian spin manifold of bounded geometry, and let $D-\mu$ be invertible. Then the Green
function also decays in $L^2$ in the second entry, i.e., $G_{D-\mu}(x,.)\in
L^2(M\setminus B_r(x))$ for all $r>0$.
\end{lem}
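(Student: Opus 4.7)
The plan is to reduce the $L^2$-decay in the second entry to the already-established $L^2$-decay in the first entry, by applying Lemma~\ref{adj.green} and using the existence of the Green function of $D-\bar{\mu}$.

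First I would invoke the fact (used implicitly in Lemma~\ref{adj.green} via Lemma~\ref{conj_spectrum}(ii)) that invertibility of $D-\mu\colon L^2\to L^2$ implies invertibility of $D-\bar{\mu}\colon L^2\to L^2$, because the $L^2$-spectrum of a (formally) self-adjoint operator is symmetric with respect to complex conjugation. Consequently, Proposition~\ref{prop.green.exists} yields a unique Green function $G_{D-\bar{\mu}}$, which by the very definition satisfies $G_{D-\bar{\mu}}(\cdot,x)\in L^2(M\setminus B_r(x))$ for every $r>0$ and every $x\in M$.

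Next I would use Lemma~\ref{adj.green}: pointwise as a linear map $\Si_M|_x\to \Si_M|_y$, one has
\[ G_{D-\bar{\mu}}(y,x)=G_{D-\mu}(x,y)^{*}.\]
Since taking the adjoint of a linear map between (finite-dimensional) Hermitian inner product spaces preserves the operator norm, this gives the pointwise identity $|G_{D-\mu}(x,y)|=|G_{D-\bar{\mu}}(y,x)|$ for all $(x,y)\in M\times M\setminus\Delta$.

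Integrating over $y\in M\setminus B_r(x)$ then yields
\[ \int_{M\setminus B_r(x)}|G_{D-\mu}(x,y)|^2\,\vo_g(y)=\int_{M\setminus B_r(x)}|G_{D-\bar{\mu}}(y,x)|^2\,\vo_g(y)<\infty,\]
where finiteness of the right-hand side is precisely the defining $L^2$-property of $G_{D-\bar{\mu}}$ in its first entry. This gives the claim. I do not anticipate any serious obstacle: the only subtle point is verifying that invertibility of $D-\mu$ implies invertibility of $D-\bar{\mu}$, which follows from the self-adjointness already established in Section~\ref{spin_prelim} together with Lemma~\ref{conj_spectrum}.
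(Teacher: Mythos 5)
Your proof is correct and follows essentially the same route as the paper: reduce the second-entry $L^2$-decay of $G_{D-\mu}$ to the first-entry $L^2$-decay of $G_{D-\bar{\mu}}$ (which holds by definition of the Green function, valid since $D-\bar{\mu}$ is also invertible) via the adjoint relation from Lemma~\ref{adj.green}. The paper's version is just two sentences; you supply the pointwise norm-equality $|G_{D-\mu}(x,y)|=|G_{D-\bar{\mu}}(y,x)|$ and the invertibility of $D-\bar{\mu}$ explicitly, which is a welcome amount of extra detail rather than a deviation.
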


\begin{proof} 
The Green function $G_{D-\bar{\mu}}(.,x)$ is in $L^2(M\setminus B_r(x))$ in the first component. Then the claim follows from Lemma~\ref{adj.green} in the extended version to manifolds $M$ of bounded geometry.
\end{proof}

\section{The Dirac operator  
on hyperbolic space and its products}\label{sec.D.hyp}

In this section we examine the Dirac operator on the model spaces
$\Mc^{m,k}=\mH_c^{k+1}\times N$. Note that we also allow the case where $N$ is zero dimensional. First,
we introduce polar coordinates on $\mH_c^{k+1}$ and write the Dirac operator in these
coordinates. Then, we study the canonical action of $\Spin(k+1)$ on $\Mc^{m,k}$ and its
spinor bundle.

\subsection{The Dirac operator in polar coordinates}

Let us introduce some notation, and let us compare it to notation in the existing literature.

In this section we have to work with spinors on various submanifolds of $\mH_c^{k+1}\times N$.

So let $(Z_b)_{b\in B}$ a smooth family of pairwise disjoint submanifolds of $\mH_c^{k+1}\times N$. For simplicity of presentation we assume that all $Z_b$ are isomorphic to $Z$, in particular we obtain a smooth map $F\colon  Z\times B\to M$. The tangent space $TZ_b$ carries an induced connection and similar the normal bundle $\nu_b\to Z_b$ of $Z_b$ in $M$. As vector bundles $TM|_{Z_b}$ equals $\nu_b \oplus TZ_b$. The connection on those vector bundles are denoted by $\nabla^M$ for $TM|_{Z_b}$ and $\nabla^{\rm int}$ for $\nu_b \oplus TZ_b$. The difference is essentially the second fundamental form $\II_{Z_b}$ of $Z_b$ in $M$.

Putting all these bundles together for various $b$ we obtain the following bundles over $Z\times B$:
\[ F^*TM=\bigcup_{b\in B} TM|_{Z_b},\ TZ_B:=\bigcup_{b\in B} TZ_b,\ \nu_B:=\bigcup_{b\in B} \nu_b.\]
Again as bundles with scalar products we have $F^*TM=TZ_B\oplus \nu_B$ but both sides carry different metric connections. The pullback of Levi-Civita connection on $TM$ to $F^*TM$ is denoted by $\nabla^M$ whereas the sum connection on the right hand side is denoted by $\nabla^{\rm int}$ where for $X\in T_xZ_b$, $Y\in C^\infty (TZ_B)$ and $W\in C^\infty(\nu_B)$ we have 

\[\nabla_X^M Y-\nabla_X^{\rm int}Y= \II_{Z_b}(X,Y),\ \<\underbrace{ \nabla_X^M W- \nabla_X^{\rm int}W}_{\in T_xZ_b}, Y\>=-\<\II_{Z_b}(X,Y), W\>.\]

These two connection define connection 1-forms on the pullbacks of the frame bundle of $M$ and the spin structure of $M$. So we finally obtain connections, again denoted by $\nabla^M$ and $\nabla^{\rm int}$, on $F^*\Si_M\to Z\times B$.

In particular we have for all $X\in TZ_B$ and all spinors $\phi\in C^\infty(F^*\Si_M)$
\begin{equation}\label{spinor-induced}
\na^M_X \phi =  \na^\intr_X \phi + \frac12 \sum_i e_i\cdot
\II_Z(X,e_i)\cdot\phi 
\end{equation}
where $(e_i)_i$ is a local orthonormal frame on $F^*\Si_M$, cp. \cite[around (9)]{baer_98}.

\begin{rem} 
 In \cite{baer_98} a slightly different notation is used, as can be seen in the following dictionary of notations

 \begin{center}
\begin{tabular}{l||c|c|c|c|c|c}
Bär \cite{baer_98} & $Q$  & $M$ & $\na^Q$ and  & $\na^M\oplus \na^N$ and        
               & $\widehat D$ &  $\tilde D$\\
                   &      &     & $\na^{\Si Q}$ & $\na^{\Si M}\otimes \id + \id
\otimes \na^{\Si N}$ &  & \\
\hline
&&&&&&\\[-3mm]
Our article & $M=\mH_c^{k+1}\times N$  & $Z$ & $\na^M$ & $\approx \na^\intr$ &
$D^Z_\pa$ & $D_\intr^Z$
\end{tabular}
\end{center}\hfill\\

Furthermore, in \cite{baer_98} only the case that $B$ is a point is formally studied but the calculations in there immediately generalize to our setting.

Also be aware that in \cite{BGM} a further notation is used which has several advantages if the submanifold is a hypersurface which is not the case in our article. In \cite{BGM} the Clifford multiplication of the ambient manifold coincides with the Clifford multiplication on the submanifold only up to Clifford multiplication with the normal vector field. In contrast to this in our notation the Clifford multiplication of the ambient space $M$ coincides with the one on the submanifold $Z$.
\end{rem}

The partial Dirac operators
$D^Z_\pa$ are now defined as $D^Z_\pa=\sum_{i} e_i\cdot \nabla^M_{e_i}$, and  the
intrinsic Dirac operators are given by $D_\intr^Z=\sum_{i} e_i\cdot
\nabla^\intr_{e_i}$. As this definition does not depend on the choice 
of frame, it yields a global definition. The intrinsic Dirac operator is  
a twisted Dirac operator on the submanifold $N$. In the following applications all normal bundles have 
a parallel trivialization, hence, in this case the intrinsic Dirac operator
coincides on the submanifold with several copies of the Dirac operator on this
submanifold. As multiplicities are irrelevant for our discussion we have chosen the name 'intrinsic Dirac operator' for $D_{\rm int}$, slightly abusing the language.

By \eqref{spinor-induced}, the intrinsic Dirac operator $D_\intr^Z$ is related
to 
the partial Dirac operator $D_\pa^Z$ via 
  $$D_\pa^Z\phi =D_\intr^Z \phi - \frac12 \Hvec_Z\cdot \phi,$$ 
where $\Hvec_Z=\trace\, \II_Z$ is the unnormalised mean curvature vector field
of $Z$ in $M$, 
see \cite[Lemma~2.1]{baer_98}.\\

We now come to our specific situation $M=\Mc^{m,k}$: We express the hyperbolic
metric in polar normal coordinates centered in a fixed point $p_0$ which will be
sometimes identified  with $0$. In these polar coordinates $\Mc^{m,k}\setminus (\{p_0\}\times N)$ is parametrized by
$\mR^+\times \mS^k\times N$.

We are especially interested in the submanifolds $Z$ of $M=\Mc^{m,k}$ that are
either $\mR^+\times \{x\}\times \{y\}$ or $\{r\}\times \mS^k\times \{y\}$ or
$\{r\}\times \{x\}\times N^n$, always equipped with the metric induced from $\Mc^{m,k}$.
In the following we will address these families of submanifolds shortly by $\mR^+$, $\mS^k$ and
$N$. The corresponding spaces $B$ are then  $ \mS^k\times N$, $\mR^+\times N$ and $\mR^+\times \mS^k$, respectively. On an open set we choose an orthonormal frame $e_1,\ldots,e_m$, 
$m=n+k+1=\dim M$, such that $e_{k+2},\ldots,e_m$ is an orthonormal frame for 
the submanifolds $N$, and $e_{2},\ldots, e_{k+1}$ is an orthonormal
frame for  $\mS^k$ and where $e_1:=\pa_r$. The notation should be read such that
$\frac{\nabla}{dr}$ and $\pa_r$ denote 
essentially the same (radial) vector, but $\pa_r$ is viewed as a vector 
which acts via 
Clifford multiplication whereas $\frac{\nabla}{dr}$ acts as a  
covariant derivative.

The Dirac operator $D$ on $(r_0,\infty)\times \mS^k\times N$ 
is the sum of partial Dirac operators 
  $$D=  \pa_r \cdot \frac{\nabla}{dr}+ D_\pa^{\mS^k} + D_\pa^N $$

where the partial Dirac operators along $N$ and $\mS^k$
are locally defined as 
\begin{align*}
  D_\pa^N \phi := \sum_{i=1}^ne_i\cdot \nabla^M_{e_i} \phi, \quad 
  D_\pa^{\mS^k}\phi :=\sum_{i=n+1}^{n+k}e_i\cdot \nabla^M_{e_i} \phi,
\end{align*}
for $\phi\in C^\infty(\Si_M)$. 

The intrinsic Dirac operators along $N$ and $\mS^k$ are given by
\begin{align*}
  D_\intr^N \phi := \sum_{i=1}^ne_i\cdot \nabla^{\intr}_{e_i} \phi, \quad
  D_\intr^{\mS^k}\phi :=\sum_{i=n+1}^{n+k}e_i\cdot \nabla^{\intr}_{e_i} \phi.
\end{align*}

We denote the second fundamental form of $\mS^k$ in $\mH^{k+1}_c$ as
$\II_{\mS^k}$ and set $\Hvec_{\mS^k}:= \trace\, \II_{\mS^k}$. Then $\II_{\mS^k}$
and $\Hvec_{\mS^k}$ 
do not depend on whether they represent the second fundamental form and the mean
curvature field of $\mS^k$ in $\mH^{k+1}_c$, or of  
$\mS^k$ in $\mH^{k+1}_c\times N$ or of $\mS^k\times N$ in $\mH^{k+1}_c\times N$.

 Using $\Hvec_N=0$ and $f(r)=\sinh_c(r)$, cp. Section~\ref{sec_H}, 
\begin{equation*}
\Hvec_{\mS^k\times N}=\Hvec_{\mS^k}= -k \frac{\partial_r
f(r)}{f(r)}\pa_r=-k\coth_c(r)
\end{equation*}
we obtain $D^N:=D_\pa^N=D_\intr^N$ and
$D_\pa^{\mS^k}=D_\intr^{\mS^k}+\frac{k}2\coth_c(r)\pa_r\cdot$. 

We set $D^{\mS^k}:= f(r)D_\intr^{\mS^k}$ which is on each spherical submanifold up to 
multiplicity the standard Dirac operator on $\mS^k$ and
obtain
\begin{align}\label{D_dec}D= \frac{1}{\sinh_c(r)}D^{\mS^k}  + \pa_r\cdot  \frac{\nabla}{dr} +
\frac{k}2\coth_c(r)\pa_r\cdot + D^N.\end{align}

\begin{lemma}\label{commut.rel}
The following operators anticommute:
$D^N$ with $D^{\mS^k}$, $D^N$ with 
$\pa_r\cdot$, $D^N$ with $ \pa_r\cdot  \frac{\nabla}{dr}$, $D^{\mS^k}$ with  $ \pa_r\cdot$, and  $D^{\mS^k}$ with $\pa_r\cdot  \frac{\nabla}{dr}$. However $\pa_r\cdot$ 
commutes with  $ \pa_r\cdot  \frac{\nabla}{dr}$, and
 $(D^{\mS^k})^2$ commutes with $D$.
\end{lemma}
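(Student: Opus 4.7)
The plan is to verify each relation by a direct computation in a well-chosen local orthonormal frame. Take $e_1 = \pa_r$, let $e_2,\ldots,e_{k+1}$ be an orthonormal frame tangent to $\mS^k_{f(r)}$ obtained by radial parallel transport from a reference sphere, and let $e_{k+2},\ldots,e_m$ be lifts of a local orthonormal frame on $N$. The basic ingredients I would use are: the Clifford relation $e_i\cdot e_j + e_j\cdot e_i = -2\delta_{ij}$; the product decomposition $TM = T\mH_c^{k+1}\oplus TN$, which is parallel and has no mixed curvature, so that $\nabla^M_{e_i}e_j = 0$ and $[\nabla^M_{e_i},\nabla^M_{e_j}]=0$ on $\Si_M$ whenever exactly one of $i, j$ lies in $\{k+2,\ldots,m\}$; the polar identities $\nabla^M_{\pa_r}\pa_r = 0$ and $\nabla^M_{e_1}e_i = 0$ forced by the frame choice; and the warped product relations $\nabla^M_{e_i}\pa_r = \coth_c(r)\,e_i$ and $f'(r) = f(r)\coth_c(r)$.

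The three anticommutation relations involving $D^N = \sum_{j\geq k+2}e_j\cdot\nabla^M_{e_j}$ then follow immediately, because in each case the other operator is built from $\{e_i,\nabla^M_{e_i}\}_{i\leq k+1}$: its derivatives commute with the $\nabla^M_{e_j}$'s in $D^N$ by the product structure, its Clifford factors anticommute with the $e_j\cdot$'s, and each such operator carries an odd number of those Clifford factors. The commutation of $\pa_r\cdot$ with $\pa_r\cdot\frac{\nabla}{dr}$ reduces to $\pa_r\cdot\pa_r\cdot = -\id$ together with $\nabla^M_{\pa_r}\pa_r = 0$. The anticommutation of $D^{\mS^k}$ with $\pa_r\cdot$ follows from Clifford anticommutation of $\pa_r$ with every spherical $e_i$, combined with the fact that $\pa_r$ is parallel for the normal connection along the sphere, so Clifford multiplication by $\pa_r$ commutes with every $\nabla^{\intr}_{e_i}$.

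The only delicate case is the anticommutation of $D^{\mS^k}$ with $\pa_r\cdot\frac{\nabla}{dr}$, where the curvature of $\mH_c^{k+1}$ genuinely enters. Expanding $D^{\mS^k}(\pa_r\cdot\nabla_{\pa_r}\phi) + \pa_r\cdot\nabla_{\pa_r}(D^{\mS^k}\phi)$ in the chosen frame produces four types of correction terms: a $\coth_c(r)\nabla^M_{\pa_r}\phi$ term coming from $\nabla^M_{e_i}\pa_r = \coth_c(r)e_i$; a curvature contribution $R^M(\pa_r,e_i)\phi = \tfrac{c^2}{2}\,e_1\cdot e_i\cdot\phi$ from the constant sectional curvature $-c^2$ of $\mH_c^{k+1}$; a bracket contribution from $[\pa_r,e_i] = -\coth_c(r)e_i$; and the $r$-derivative of the coefficient $f(r)\coth_c(r) = f'(r)$. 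The identities $f' = f\coth_c$ and $(f\coth_c)' = c^2 f$ force all of these to cancel exactly in pairs, yielding the anticommutation. This cancellation is the main technical obstacle; every other case reduces to Clifford algebra plus the product structure of $M$.

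Finally, $(D^{\mS^k})^2$ commutes with $D$ by \eqref{D_dec}: $D$ is the sum of $\frac{1}{\sinh_c(r)}D^{\mS^k}$, $\pa_r\cdot\frac{\nabla}{dr}$, $\frac{k}{2}\coth_c(r)\pa_r\cdot$, and $D^N$. The operator $D^{\mS^k}$ commutes with the first summand and with every function of $r$ (since it differentiates only spherical directions), and by the relations just proved it anticommutes with each of the other three. Since the square of any operator commutes with anything that anticommutes with it, $(D^{\mS^k})^2$ commutes with every summand of $D$, and hence with $D$ itself.
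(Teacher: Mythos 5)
Your proof is correct, but it takes a genuinely different route from the paper's. The paper introduces the \emph{intrinsic} connection $\nabla^{\intr}$ and a connection-preserving bundle isomorphism $I_c$ from $(\Si_{\Mc^{m,k}},\nabla^{\intr})$ to the spinor bundle of the cylinder $\mR^+\times\mS^k\times N$ with its Levi-Civita connection (diagram~\eqref{polar.spinor}). Since $D^N$, $D^{\mS^k}$, $\pa_r\cdot$ and $\frac{\nabla}{dr}$ are all $\nabla^{\intr}$-operators, $I_c$ carries them to the corresponding product-structure operators on the cylinder, where all the (anti-)commutation relations are immediate from the Clifford algebra and the trivial product geometry. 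That argument costs nothing beyond setting up $I_c$ and checking its Clifford-compatibility. Your argument instead works directly with the Levi-Civita connection on $\Mc^{m,k}$ in a radially parallel frame: the anticommutations with $D^N$ and the commutation with $\pa_r\cdot\frac{\nabla}{dr}$ are cheap, $\{D^{\mS^k},\pa_r\cdot\}=0$ follows because $\pa_r$ is $\nabla^{\intr}$-parallel along the spheres, and the only real work is $\{D^{\mS^k},\pa_r\cdot\frac{\nabla}{dr}\}=0$, where the curvature $R^\Si(\pa_r,e_i)=\frac{c^2}{2}\pa_r\cdot e_i\cdot$ together with $(\coth_c)'=c^2-\coth_c^2$ and $f'=f\coth_c$ does indeed make all the extra terms cancel. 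What the paper's approach buys is brevity and avoiding any curvature bookkeeping (the cancellation is invisible because $\nabla^{\intr}$ is flat in the relevant directions); what yours buys is that it is self-contained and makes the role of the constant curvature $-c^2$ explicit. One small remark: the bookkeeping in your ``delicate case'' paragraph is a bit loose — in an actual writeup it is cleaner to first observe $\nabla^{\intr}_{e_i}(\pa_r\cdot\,\cdot\,)=\pa_r\cdot\nabla^{\intr}_{e_i}$, reducing the anticommutation to showing $[\nabla^M_{\pa_r},D^{\mS^k}]=0$, and then verify the two-step cancellation (first the $e_i\cdot\pa_r\cdot\phi$-terms via $(\coth_c)'+\coth_c^2-c^2=0$, then the $\nabla^{\intr}_{e_i}\phi$-terms via $f'-f\coth_c=0$); but the strategy and the identified ingredients are right. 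The final step for $(D^{\mS^k})^2$ commuting with $D$ is exactly what one should say.
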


\begin{proof}
Let  $P_\Spin(\mH^{k+1}_c)\to P_\SO(\mH^{k+1}_c)$ and $P_\Spin(N)\to P_\SO(N)$
be the fixed spin structures on 
$\mH^{k+1}_c$ and $N$. Then we write as in Subsection \ref{spin_prod}
  \begin{align}\label{spin_prod_2} \Si_{\mH^{k+1}_c\times N}=
(\underbrace{P_\Spin(\mH^{k+1}_c)\times P_\Spin(N)}_P) \times_\zeta
\Si_m\end{align}
where $\zeta$ is the composition  
$\Spin(k+1)\times \Spin(n)\stackrel{\xi}{\to} \Spin(m)\stackrel{\rho_m}{\to}
\End(\Si_m)$. The bundle $P$ carries the Levi-Civita  connection-$1$-form
$\alpha^{LC}_{\Mc}$ and another  connection-$1$-form $\alpha^\intr$ as explained before. 

We obtain a connection preserving bundle homomorphism $I_c$, which is 
fiberwise an isometric isomorphism, and 
\begin{equation}\label{polar.spinor}
\begin{CD}
\Si_{\mH^{k+1}_c\setminus\{p_0\}\times N},\na^\intr   @>I_c>> \Si_{\mR^+\times
\mS^k\times N}, \na^{LC}\\ 
@VVV                                    @VVV\\ 
\mH^{k+1}_c\setminus\{p_0\}\times N       @>\id>> \mR^+\times \mS^k\times N    
\end{CD}
\end{equation}
commutes. Note that $I_c$ is also compatible with the Clifford multiplication
 in the sense that for $X\in TZ$ we have 
\[ I_c(X\cdot \phi)= \left\{\begin{matrix}
                             X\cdot I_c(\phi) & \text{for\ }Z=\mR^+\times
\{x\}\times \{y\} {\rm\ or\ } \{r\}\times\{x\}\times N\\
			     \frac{f(r)}{r} X\cdot I_c(\phi) & \text{for\ }Z=
\{r\}\times \mS^k\times \{y\}.
                            \end{matrix}
 \right. \] 

 Then the lemma follows immediately by the corresponding statements for
$\Si_{\mR^+\times \mS^k\times N}$. 
\end{proof}

We will also use the map $\hat{I}_c:=I_0^{-1}\circ I_c: \Si_{\mH^{k+1}_c\setminus\{p_0\}\times N}\to \Si_{\mR^{k+1}\setminus\{0\}\times N}$ which allows to identify $\Si_{\mH^{k+1}_c\times N}|_{(x,y)}$ with $\Si_{\mR^{k+1}\times N}|_{(x,y)}$ and thus with $\Si_{\mR^{k+1}\times N}|_{(0,y)}$, $0\cong p_0$.

\subsection{The action of $\Spin(k+1)$ on $\Mc^{m,k}=\mH_c^{k+1}\times
N$}\label{spin_action}
We identify $T_{p_0}\mH^{k+1}_c$ with~$\mR^{k+1}$.
The left action $a_1$ of the spin group $\Spin(k+1)$  on $\mR^{k+1}$
obtained by composing the double covering $\Spin(k+1)\to \SO(k+1)$ 
with the tautological representation yields a left action on $\mH^{k+1}_c$ via
the exponential map $\exp_{p_0}\colon  \mR^{k+1}\to \mH^{k+1}_c$ 
which is a diffeomorphism. 
As this action is isometric it yields a
left action on $P_\Spin(\mH^{k+1}_c)$ -- also called $a_1$. Thus, we obtain a $\Spin(k+1)$-action on
$P_\Spin(\mH^{k+1}_c)\times P_\Spin(N)\times \Sigma_m$ as $\hat{a}_1=a_1\times
\id\times \id$. Since $a_1$ and the principal $\Spin(k+1)$-action which acts from the right commute, the $\hat{a}_1$-action descends
to a $\Spin(k+1)$-action from the left -- denoted by $a_2$ -- on the spinor bundle 
$\Si_{\mH^{k+1}_c\times N}= (P_\Spin(\mH^{k+1}_c)\times P_\Spin(N)) \times_\zeta
\Si_m$ (for $\zeta$ as in \eqref{spin_prod_2}) such that 
 \[
\begin{CD}
\Si_{\mH^{k+1}_c\times N}  @>a_2(\gamma)>> \Si_{\mH^{k+1}_c\times N}\\ 
@VVV                                    @VVV\\ 
\mH^{k+1}_c\times N              @>a_1(\gamma)\times \id>> \mH^{k+1}_c\times N  
\end{CD}
\]
commutes. 

By construction, the action $a_1$ does not depend on $c$. Thus,
Diagram~\eqref{polar.spinor} commutes with this $\Spin(k+1)$-action. 

Moreover, note that $a_1$  preserves the spheres
$\mS_{r,y}^k:=\{r\}\times \mS^k \times \{y\}\subset \mH_c^{k+1}\times N$. Hence, the diagram above can be restricted to this submanifold. In particular,
$a_1$ acts transitively on $\mS_{r,y}^k$. Furthermore, $(p_0,y)$ is a fixed
point of $a_1\times \id$ for all $y\in N$. Thus, the $a_2$-action can be
restricted to an action that maps $\Si_{\mH_c^{k+1}\times N}|_{(p_0,y)}$ to
itself.

\subsection{Spinors on $\mS^k\subset \mR^{k+1}$}

We will now analyse the special case $N=\{y\}$ and $c=0$, thus
$\mH^{k+1}_c=\mR^{k+1}$.
This well-known case is not only important as an example, but will also be used
to derive 
consequences for the general case. 

We obtain immediately from \eqref{spinor-induced} and $\II_{\mS_r^k}= -\frac1r
g|_{\mS_r^k} \partial_r$ where $\mS_r^k$ is the sphere of radius $r$ canonically
embedded in $\mR^{k+1}$: 
\begin{lemma}\label{c=0_spin}
Assume that $\phi$ is a parallel spinor on $\mR^{k+1}$. Then for any $X\in
T\mS_r^k$ we have
  \[\na_X^\intr \phi = -\frac1{2r} \pa_r \cdot X \cdot \phi \ {\rm and\ } \na_X^\intr (\pa_r\cdot \phi) =  \frac1{2r}\pa_r\cdot X  \cdot  (\pa_r \cdot \phi) .\] 
\end{lemma}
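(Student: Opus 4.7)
The plan is to apply the general formula \eqref{spinor-induced} relating the ambient Levi-Civita connection $\nabla^M$ on $\Si_M$ to the intrinsic connection $\nabla^\intr$, specialized to the submanifold $Z = \mS_r^k \subset \mR^{k+1}$. Since $\phi$ is parallel in the sense of $\nabla^M$, we have $\nabla_X^M\phi = 0$, so \eqref{spinor-induced} gives immediately
\[\nabla_X^\intr \phi = -\tfrac{1}{2}\sum_i e_i \cdot \II_{\mS_r^k}(X, e_i)\cdot \phi,\]
where $(e_i)$ is an orthonormal frame of $T\mS_r^k$.

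Next I would plug in the explicit formula for the second fundamental form, $\II_{\mS_r^k}(X, Y) = -\tfrac{1}{r}\, g(X, Y)\, \partial_r$, already quoted just before the statement. Using $\sum_i g(X, e_i) e_i = X$ for $X \in T\mS_r^k$, the sum collapses to $-\tfrac{1}{r}\, X \cdot \partial_r \cdot \phi$, and since $X \perp \partial_r$ the Clifford relation $X\cdot \partial_r = -\partial_r \cdot X$ converts this into $\tfrac{1}{r}\, \partial_r \cdot X \cdot \phi$. This yields the first identity
\[\nabla_X^\intr \phi = -\tfrac{1}{2r}\, \partial_r \cdot X \cdot \phi.\]

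For the second identity I would use that $\partial_r$, viewed as a section of the normal bundle of $\mS_r^k$ in $\mR^{k+1}$, is parallel with respect to the intrinsic (i.e.\ normal) connection: the normal bundle is a line bundle and $\partial_r$ has constant length, so $\nabla_X^\intr \partial_r = 0$. Consequently $\nabla_X^\intr(\partial_r \cdot \phi) = \partial_r \cdot \nabla_X^\intr \phi$, and substituting the first identity and using $\partial_r \cdot \partial_r = -1$ together with $X \cdot \partial_r = -\partial_r \cdot X$ twice gives exactly $\tfrac{1}{2r}\, \partial_r \cdot X \cdot (\partial_r \cdot \phi)$, as claimed.

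There is no real obstacle here; both identities are a direct computation once one recognizes that $\phi$ being $\nabla^M$-parallel makes \eqref{spinor-induced} degenerate to a purely algebraic expression in terms of $\II_{\mS_r^k}$. The only small pitfall to be careful about is the sign from the Clifford anticommutation of orthogonal vectors and the identity $\partial_r \cdot \partial_r = -1$; keeping these straight produces the two stated formulas.
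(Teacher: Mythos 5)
Your proof is correct and follows exactly the route the paper intends: the paper gives no separate proof, but states that the lemma follows ``immediately'' from equation \eqref{spinor-induced} together with $\II_{\mS_r^k} = -\frac{1}{r}\,g|_{\mS_r^k}\,\partial_r$, and your computation is precisely that deduction, with $\nabla_X^\intr\partial_r = 0$ (also used by the paper two lines later) supplying the second identity.
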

In particular, we have 
  \[D^{\mS^k} \phi = r D^{\mS^k}_\intr \phi = -\frac{k}2 \pa_r\cdot \phi \ {\rm and\ }  D^{\mS^k} (\pa_r \cdot \phi)= -\frac{k}2 \pa_r\cdot (\pa_r\cdot  \phi).\]
Using Lemma~\ref{commut.rel} and $\nabla^\intr_X \pa_r=0$ this implies
  \[(D^{\mS^k})^2 \phi = \frac{k^2}4 \phi  \ {\rm and\ }   (D^{\mS^k})^2 (\pa_r\cdot  \phi) = \frac{k^2}4 (\pa_r\cdot\phi). \]

\section{Modes of $\Spin(k+1)$-equivariant maps} \label{sec_symm}

We now have a $\Spin(k+1)$-action on $\Si_{\mR^{k+1}}|_{0}\cong \Si_{k+1}$, $\{r\}\times \mS^k$ and  $\Si_{\mR^{k+1}}|_{\{r\}\times \mS^k}$,  band thus one on $C^\infty(\mS^k, \Si_{\mR^{k+1}}|_{\{r\}\times \mS^k})$ given by $(\gamma\cdot f)(x)= a_2(\gamma)f(a_1(\gamma)^{-1} x)$. To simplify notations we mostly write $\mS^k$ for $\{r\}\times \mS^k$.

We now have to classify $\Spin(k+1)$-equivariant functions $\Si_{\mR^{k+1}}|_{0} \to C^\infty(\mS^k, \Si_{\mR^{k+1}}|_{\mS^k})$.

For $\psi_0\in \Si_{\mR^{k+1}}|_{0}$ let the parallel spinor on $\mR^{k+1}$ with value $\psi_0$ at $0$ be denoted by $\Psi_0$. For $k$ odd, the positive and negative parts of $\Psi_0$ are denoted by $\Psi_0^{(\pm)}$.

\begin{lemma}\label{ex_equ_1} 
Let $ F\colon  \Si_{k+1} \to C^\infty(\mS^k, \Si_{\mR^{k+1}}|_{\mS^k})$ be a $\Spin(k+1)$-equivariant map.  Then for $k$ even $F$ has the form
\[ \psi_0 \mapsto (a_1 \Psi_0 + a_2 \pa_r\cdot \Psi_0)|_{\mS^k}
\]
and for $k$ odd $F$ has the form 
\[ \psi_0 \mapsto (a_{11} \Psi_0^{(+)} + a_{22} \Psi_0^{(-)} +  a_{21} \pa_r\cdot \Psi_0^{(+)} + a_{12} \pa_r\cdot \Psi_0^{(-)})|_{\mS^k} 
\]
for suitable constants $a_i, a_{ij}\in\mC$.
\end{lemma}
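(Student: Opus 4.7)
The plan is to use Frobenius reciprocity for the homogeneous space $\mS^k \cong \Spin(k+1)/\Spin(k)$ to bound the dimension of the space of equivariant maps, and then to exhibit enough independent maps of the prescribed form to saturate that bound. Fix a base point $x_0\in\mS^k$ with $\pa_r|_{x_0}=e_1$; its $a_1$-stabilizer is the subgroup $\Spin(k)\subset\Spin(k+1)$ covering $\SO(k)\subset\SO(k+1)$. By transitivity of the $a_1$-action and $\Spin(k+1)$-equivariance, $F$ is uniquely determined by the single $\Spin(k)$-equivariant linear map $\psi_0\mapsto F(\psi_0)(x_0)$; since $\Si_{\mR^{k+1}}|_{x_0}\cong\Si_{k+1}$ as $\Spin(k)$-representation, Frobenius reciprocity yields
\[\dim\Hom_{\Spin(k+1)}\bigl(\Si_{k+1},\,C^\infty(\mS^k,\Si_{\mR^{k+1}}|_{\mS^k})\bigr)=\dim\End_{\Spin(k)}\bigl(\Si_{k+1}|_{\Spin(k)}\bigr).\]

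I would then compute the right hand side using the standard branching rule for spin representations. For $k$ even, $\Si_{k+1}$ is $\Spin(k+1)$-irreducible and its restriction to $\Spin(k)$ decomposes as $\Si_k^{(+)}\oplus\Si_k^{(-)}$ into two inequivalent irreducibles, so the endomorphism algebra is two-dimensional. For $k$ odd, $\Si_{k+1}=\Si_{k+1}^{(+)}\oplus\Si_{k+1}^{(-)}$ splits as $\Spin(k+1)$-representation and both halves restrict to the single irreducible $\Si_k$; hence $\Si_{k+1}|_{\Spin(k)}\cong\Si_k\oplus\Si_k$ with endomorphism algebra $M_2(\mC)$, of dimension four. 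These dimensions match the number of free parameters $a_1,a_2$ respectively $a_{11},a_{12},a_{21},a_{22}$ appearing in the statement.

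To finish, I would check that the explicit maps in the statement are $\Spin(k+1)$-equivariant and linearly independent. Equivariance of $\psi_0\mapsto\Psi_0|_{\mS^k}$ is immediate from the fact that $a_1(\gamma)$ acts by isometries, so pullback of a parallel spinor is parallel and $\Psi_{a_2(\gamma)\psi_0}=a_2(\gamma)\Psi_0$; equivariance of multiplication by $\pa_r$ follows because $\pa_r|_x$ is a radial vector transforming tautologically under $a_1(\gamma)$ and Clifford multiplication intertwines the spin action. For $k$ odd, the chirality projections onto $\Si_{k+1}^{(\pm)}$ are $\Spin(k+1)$-equivariant, yielding the four maps. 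Independence at $x_0$ follows from the Clifford-algebraic fact that $\pa_r\cdot=e_1\cdot$ anticommutes with $e_j\cdot$ for $j=2,\dots,k+1$ and hence exchanges the two $\Spin(k)$-isotypic summands of $\Si_{k+1}|_{\Spin(k)}$, so $\Psi_0(x_0)$ and $\pa_r\cdot\Psi_0(x_0)$ land in distinct isotypic components; for $k$ odd the additional chirality splitting separates all four images. Having produced as many independent equivariant maps as the dimension of the ambient $\Hom$-space, they form a basis of it.

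The main obstacle I anticipate is the parity-dependent branching rule for $\Si_{k+1}|_{\Spin(k)}$ together with the verification that multiplication by $\pa_r$ really exchanges the relevant $\Spin(k)$-isotypic components at the base point; once these representation-theoretic facts are in hand the rest is formal.
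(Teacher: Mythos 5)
Your overall strategy is sound and genuinely different in emphasis from the paper's. The paper proceeds constructively: evaluating at the north pole yields a $\Spin(k)$-equivariant map $\sigma\colon\Si_{k+1}^{(\delta)}\to\Si_{k+1}^{(\epsilon)}$, and Schur's lemma plus an explicit analysis of Clifford multiplication by the normal vector then \emph{classifies} $\sigma$ directly. You instead use Frobenius reciprocity for $\mS^k=\Spin(k+1)/\Spin(k)$ to obtain the \emph{dimension} of the $\Hom$-space ($2$ for $k$ even, $4$ for $k$ odd from the standard spin branching rules), then exhibit the asserted maps, verify equivariance, check linear independence, and conclude by comparison of dimensions. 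Both approaches hinge on the same two facts (evaluation at a base point together with transitivity, and the $\Spin(k)$-decomposition of $\Si_{k+1}$), but yours replaces the paper's explicit analysis of $\sigma$ with a clean dimension count, which is arguably more transparent.

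However, there is a parity error in your independence verification. You claim that $\pa_r\cdot=e_1\cdot$ anticommutes with each $e_j\cdot$ (true) ``and hence exchanges the two $\Spin(k)$-isotypic summands of $\Si_{k+1}|_{\Spin(k)}$''. The isotypic summands for $k$ even are the $\pm1$-eigenspaces of the tangential volume element $\omega_k=\i^{k/2}\,e_2\cdots e_{k+1}$, a product of $k$ Clifford generators. Anticommuting with each $e_j$ gives $e_1\,\omega_k=(-1)^k\,\omega_k\,e_1$: for $k$ even $e_1\cdot$ \emph{commutes} with $\omega_k$ and hence \emph{preserves} the two summands $\Si_k^{(\pm)}$; it is precisely for $k$ odd that it swaps the chirality splitting $\Si_{k+1}^{(\pm)}$. (The paper records this correctly: for $k$ even $e_{k+1}\cdot$ preserves the splitting and acts as $\pm\diag(\i,-\i)$.) Your independence claim for $k$ even therefore does not follow from the stated reason. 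The conclusion is still true and can be repaired: by Schur's lemma $e_1\cdot$ is $\diag(a,b)$ with $a^2=b^2=-1$, and $a=b$ is impossible since $e_1\cdot$ would then be a central scalar in $\mathrm{Cl}_{k+1}$ yet anticommutes with $e_2$; hence $a\neq b$ and $\{1,e_1\cdot\}$ is linearly independent. With that fix, and with the $k$ odd case as you argued, the dimension match completes the proof.
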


\begin{proof}
First, we note that the maps $F$ described above are actually $\Spin(k+1)$-equivariant since $\pa_r$ is a $\Spin(k+1)$-equivariant vector field.

Let $A\colon \Si_{k+1}^{(\delta)}\hookrightarrow  \Si_{\mR^{k+1}}|_{0}$ be the inclusion map, $\delta\in\{+,-\}$. By composition we obtain for fixed  $\delta, \ep \in \{+, - \}$ a
$\Spin(k+1)$-equivariant map
\begin{align}
  \Si_{k+1}^{(\delta)}\stackrel{A}{\to}&\ \Si_{\mR^{k+1}}|_{0}
\stackrel{F}{\to}  C^\infty(\mS^k, \Si_{\mR^{k+1}}|_{\{r\}\times \mS^k})  \to C^\infty(\mS^k,\Si_{k+1}^{(\ep)}),\label{k+1_map_1}\end{align}
where in the last step we projected $\Si_{k+1}$ to $\Si_{k+1}^{(\ep)}$.
If we compose this map with evaluation at the north pole of the
sphere, then we obtain a $\Spin(k)$-equivariant map  $\sigma\colon  \Si_{k+1}^{(\delta)}\to \Si_{k+1}^{(\ep)}$. Because of the $\Spin(k+1)$-equivariance
of \eqref{k+1_map_1} and since $\Spin(k+1)$ acts transitively on $\mS^k$, this map
uniquely determines the map  
$\Si_{k+1}^{(\delta)}\to C^\infty(\mS^k,\Si_{k+1}^{(\ep)})$ above.

If $k$ is odd, then 
$\Si_{k+1}^{(\ep)}\cong \Si_{k+1}^{(\delta)}\cong \Si_k$ as 
$\Spin(k)$-representations, and Schur's Lemma tells us 
that there is up to scaling a unique such map $\sigma$.
Using the fact that $e_{k+1}\cdot\colon  \Si_{k+1}^{(\pm)}\to \Si_{k+1}^{(\mp)}$, $\sigma$ can be written as
\begin{align*}
\tau\in \Sigma_{k+1}^{(\delta)}\mapsto \left\{
\begin{matrix}
a_{\delta,\delta} \tau  & {\rm for\ }\delta=\ep\\
a_{\delta,\ep}  e_{k+1}\cdot \tau  & {\rm for\ }\delta\neq\ep
\end{matrix}
\right.\in \Sigma_{k+1}^{(\ep)}
\end{align*}
for suitable $a_{\delta, \ep}\in \mC$. As $\pa_r$ is the $\Spin(k+1)$-equivariant extension of $e_{k+1}$ we obtain the lemma for $k$ odd.

If $k$ is even, then 
$\Si_{k+1}=\Si_k= \Si_{k}^{(+)}\oplus \Si_{k}^{(-)}$ as 
$\Spin(k)$-representations. In this case $e_{k+1}\cdot$ commutes with $\Spin(k+1)$ and preserves the splitting. Using Schur's Lemma, $e_{k+1}^2=-1$ and because $e_{k+1}\cdot$ is tracefree we know that $e_{k+1}\cdot$ acts as $\pm {\rm diag}(\i, - \i)$. For $\ep=\delta$ we can
again apply Schur's Lemma. As $\Si_{k}^{(+)}$ and $\Si_{k}^{(-)}$ are not equivalent as $\Spin(k)$-representations the maps  
$\sigma\colon  \Si_{k}^{(\pm)}\to \Si_{k}^{(\mp)}$ have to be identically zero. Thus, with respect to the
splitting of $\Si_{k+1}$ the maps $\sigma$ for different $\delta$ and $\epsilon$ form a $\Spin(k)$-equivariant map $\Si_{k+1}\to \Si_{k+1}$
that can be written as
  $$\begin{pmatrix}b_{1} & 0\\ 0 & b_{2}\end{pmatrix}= \frac{b_1+b_2}{2}\Id +
\frac{b_1-b_2}{2}\begin{pmatrix}1 & 0\\ 0 & - 1\end{pmatrix}$$
for suitable $b_{i}\in\mC$.
Summarizing, for $k$ even, $\sigma$ maps $\tau\mapsto a_1\tau +a_2 e_{k+1}\cdot \tau$ with $a_i\in \mC$.
\end{proof}

Then using Lemma~\ref{c=0_spin} we obtain immediately

\begin{cor}\label{cor_ex}
 Let $ F\colon  \Si_{k+1} \to C^\infty(\mS^k, \Si_{\mR^{k+1}}|_{\mS^k})$ be a $\Spin(k+1)$-equivariant map.  Let $\psi_0\in \Si_{k+1}$ and $\phi=F\psi_0$. Then $(D^{\mS^k})^2\phi=\frac{k^2}{4}\phi$.
\end{cor}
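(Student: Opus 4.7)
The plan is to reduce the statement directly to Lemma~\ref{ex_equ_1} together with the eigenvalue computation in Lemma~\ref{c=0_spin}. First I would invoke Lemma~\ref{ex_equ_1}, which classifies the $\Spin(k+1)$-equivariant maps $F\colon \Si_{k+1}\to C^\infty(\mS^k,\Si_{\mR^{k+1}}|_{\mS^k})$: for any $\psi_0\in\Si_{k+1}$, the image $F\psi_0$ is a $\mC$-linear combination of the restrictions to $\mS^k$ of spinors of the two types $\Psi|_{\mS^k}$ and $\pa_r\cdot \Psi|_{\mS^k}$, where $\Psi$ ranges over parallel spinors on $\mR^{k+1}$ (namely $\Psi_0$ itself for $k$ even and the chirality components $\Psi_0^{(\pm)}$ for $k$ odd, all of which are again parallel).

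Next, I would apply Lemma~\ref{c=0_spin}, which states precisely that for every parallel spinor $\Psi$ on $\mR^{k+1}$,
\[
(D^{\mS^k})^2 \Psi|_{\mS^k} = \tfrac{k^2}{4}\, \Psi|_{\mS^k}, \qquad (D^{\mS^k})^2 (\pa_r\cdot \Psi)|_{\mS^k} = \tfrac{k^2}{4}\, (\pa_r\cdot \Psi)|_{\mS^k}.
\]
Since $(D^{\mS^k})^2$ is a $\mC$-linear differential operator, the eigenvalue $\tfrac{k^2}{4}$ is inherited by any $\mC$-linear combination of such spinors. Combining this with the normal form of $\phi=F\psi_0$ from the previous paragraph yields $(D^{\mS^k})^2\phi=\tfrac{k^2}{4}\phi$.

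There is no real obstacle here; all the work has been done in the two cited lemmas, and the corollary is essentially a bookkeeping consequence. The only point to be a little careful about is that the classification in Lemma~\ref{ex_equ_1} is stated separately for $k$ even and $k$ odd, so strictly speaking one should note that in both cases every building block appearing in the formula for $F\psi_0$ is of the form $\Psi|_{\mS^k}$ or $\pa_r\cdot \Psi|_{\mS^k}$ for some parallel spinor $\Psi$ on $\mR^{k+1}$, so that Lemma~\ref{c=0_spin} applies uniformly.
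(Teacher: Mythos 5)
Your proposal is correct and follows the same route as the paper: after Lemma~\ref{ex_equ_1} classifies $F\psi_0$ as a linear combination of restrictions of parallel spinors (or their chirality components, still parallel) and their Clifford products with $\pa_r$, the consequences of Lemma~\ref{c=0_spin} give each building block the eigenvalue $\tfrac{k^2}{4}$ for $(D^{\mS^k})^2$, and linearity finishes the argument. Your remark that the chirality parts $\Psi_0^{(\pm)}$ are again parallel is exactly the small point one should check, and it is right.
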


We say that $\phi$ is in the spherical mode $\frac{k^2}{4}$, and thus $\phi$ is in the mode of lowest energy on the sphere.\\

Now we want to carry over the last result to $\Mc^{m,k}$. In the following $p_0\in \mH_c^{k+1}$ denotes again the fixed point of the $\Spin(k+1)$-action, and let $y_0, y\in N$.

\begin{lemma}\label{ex_equ_2}
Let $ F\colon  \Si_{\mH_c^{k+1}\times N}|_{(p_0, y_0)} \to C^\infty(\mS^k, \Si_{\mH_c^{k+1}\times N}|_{\mS^k\times \{y\}})$ be a $\Spin(k+1)$-equivariant map.  Let $\psi_0\in \Si_{\mH_c^{k+1}\times N}|_{(p_0, y_0)}$ and $\phi=F\psi_0$. Then $(D^{\mS^k})^2\phi=\frac{k^2}{4}\phi$.
\end{lemma}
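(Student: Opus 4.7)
The plan is to reduce Lemma~\ref{ex_equ_2} to Corollary~\ref{cor_ex} in two stages: first eliminate the factor $N$, then eliminate the curvature parameter $c$.

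For the first step, I would exploit the product structure from Section~\ref{spin_prod}. Pointwise, $\Si_{\mH_c^{k+1}\times N}|_{(x,y)}$ splits as $\Si_{\mH_c^{k+1}}|_x\otimes V|_y$, where $V=\Si_N$, or $V=\Si_N\oplus\Si_N$ if both $k+1$ and $n$ are odd. By its construction in Subsection~\ref{spin_action}, the action $a_2$ descends from $\hat a_1=a_1\times\id\times\id$ and hence acts only on the $\Si_{\mH_c^{k+1}}$-tensor factor. By linearity it suffices to take $\psi_0=\phi^{\mH}\otimes\eta^N$; expanding $F(\phi^{\mH}\otimes\eta^N)$ in a fixed basis of $V|_y$ exhibits $F$ as a finite sum of $\Spin(k+1)$-equivariant maps of the form $\tilde F\colon \Si_{\mH_c^{k+1}}|_{p_0}\to C^\infty(\mS^k,\Si_{\mH_c^{k+1}}|_{\mS^k})$, each tensored with a fixed element of $V|_y$. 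A short calculation based on $(X,0)\cdot_P(\phi\otimes\psi)=(X\cdot_{\mH}\phi)\otimes(\omega_N\cdot_N\psi)$ together with $\omega_N^2=1$ (which also handles the doubled case) shows that $(D^{\mS^k})^2$ acts as $(D^{\mS^k}_{\mH})^2\otimes\Id_V$, so the claim reduces to the case where $N$ is a point.

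Next, I would use the fiberwise isomorphism $\hat I_c=I_0^{-1}\circ I_c\colon \Si_{\mH_c^{k+1}\setminus\{p_0\}}\to \Si_{\mR^{k+1}\setminus\{0\}}$ introduced after \eqref{polar.spinor}. Since the spin action $a_1$ is induced via $\exp_{p_0}$ from the linear action on $\mR^{k+1}$, which is independent of $c$, the map $\hat I_c$ intertwines the two $\Spin(k+1)$-actions. Conjugating the reduced $\tilde F$ by $\hat I_c$ therefore yields a $\Spin(k+1)$-equivariant map $\bar F\colon \Si_{k+1}\to C^\infty(\mS^k,\Si_{\mR^{k+1}}|_{\mS^k})$, to which Corollary~\ref{cor_ex} applies and gives $(D^{\mS^k})^2\bar F(\hat I_c\psi_0)=\frac{k^2}{4}\bar F(\hat I_c\psi_0)$.

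The step I expect to require the most care is verifying that $D^{\mS^k}$ corresponds under $\hat I_c$, so that the identity for $\bar F$ can be pulled back. Recall $D^{\mS^k}=\sinh_c(r)D_\intr^{\mS^k}$, and note that $\{r\}\times\mS^k\subset\mH_c^{k+1}$ carries the induced metric of radius $\sinh_c(r)$ whereas the same sphere in $\mR^{k+1}$ has radius $r$. The intrinsic Dirac operator on each scales inversely with the radius, while the Clifford compatibility formula right after diagram~\eqref{polar.spinor} inserts a conformal factor $f(r)/r$ exactly on the spherical directions. The two scalings cancel, so both versions of $D^{\mS^k}$ identify under $\hat I_c$ with (a multiple of) the standard Dirac operator on the unit sphere $(\mS^k,\sigma^k)$. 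Transporting the identity for $\bar F$ back along $\hat I_c^{-1}$ then yields $(D^{\mS^k})^2\phi=\frac{k^2}{4}\phi$.
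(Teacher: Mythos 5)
Your proposal is correct and follows essentially the same route as the paper: both reduce Lemma~\ref{ex_equ_2} to Corollary~\ref{cor_ex} by (a) using $\hat I_c$ to pass from $\mH_c^{k+1}$ to $\mR^{k+1}$ (the paper packages this as the pullback map $J_{r,y}$) and (b) factoring out the $N$-direction via a $\Spin(k+1)$-invariant tensor decomposition of the spinor module. The only differences are cosmetic: you perform the two reductions in the opposite order and use the concrete product decomposition $\Si_{\mH_c^{k+1}}\otimes V$ with $V\cong\Si_N$ (resp.\ $\Si_N\oplus\Si_N$), whereas the paper applies $J_{r,y}$ first and then uses the canonical multiplicity space $V^{(\ep)}=\Hom_{\Spin(k+1)}(\Si_{k+1}^{(\ep)},\Si_{\mR^{k+1}\times N})$ together with functionals $\al\in(V^{(\ep)})^*$ to peel off the $\Si_{k+1}$-factor.
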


\begin{proof}
Note that the composition $\hat{I}_c:=I_0^{-1}\circ I_c$ where $I_c$ is defined as in 
\eqref{polar.spinor} maps the spinor bundle over 
$(\mH_c^{k+1}\setminus\{p_0\})\times N$ to the spinor bundle
over $(\mR^{k+1}\setminus\{0\})\times N$. This map preserves the 
intrinsic connection $\na^\intr$ and uniquely extends into $p_0\cong 0$.
Via pullback we then obtain a $\Spin(k+1)$-equivariant vector space isomorphism
 \begin{equation*}
C^\infty( \{r\}\times \mS^k,  \Si_{\mH_c^{k+1}\times N}|_{\{r\}\times
\mS^k\times \{y\}}) 
  \stackrel{J_{r,y}}{\to} C^\infty( \{r\}\times \mS^k, \Si_{\mR^{k+1}\times N}|_{\{r\}\times
\mS^k\times \{y\}}),\ 
  \psi\mapsto \hat{I}_c \circ\psi.
\end{equation*}
Moreover, we can write in the sense of $\Spin(k+1)$-modules $\Si_{\mR^{k+1}\times N}|_{(x, y)}\cong\Si_m\cong \Si_{k+1}\otimes V$ if $k$ is even
or $\Si_{\mR^{k+1}\times N}|_{(x, y)}\cong \Si_{k+1}^{(+)}\otimes V^{(+)}
\oplus \Si_{k+1}^{(-)}\otimes V^{(-)}$ if $k$ is odd, where 
$V^{(\ep)}:=\Hom_{\Spin (k+1)}(\Si_{k+1}^{(\ep)}, \Si_{\mR^{k+1}\times N}|_{(x, y)})$ is a vector space
which is independent of $x\in \mR^{k+1}$. 

Let now $k$ be odd. Then any $\al\in (V^{(\ep)})^*$ defines a map $\Si_{\mH_c^{k+1}\times N}|_{(x, y)}\to
\Si_{k+1}^{(\ep)}$.

  Let $A\colon \Si_{k+1}^{(\delta)}\to  \Si_{\mH^{k+1}_c\times N}|_{(p_0,y_0)}$
be a $\Spin(k+1)$-equivariant map. By composition we obtain for fixed  $A$, $\alpha$ and $\delta, \ep \in \{+, - \}$ a
$\Spin(k+1)$-equivariant map
\begin{align*}
  \Si_{k+1}^{(\delta)}\stackrel{A}{\to}&\ \Si_{\mH^{k+1}_c\times N}|_{(p_0,y_0)}
\stackrel{F}{\to}  C^\infty(\mS^k, \Si_{\mH_c^{k+1}\times N}|_{\mS^k\times \{y\}})\nonumber\\
   \stackrel{J_{r,y}}{\to}&   C^\infty(\mS^k, \Si_{\mR_c^{k+1}\times N}|_{\mS^k\times \{y\}}) \cong C^\infty(\mS^k, \Si_{\mR^{k+1}}|_{\mS^k}\otimes V)   \stackrel{\alpha}{\to} 
  C^\infty(\mS^k,\Si_{k+1}^{(\ep)}).
  \end{align*}
Let now $k$ be even. Then the argumentation is analogous to the one above when replacing $V^{(\ep)}$ by $V$ and  $\Si^{(\ep)}_{k+1}$ by  $\Si_{k+1}$.

Then the Lemma follows from Corollary~\ref{cor_ex} together with the identification by $J_{r,y}$.
\end{proof}

\begin{cor}\label{sym_Green}
 Let $G(q,p)$ be the Green function of the operator $D-\mu$, $\mu\not\in
\Spec_{L^2}^{\Mc}(D)$. Let $q=(r,x,y)\in \Mc^{m,k}$ be the polar coordinates when using
$p_0$ as the origin, $r>0$. Let $\psi_0\in \Si_{\Mc^{m,k}}|_{(p_0, y_0)}$, $y_0\in N$.
Set $\phi(q):=G(q,(p_0, y_0))\psi_0$. Then  
  $$ (D^{\mS^k})^2 \phi|_{\{r\}\times \mS^k\times \{y\}}=  \frac{k^2}4
\phi|_{\{r\}\times \mS^k\times \{y\}}.$$
\end{cor}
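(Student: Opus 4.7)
The plan is to reduce the statement to Lemma~\ref{ex_equ_2} by showing that the assignment
\[
F_{r,y}\colon \Si_{\Mc^{m,k}}|_{(p_0,y_0)} \longrightarrow C^\infty\!\left(\mS^k, \Si_{\Mc^{m,k}}|_{\{r\}\times\mS^k\times\{y\}}\right),\qquad \psi_0 \longmapsto \phi|_{\{r\}\times\mS^k\times\{y\}},
\]
where $\phi(q):=G(q,(p_0,y_0))\psi_0$, is $\Spin(k+1)$-equivariant. Smoothness of $F_{r,y}(\psi_0)$ on $\{r\}\times\mS^k\times\{y\}$ is automatic since $r>0$ keeps this slice off the diagonal, where the Green function is smooth. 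Once equivariance is known, Lemma~\ref{ex_equ_2} applied to $F_{r,y}$ yields $(D^{\mS^k})^2\phi = \tfrac{k^2}{4}\phi$ on that slice, which is exactly the claim.

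The core step is the equivariance of $F_{r,y}$. I would use the $\Spin(k+1)$-action $(a_1,a_2)$ from Subsection~\ref{spin_action}: the isometry $a_1(\gamma)\times\id$ fixes $(p_0,y_0)$, lifts to a bundle automorphism $a_2(\gamma)$ of $\Si_{\Mc^{m,k}}$ preserving the spin structure, and therefore commutes with $D-\mu$. Given $\psi_0$, the spinor $\phi(q)=G(q,(p_0,y_0))\psi_0$ satisfies $(D-\mu)\phi = \delta_{(p_0,y_0)}\psi_0$ distributionally. Pushing this equation forward by $\gamma$ and using that $a_1(\gamma)$ fixes $(p_0,y_0)$ shows that $a_2(\gamma)\phi\circ(a_1(\gamma)\times\id)^{-1}$ is the (unique) spinor representing $G(\cdot,(p_0,y_0))a_2(\gamma)\psi_0$. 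Uniqueness of the Green function (Proposition~\ref{prop.green.exists}, noting that $\Mc^{m,k}$ has bounded geometry and $\mu\notin\Spec_{L^2}^{\Mc}(D)$) then gives
\[
G(q,(p_0,y_0))\,a_2(\gamma)\psi_0 \;=\; a_2(\gamma)\,G\!\left(a_1(\gamma)^{-1}q,(p_0,y_0)\right)\psi_0,
\]
which is exactly $F_{r,y}(a_2(\gamma)\psi_0)(x) = a_2(\gamma)\,F_{r,y}(\psi_0)(a_1(\gamma)^{-1}x)$.

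The main obstacle is making the pushforward argument for the Dirac equation clean: one has to check that $a_2(\gamma)$ really intertwines $D-\mu$ (naturality of the Dirac operator under spin isometries) and that the $L^2$-decay condition distinguishing the Green function among distributional solutions of \eqref{def.green} is preserved by the isometric action, so that uniqueness applies. Both facts are standard; once they are in place, the corollary follows by invoking Lemma~\ref{ex_equ_2} for each fixed pair $(r,y)$ with $r>0$.
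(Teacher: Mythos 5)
Your proposal is correct and follows essentially the same route as the paper: both establish that $\psi_0\mapsto G(\cdot,(p_0,y_0))\psi_0$ is $\Spin(k+1)$-equivariant (via uniqueness of the Green function and naturality of $D$ under the spin-isometric $\Spin(k+1)$-action) and then invoke Lemma~\ref{ex_equ_2} on each slice $\{r\}\times\mS^k\times\{y\}$. You simply spell out the pushforward/uniqueness argument for equivariance in more detail than the paper does.
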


\begin{proof} 
Now we consider the Green function of the shifted Dirac operator $D-\mu$ for
$\mu\not\in \spec_{L^2}^{\Mc}(D)$ as a map 
  \[G(\,\cdot\,,(p_0,y_0))\colon \Sigma_{ \mH^{k+1}_c\times N}|_{(p_0,y_0)} \to 
   \Gamma( \Sigma_{\mH^{k+1}_c\times N\setminus\{(p_0,y_0)\}}).\]
It follows from the definition of $G$, in particular from its uniqueness, and
from the equivariance of $D$ under $\Spin(k+1)$ that $G(\,\cdot\,,(p_0,y_0))$ is
$\Spin(k+1)$-equivariant as well.
 In particular, $G(\,\cdot\,,(p_0,y_0))|_{\mS^k\times \{y\}}$ is a $\Spin(k+1)$-equivariant map as considered in
 Lemma~\ref{ex_equ_2}. Thus, together with Lemma~\ref{ex_equ_2}  the corollary follows.
\end{proof}

\section{Decay estimates for a fixed mode}\label{sec.decay}

Let $\mu\not\in\Spec_{L^2}^{\Mc^{m.k}}(D)$. Then, by
Theorem~\ref{prop.green.exists} there exists a unique Green function for
$D-\mu$. The goal of this section is to estimate the decay of this Green
function at infinity. For that, let $y=(p_0,y_N)\in \mH_c^{k+1}\times N$ and $\psi_0\in \Sigma_{\Mc}|_{y}$
be fixed. Set $\phi(x):=G(x,y)\psi_0$. The Definition of the Green
function, cf. \eqref{def.green}, implies that $\phi$ is an $L^2$-eigenspinor of $D$ to the eigenvalue $\mu$ outside a
neighbourhood of~$y$. Moreover,  by Corollary~\ref{sym_Green} we know that $\phi$
is in the spherical mode $\frac{k^2}{4}$.

Before starting to estimate the decay we give the following Remark:

\begin{rem}\label{specL2_MC}
The $L^2$-spectrum of the square of the Dirac operator on the product space
$M_1\times M_2$ is given by the set theoretic sum $\{\lambda_1^2 +\lambda_2^2\
|\ \lambda_i^2\in \Spec_{L^2}^{M_i}((D^{M_i})^2)\}$. This is seen immediately by
\eqref{Dirac_square_prod} and the spectral theorem.

 The $L^2$-spectrum of the Dirac operator on the hyperbolic space, and thus also
on $\mH_c^{k+1}$, is the whole real line, cf. \cite{bunke_91}. Let
$\lambda_0^2$, $\lambda_0\geq 0$, be the smallest eigenvalue of $(D^N)^2$.  Then
the above together with Lemma~\ref{spec_D2}  implies for $\Mc^{m,k}$ that \[
\Spec_{L^2}^{\Mc}(D^2)=[\lambda_0^2, \infty). \]

Together with Lemma~\ref{spec_or} and Example~\ref{spec_or_ex} \[
\Spec_{L^2}^{\Mc}(D)=(-\infty, -\lambda_0]\cup [\lambda_0, \infty).\] The
complement of this spectrum is denoted by $I_{\lambda_0}:=(\mC\setminus \mR)\cup
(-\lambda_0,\lambda_0)$.
\end{rem}

Now we decompose the space of spinors restricted to $\{r_1\}\times \mS^k\times
N$
into complex subspaces of minimal dimensions which are invariant under 
$D^N$, $\pa_r\cdot$, $D^{\mS^k}$. Such spaces have a basis of the form $\psi$, 
$\pa_r\cdot\psi$, $P\psi$, and $\pa_r\cdot P\psi$, where
$\psi$ satisfies $D^N\psi=\lambda \psi$, $(D^{\mS^k})^2\psi =\rho^2\psi$, 
$\rho \in \frac{k}2+ \mN_0$, and $P:=D^{\mS^k}/\rho$.
All these operations commute with parallel transport in $r$-direction,
so by applying parallel transport in $r$-direction
we obtain spinors
$\psi$, $\pa_r\cdot\psi$, $P\psi$, and 
$\pa_r\cdot P\psi$ on $\mR^+\times \mS^k\times N$ with similar relations,
and the space of all spinors of the form
\begin{equation}\label{spinor.form}
  \phi= \phi_1(r)\psi + \phi_2(r)\pa_r\cdot \psi +  \phi_3(r) P\psi + \phi_4(r)
\pa_r\cdot P\psi
\end{equation}
is preserved under the Dirac operator $D$ on $\Mc^{m,k}$ because of \eqref{D_dec}. 
Then the operators discussed above restricted to such a minimal subspace are 
represented by the matrices, cp. Lemma~\ref{commut.rel},
  \begin{equation*}
    D^N =
    \begin{pmatrix}
      \la & 0 & 0 & 0\\
       0 & -\la & 0 & 0\\
       0 & 0 & - \la & 0\\
       0 & 0 & 0 & \la
    \end{pmatrix}\qquad
    D^{\mS^k} =
    \begin{pmatrix}
       0 & 0 & \rho & 0\\
       0 & 0 & 0 & -\rho\\
       \rho & 0 & 0 & 0\\
       0 & -\rho & 0 & 0
    \end{pmatrix}\qquad
    \pa_r\cdot =
    \begin{pmatrix}
       0 &-1 & 0 & 0\\
       1 & 0 & 0 & 0 \\
       0 & 0 & 0 &-1\\
       0 & 0 & 1 & 0
    \end{pmatrix}
  \end{equation*}

\begin{prop}\label{decay_mode}
Assume that $\phi$ is an $L^2$-solution
to the equation $D\phi=\mu\phi$, $\mu\in I_{\lambda_0}$ on 
$(\Mc^{m,k})_{>r_0}:=(\mH_c^{k+1}\setminus B_{r_0}(p_0))\times N$. Assume that $\phi$
has the form given in \eqref{spinor.form} with parameters $\rho$ and $\lambda$.
Let $\kappa$ satisfy $\kappa^2=\la^2-\mu^2$, 
$\Re \kappa\geq 0$. Then  $\Re \kappa>0$.
Moreover, let  $\kappa_{\la_0}^2=\la_0^2-\mu^2$. If  $\Re \kappa_{\la_0}> 0$, then
there is are positive constants $C$ and $r_1$ such that
\begin{equation*}
|\phi(x)|\leq C \|\phi\|_{L^2((\Mc^{m,k})_{>r_0})}e^{(-ck/2-\Re \kappa_{\la_0})
d(x_1,p_0)}\ \ \mbox{for all }
    x=(x_1, x_2)\in (\mH_c^{k+1}\setminus B_{r_1}(p_0))\times N
 \end{equation*}
where $C$ is a constant that only depends on $c, k,r_1,\la_0, \mu$ and $\rho$
but not on $\lambda$. 
For $c=0$ an analogous estimate holds when replacing $e^{-(ck/2)d(x_1,p_0)}$ by $r^{-k/2}$ where $r=d(x_1,p_0)$.
\end{prop}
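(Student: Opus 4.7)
The plan is to reduce $D\phi=\mu\phi$ for spinors $\phi$ of the form \eqref{spinor.form} to a $4\times 4$ first order ODE system in the radial variable $r=d(x_1,p_0)$, and to recognise it as an integrable perturbation of a constant coefficient system whose eigenvalues are precisely $\pm\kappa$. Using the matrix representations of $D^N$, $D^{\mS^k}$, and $\pa_r\cdot$ displayed just before the proposition, and the fact that $\pa_r\cdot\tfrac{\na}{dr}$ acts only by differentiating the scalar coefficients $\phi_i(r)$ (since $\psi$, $\pa_r\cdot\psi$, $P\psi$, $\pa_r\cdot P\psi$ are $r$-parallel), the equation $D\phi=\mu\phi$ becomes a linear system for $(\phi_1,\phi_2,\phi_3,\phi_4)$ whose only $r$-dependent drift is $\tfrac{k}{2}\coth_c(r)\pa_r\cdot$. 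This drift is killed by the gauge $u_i(r):=\sinh_c(r)^{k/2}\phi_i(r)$, because $(\sinh_c r)'/\sinh_c(r)=\coth_c(r)$, and one obtains $U'(r)=A_0(\la,\mu)U(r)+R(r)U(r)$, where $A_0$ is constant with characteristic polynomial $(X^2-\kappa^2)^2$ and $R(r)=\rho B_0/\sinh_c(r)$ for some constant matrix $B_0$ independent of $\la$. The $L^2$ hypothesis on $\phi$, together with $\vo=\sinh_c(r)^k\,dr\,d\sigma^k\,\vo_N$ and the fact that $\phi$ lies in a fixed finite-dimensional angular mode, is equivalent to $u_i\in L^2((r_0,\infty),dr)$.

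The strict inequality $\Re\kappa>0$ is obtained by splitting $\mu\in I_{\la_0}=(\C\setminus\R)\cup(-\la_0,\la_0)$ into three cases: for $\mu\in(-\la_0,\la_0)\subset\R$ one has $\la^2-\mu^2\ge\la_0^2-\mu^2>0$; for $\mu=\i b$ with $b\ne 0$, $\la^2-\mu^2=\la^2+b^2>0$; and for $\mu=a+\i b$ with $a,b\ne 0$ the number $\la^2-\mu^2$ is not real, so under the convention $\Re\kappa\ge 0$ the inequality is automatically strict. For the decay estimate (in the case $c>0$, where $R(r)$ decays exponentially) I would apply Levinson's theorem, or equivalently a direct variation of constants calculation with Gronwall on $r\in[r_1,\infty)$ with $r_1$ chosen so that $\int_{r_1}^\infty\|R(s)\|\,ds$ is small: the perturbed system admits a fundamental basis asymptotic to $e^{\pm\kappa r}$ times constant $\pm\kappa$-eigenvectors of $A_0$, and the $L^2$ constraint forces the coefficients of the unstable $e^{+\kappa r}$ modes to vanish. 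This yields $|U(r)|\le C_0\,e^{-\Re\kappa\,r}\,\|U\|_{L^2(r_0,\infty)}$ for $r\ge r_1$, and undoing the gauge gives $|\phi(x)|\le C\,\sinh_c(r)^{-k/2}\,e^{-\Re\kappa\,r}\,\|\phi\|_{L^2}$. Since $\sinh_c(r)^{-k/2}\sim(c/2)^{k/2}e^{-ckr/2}$ for $c>0$ (respectively $r^{-k/2}$ for $c=0$, where $R(r)=\rho B_0/r$ is handled by the standard asymptotic theory of Schr\"odinger-type equations with $L^2_{\mathrm{loc}}$ potentials of order $O(1/r)$), this produces the claimed prefactor.

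The main obstacle is the uniformity of the constant $C$ with respect to the eigenvalue $\la$ of $D^N$. It reduces to two subclaims: (i) $\Re\kappa_\la\ge\Re\kappa_{\la_0}$ whenever $\la^2\ge\la_0^2$, so that the bound with $\Re\kappa_\la$ may be replaced by $\Re\kappa_{\la_0}$; (ii) the operator norm of the spectral projector of $A_0(\la,\mu)$ onto its stable subspace, together with the choice of $r_1$ and the Gronwall constant, can be chosen uniformly in $\la$ with $\la^2\ge\la_0^2$. Claim (i) follows from an elementary analysis of the principal branch of $w\mapsto\sqrt{w+t}-\sqrt{w}$ for $t\ge 0$, applied to $w=\la_0^2-\mu^2$ and $t=\la^2-\la_0^2$. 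For (ii), the uniformity of $r_1$ and of the Gronwall constant is immediate since $R(r)$ does not depend on $\la$; the delicate point is the uniform bound on the stable projector, which I would verify by writing the $\pm\kappa$-eigenvectors of $A_0(\la,\mu)$ explicitly in the chosen basis and checking that, after normalisation, the stable and unstable eigenspaces remain transversal with angle uniformly bounded below as $\la^2\to\infty$. Indeed, as $\la\to\infty$ one has $\kappa/\la\to 1$, and the eigenvectors of each $2\times 2$ block tend to the orthonormal pair $(1,\pm 1)/\sqrt{2}$, so transversality is even improving in this regime.
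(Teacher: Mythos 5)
Your outline—reduce to a $4\times 4$ first-order system in $r$, gauge away the $\coth_c$ drift, recognise the integrable perturbation of a constant-coefficient system with eigenvalues $\pm\kappa$, and select the decaying fundamental mode via the $L^2$ constraint—is the same as the paper's, and your elementary case analysis for $\Re\kappa>0$ correctly fills in a step the paper leaves implicit. However, there are two genuine gaps.

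The first concerns the uniformity in $\lambda$. Your intermediate claim $|U(r)|\le C_0\,e^{-\Re\kappa_\lambda r}\|U\|_{L^2(r_0,\infty)}$ with $C_0$ independent of $\lambda$ cannot be correct: if $U$ decays like $A_\lambda e^{-\Re\kappa_\lambda r}$ then $\|U\|_{L^2(r_0,\infty)}\sim A_\lambda e^{-\Re\kappa_\lambda r_0}/\sqrt{2\Re\kappa_\lambda}$, so the best possible ratio is $|U(r)|/\|U\|_{L^2}\sim \sqrt{2\Re\kappa_\lambda}\,e^{-\Re\kappa_\lambda(r-r_0)}$, and the factor $\sqrt{\Re\kappa_\lambda}$ grows without bound as $\lambda^2\to\infty$. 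Your subclaims (i) (exponent replacement) and (ii) (uniform projector norm) do not defeat this factor; (ii) only controls the constants coming from the Levinson/Gronwall step, not the one coming from converting the $L^2$ normalisation into a pointwise bound. The paper's mechanism is different and essential: it proves the two-sided bound $|\phi_\lambda(x)|\asymp |\wihat\Phi_\lambda(0)|\,e^{(-ck/2+\Re\kappa_\lambda)r}$, uses the lower bound plus the $L^2$ hypothesis to bound $|\wihat\Phi_\lambda(0)|$ by $\sqrt{-2\Re\kappa_\lambda}\,e^{-\Re\kappa_\lambda\tilde r_0}\|\phi\|_{L^2}$, and then exploits that the combined function $x\mapsto x\,e^{-2x(r-\tilde r_0)}$ is \emph{monotonically decreasing} on $[\,|\Re\kappa_{\lambda_0}|,\infty)$ once $r-\tilde r_0>1/(2|\Re\kappa_{\lambda_0}|)$, which is why the constant finally only depends on $\lambda_0$ and why $r_1$ must be chosen accordingly. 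Without this monotonicity step, the dependence on $\lambda$ is not removed.

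The second gap is the case $c=0$. You handle the drift with the gauge $U=\sinh_c(r)^{k/2}\Phi$ but keep the constant matrix $A_0$ in the system, so the remaining perturbation is $R(r)=\rho B/\sinh_c(r)$. For $c>0$ this is integrable and your Gronwall argument is fine, but for $c=0$ one has $R(r)=\rho B/r$, which is \emph{not} integrable on $(r_1,\infty)$, so the Levinson/Gronwall scheme you describe (choosing $r_1$ so that $\int_{r_1}^\infty\|R\|\,ds$ is small) does not apply; the appeal to ``standard asymptotic theory for $O(1/r)$ potentials'' papers over a real issue (Hartman--Wintner-type results need a dichotomy condition and give $\lambda$-dependent correction factors). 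The paper avoids this by additionally conjugating out the constant part $A$, e.g.\ via $t^{A/c}$ for $c>0$ and $e^{-Ar}$ for $c=0$: after that gauge, the effective perturbation carries a factor $e^{-2Ar}$ (resp.\ $t^{2A/c}$), which decays like $e^{-2\Re\kappa_\lambda r}$, so even the $1/r$ prefactor is beaten and the Gronwall integral converges. Since your gauge retains $A_0$, the $c=0$ case in your write-up is not covered, and the faster decay of the effective perturbation for larger $|\lambda|$ — which the paper uses to make $\delta$ and $\tilde r_0$ $\lambda$-independent — is also lost.
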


\begin{proof}
By assumption $\phi$ can be written as in \eqref{spinor.form}.  We view the
components of $\phi$ as a vector in $\mC^4$, 
i.e., $\Phi(r):=(\phi_1(r),\phi_2(r),\phi_3(r),\phi_4(r))$. So by \eqref{D_dec} the following
equation is equivalent to $D\phi=\mu\phi$:
  \begin{equation*}
     0 =
    \begin{pmatrix}
       \la-\mu & - \frac{k}2 \coth_c r & \frac\rho{\sinh_c r} & 0\\
       \frac{k}2 \coth_c r  & -\la-\mu & 0 & -\frac\rho{\sinh_c r} \\
       \frac\rho{\sinh_c r} & 0 & -\la-\mu &  -\frac{k}2 \coth_c r \\
       0 &- \frac\rho{\sinh_c r} & \frac{k}2 \coth_c r  & \la-\mu
    \end{pmatrix}
    \Phi(r) +
   \begin{pmatrix}
       0 &-1 & 0 & 0\\
       1 & 0 & 0 & 0 \\
       0 & 0 & 0 &-1\\
       0 & 0 & 1 & 0
    \end{pmatrix}
    \Phi'(r).
  \end{equation*}
Thus using $\1$ for the identity matrix and setting 
\begin{equation*}
   A:=     
   \begin{pmatrix}
       0  & \la+\mu & 0 & 0 \\
       \la-\mu & 0 & 0 & 0\\
       0 & 0 & 0 & -\la+\mu\\
       0 & 0 & -\la-\mu &  0 \\
   \end{pmatrix},
   \qquad 
  B:=   \begin{pmatrix}
       0  & 0 & 0 & 1 \\
       0  & 0 & 1 & 0 \\
       0  & 1 & 0 & 0 \\
       1  & 0 & 0 & 0 \\
   \end{pmatrix},
\end{equation*}
we obtain

\begin{equation*}
   \Phi'(r)=  \left(A  -\frac{k\coth_c r}2 \1 +  \frac\rho{\sinh_c r} B\right)
   \Phi(r).
\end{equation*}

We start with the case $c\neq 0$: 
We now substitute $t=e^{-cr}$, $\witi\Phi(t)=\Phi(-c^{-1}\log t)$. Then
\begin{align*}
  \frac{d\witi\Phi}{dt} &=  \left(-\frac{1}{ct} A + \frac{k(1+t^2)}{2(t-t^3)}\1
+ \frac{2\rho}{t^2-1} B\right) \witi\Phi.
\end{align*}
Such singular ordinary differential equations are well understood, 
see \cite[Chap. 4, Sec. 1--3]{coddington.levinson:55}. In particular, $t=0$
is a singular point of first kind, and \cite[Chap. 4 Thm
2,1]{coddington.levinson:55} yields that $t=0$ is a so-called ``regular singular
point'', 
and the associated theory applies. However, in our situation
it is more efficient to analyse the equation directly.

We set $h(t):= \bigl(\log t -\log (t+1) -\log (1-t)\bigr) k/2$, then 
$h'(t)=  \frac{k(1+t^2)}{2(t-t^3)}$. We define 
  $$\wihat\Phi(t):=e^{-h(t)} t^{A/c} \witi\Phi(t),$$
and we calculate
\begin{align*}
  \frac{d\wihat\Phi}{dt} &= - \frac{2\rho}{1-t^2} t^{A/c} B t^{-A/c}
\wihat\Phi.
\end{align*}
As $B$ anticommutes with $A$, we have $t^{A/c} B t^{-{A/c}}=
t^{2{A/c}}B$, 
and as $B$ is an isometry of $\mC^4$, we see that 
\begin{equation*}
\|t^{A/c} B t^{-{A/c}}\|= t^{2|\Re \kappa_+|/c} 
\end{equation*}
where $\|\,.\,\|$ denotes the operator norm and where 
  $$\kappa_\pm:=\pm \sqrt{\la^2-\mu^2}.$$
are the (complex) eigenvalues of $A$.
It follows that for $0<t<1/2$
\begin{equation*}
  \left|\frac{d}{dt}\log|\wihat\Phi(t)| \right| \leq
\frac{|\frac{d}{dt}\wihat\Phi|}{|\wihat\Phi|} \leq 
\frac{2\rho}{1-t^2}\|t^{A/c} B t^{-{A/c}} \| \leq 3 \rho
t^{2|\Re \kappa_+|/c}.
\end{equation*}
Thus the solution extends to $t=0$, and 
\begin{equation*}
   |\wihat\Phi(0)| e^{-3\rho t^{2|\Re \kappa_+|/c}}\leq
|\wihat\Phi(t)|\leq |\wihat\Phi(0)| e^{3 \rho t^{2|\Re \kappa_+|/c}}.
\end{equation*}
This estimate yields explicit asymptotic control for $\wihat\Phi(t)$, and thus
for $\phi$. Namely, assume $cr_0\geq 1> \log 2$, there are two fundamental
solutions
$\phi_\pm$ of $D\phi_\pm=\mu\phi_\pm$ such that $\wihat\Phi_\pm(0)$ is an
eigenvector of $A$ to the eigenvalue $\kappa_\pm$ and such that 
\begin{equation*}
 e^{-3 \rho e^{-2|\Re \kappa_+|r}} e^{\Re \kappa_\pm r} e^{h(e^{-cr})}   \leq
\frac{|\phi_\pm (x)|}{|\wihat\Phi_\pm(0)|}\leq  e^{3 \rho e^{-2|\Re 
\kappa_+|r}}e^{\Re \kappa_\pm r}e^{h(e^{-cr})}   \qquad r:=d(x_1,p_0)>r_0.
\end{equation*}
This implies that for every $\delta\in (0,1)$ there is $\tilde{r}_0$ such that
\begin{equation}\label{decay_1}
(1-\delta)  e^{(-(ck/2) +\Re \kappa_\pm) r}  \leq \frac{|\phi_\pm
(x)|}{|\wihat\Phi_\pm(0)|}\leq (1+\delta)  e^{(-(ck/2)+ \Re \kappa_\pm) r}   
\qquad r:=d(x_1,p_0)>\tilde{r}_0.
\end{equation}
From 
 $$ \int_{\tilde{r}_0}^\infty |\Phi(r)|^2 (\sinh_c r)^k\,\d r\leq
\frac{\|\phi\|_{L^2}^2}{\vol(\mS^k)\vol(N)}$$
and the left inequality of \eqref{decay_1}
we see that $\phi_\pm$ is in $L^2((\Mc^{m,k})_{>\tilde{r}_0})$ if and only of
$\Re \kappa_\pm<0$. In the following we call this $\kappa_\pm$ just
$\kappa_\lambda$  and also replace the $\pm$ index by $\lambda$ in all other
occurrences. We note that $|\Re \kappa_\la|$ is increasing in $|\lambda|$. Thus,
$\delta$ and $\tilde{r}_0$ from above can be chosen independent on $\lambda$.

Next, we multiply the first inequality of  \eqref{decay_1} by
$|\wihat\Phi_\lambda (0)|$ and then integrate its square:

\begin{align*}\frac{\|\phi\|_{L^2}^2}{\vol(\mS^k\times N)}\geq &
(1-\delta)^2|\wihat\Phi_\lambda (0)|^2  \int_{\tilde{r}_0}^\infty e^{(-ck +2\Re
\kappa_\lambda) r} (\sinh_c r)^k\,\d r.
\end{align*}

Hence, we obtain an upper bound  
\begin{align*}|\wihat\Phi_\lambda(0)|^2\leq C_1^2(1-\delta)^{-2}\Vert
\phi\Vert^2_{L^2((\Mc^{m,k})_{>\tilde{r}_0})} \left(\frac{e^{2\Re \kappa_\la
\tilde{r}_0}}{-2\Re \kappa_\la}\right)^{-1}
\end{align*}
where $C_1$ is a constant independent on $\lambda$.

Using this again with the right inequality of \eqref{decay_1} we get for all $x$
with $r=\dist(x, p_0)> \tilde{r}_0$ that 
\begin{align} |\phi(x)|\leq &\frac{1+\delta}{1-\delta}
C_1\|\phi\|_{L^2((\Mc^{m,k})_{>\tilde{r}_0})} e^{(-ck/2 +\Re  \kappa_{\lambda}) r} 
\left(\frac{e^{2\Re \kappa_\la \tilde{r}_0}}{-2\Re
\kappa_\la}\right)^{-1/2}\nonumber\\
 \leq & C_1 (-2\Re \kappa_\la)^\frac{1}{2} \|\phi\|_{L^2((\Mc^{m,k})_{>\tilde{r}_0})}
e^{-ckr/2 +\Re  \kappa_{\lambda} (r-\tilde{r}_0)}\label{equation_1}.
\end{align}
For $r>\tilde{r}_0$ we see that 
$(-2\Re \kappa_\la)e^{2\Re  \kappa_{\lambda} (r-\tilde{r}_0)}$ is monotonically
decreasing in $|\Re \kappa_\la|$, and we obtain from \eqref{equation_1}
\begin{align*} |\phi(x)|\leq & C_1 (-2\Re \kappa_{\la_0})^\frac{1}{2}
\|\phi\|_{L^2((\Mc^{m,k})_{>\tilde{r}_0})} e^{-ckr/2 +\Re  \kappa_{\lambda_0}
(r-\tilde{r}_0)} \\
\leq & C \|\phi\|_{L^2((\Mc^{m,k})_{>\tilde{r}_0})} e^{(-ck/2 +\Re 
\kappa_{\lambda_0}) r}
\end{align*}

for all $x$ with $r=d(x_1,p_0)>\tilde{r}_0$.
Here, $C$ can be chosen such that it only depends on $c$, $k$, $\tilde{r}_0$,
$\lambda_0$, $\mu$ and  $\rho$ but not on $\lambda$. 
Note that the $\kappa$ in the claim is simply $-\kappa_{\lambda_0}$.

It remains the case $c=0$:
\[ \Phi'(r)=\left( A-\frac{k}{2r}\1+\frac{\rho}{r}B \right)\Phi(r).\]
Set $\hat{\Phi}(r)=r^{\frac{k}{2}}e^{-Ar}\Phi(r)$. Then, $\hat{\Phi}'(r)= \frac{\rho}{r} e^{-Ar} B e^{Ar} \hat{\Phi}=\frac{\rho}{r} e^{-2Ar} B \hat{\Phi} $.
Then we can proceed as above and obtain the claim.
\end{proof}

In order to estimate the decay of  $\phi(x)=G(x,y)\psi_0$, $\psi_0\in \Si_{\Mc^{m,k}}|_{y}$ at infinity we will decompose $\phi$ into its modes in $\mS^k$ and $N$ direction, respectively. Lemma~\ref{decay_mode} provides an estimate of the decay of each mode which is independent of the mode in direction of $N$. Moreover, from Corollary~\ref{sym_Green} we know that $\phi$ has spherical mode $\frac{k^2}{4}$. Thus, we obtain a decay estimate for $\phi$:

\begin{lemma}\label{decay_Green}
Let $\mu\not\in \Spec_{L^2}^{\Mc}(D)$,  and let $G$ be the unique Green function
of $D-\mu$. We set $M_y(r):= \{x\in \Mc\ |\ \dist(x, N^y)
=r\}$ where $N^y = \{p_0\}\times N$  and $y=(p_0,y_N)\in \mH_c^{k+1}\times N$. Let $\kappa$ satisfy  $\kappa^2=\la_0^2-\mu^2$
and 
$\Re\kappa\geq 0$. Then for all $\ep>0$ and $r_0$ sufficiently large there is
a constant $C>0$ independent on $y$ such that
\[\int_{M_y(r)} |G(x,y)|^2 dx\leq Ce^{-2r\, \Re\!\kappa}\ \text{\ for\ all\ }
r>r_0.\]
\end{lemma}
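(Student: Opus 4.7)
The plan is to combine the pointwise decay from Proposition~\ref{decay_mode}, which treats a single spherical-and-$D^N$ mode, with a mode decomposition of $\phi(x):=G(x,y)\psi_0$, and to exploit the fact that the exponential volume growth of the spheres $M_y(r)$ in hyperbolic space exactly cancels the $e^{-ckr/2}$ factor produced by the Proposition. Independence of the final constant from $y$ will come from the transitive action of the hyperbolic isometry group on $\mH_c^{k+1}$ (which reduces to $y=(p_0,y_N)$) together with compactness of $N$.

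I would fix a unit $\psi_0\in\Sigma_{\Mc}|_y$ and set $\phi:=G(\cdot,y)\psi_0$; outside $B_{r_0}(y)$ this is a smooth $L^2$-solution of $(D-\mu)\phi=0$, and Corollary~\ref{sym_Green} forces $(D^{\mS^k})^2\phi=(k^2/4)\phi$, so $\rho=k/2$ throughout. Choosing an $L^2(N,\Sigma_N)$-orthonormal basis $\{\chi_j\}$ of $D^N$-eigenspinors with eigenvalues $\lambda_j$, $|\lambda_j|\geq \lambda_0$, the product spin structure yields an expansion $\phi=\sum_j\phi_{\lambda_j}$ in which each summand is a finite sum of spinors of the form \eqref{spinor.form} with parameters $(\rho,\lambda)=(k/2,\lambda_j)$. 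Orthonormality of $\{\chi_j\}$ on $N$ makes the $\phi_{\lambda_j}$ pairwise $L^2$-orthogonal on every slice $M_y(r)=\mS^k_{f(r)}\times N$ by Fubini, and Parseval delivers $\sum_j\|\phi_{\lambda_j}\|_{L^2}^2=\|\phi\|_{L^2}^2$.

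Applying Proposition~\ref{decay_mode} to each $\phi_{\lambda_j}$ -- whose constant depends only on $c,k,r_1,\lambda_0,\mu,\rho=k/2$ and is therefore independent of $\lambda_j$, and whose decay rate can be uniformly replaced by the slowest one, $\kappa=\kappa_{\lambda_0}$, thanks to monotonicity of $|\Re\kappa_\lambda|$ in $|\lambda|$ -- one obtains $|\phi_{\lambda_j}(x)|\leq C\|\phi_{\lambda_j}\|_{L^2} e^{(-ck/2-\Re\kappa)r}$ for $r=d(x_1,p_0)$ past some $r_1$. Squaring and integrating over $M_y(r)$, whose volume is $\vol(\mS^k)\vol(N)f(r)^k\lesssim e^{ckr}$ for $c>0$ (polynomial for $c=0$), the factor $e^{-ckr}$ from squaring is absorbed by the volume growth, leaving
\[
\int_{M_y(r)}|\phi_{\lambda_j}|^2\,dx \leq C'\,\|\phi_{\lambda_j}\|_{L^2(\Mc\setminus B_{r_0}(y))}^2\, e^{-2r\,\Re\kappa}.
\]
Summing over $j$ via Parseval produces the analogous bound with $\|\phi\|_{L^2(\Mc\setminus B_{r_0}(y))}^2$ on the right, and running $\psi_0$ over an orthonormal frame of $\Sigma_{\Mc}|_y$ yields the desired estimate for $|G(x,y)|^2$. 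The $y$-uniformity of $\|\phi\|_{L^2(\Mc\setminus B_{r_0}(y))}$ follows from isometry equivariance of $G$ (reducing to $y\in\{p_0\}\times N$) and compactness of $N$, using the decomposition $G=H-H'$ from the proof of Proposition~\ref{prop.green.exists} to handle the parametrix part $H$ locally and the smooth part $H'=(D-\mu)^{-1}F$ via the $L^2$-boundedness of $(D-\mu)^{-1}$.

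The main obstacle is arranging the mode decomposition so that Proposition~\ref{decay_mode} delivers a constant uniform in $\lambda_j$ and so that slice-by-slice $L^2$-orthogonality is genuinely available; the first is handed to us by the monotonicity already built into the Proposition, the second by the product structure of the spin bundle on $\Mc$. The geometric input is that hyperbolic volume growth $e^{ckr}$ precisely matches the $e^{-ckr}$ from squaring the pointwise bound, producing the sharp exponent $2\,\Re\kappa$.
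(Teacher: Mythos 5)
Your proposal is correct and follows essentially the same route as the paper: fix $\psi_0$, decompose $\phi=G(\cdot,y)\psi_0$ into modes of the form \eqref{spinor.form}, use Corollary~\ref{sym_Green} to pin the spherical parameter to $\rho=k/2$, apply Proposition~\ref{decay_mode} mode by mode (its constant and decay rate are already uniform in $\lambda$), integrate the squared pointwise bound over the slice $M_y(r)$ so that $\sinh_c^k(r)$ cancels $e^{-ckr}$, and close via Parseval. The only thing you add beyond what the paper writes out is an explicit justification that the resulting constant is independent of $y$ — the paper's proof tacitly leaves this to the homogeneity of $\mH_c^{k+1}$ and compactness of $N$, which is exactly the argument you sketch.
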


\begin{proof}
 Let $\psi_0 \in \Si_{\Mc}|_{y}$. Set $\phi(x):=G(x,y)\psi_0$. Then, $\phi$
decomposes into a sum of spinors  $\phi_{\rho^2,\lambda, \nu}$   of the form
\eqref{spinor.form} with
$(D^{\mS^k})^2\phi_{\rho^2,\lambda, \nu}=\rho^2\phi_{\rho^2,\lambda, \nu}$ and $D^N\phi_{\rho^2,\lambda, \nu}=\lambda\phi_{\rho^2,\lambda, \nu}$, respectively, and as the multiplicities of the combined eigenspaces might be larger than one the index $\nu$ runs through a basis. By Corollary~\ref{sym_Green} $\rho^2$ may only take the value
$\frac{k^2}{4}$. Thus, $\int_{M_y(r)} |\phi(x)|^2 dx = \sum_{\lambda, \nu} \Vert \phi_{k^2/4,
\lambda, \nu}\Vert_{L^2(M_y(r))}^2$.

Together with Proposition \ref{decay_mode} we obtain for $c\neq 0$
\begin{align*}
 \int_{M_y(r)} |\phi(x)|^2 dx &\leq  \sum_{\lambda, \nu} C \Vert \phi_{k^2/4,
\lambda, \nu}\Vert_{L^2((\Mc^{m,k})_{>r_0})}^2 e^{(-ck-2 \Re \kappa)r} \sinh_c^k(r)\\
&\leq C' e^{-2r \Re \kappa } \sum_{\lambda, \nu}  \Vert \phi_{k^2/4, \lambda,
\nu} \Vert_{L^2((\Mc^{m,k})_{>r_0})}^2\\
&\leq C' e^{-2r\, \Re\!\kappa} \Vert \phi \Vert_{L^2((\Mc^{m,k})_{>r_0})}^2.
\end{align*}

The case $c=0$ follows analogously.
 \end{proof}

%
%

\section{ Decomposition of the Green function}\label{sec_Green}

We decompose the Green function $G$ of the shifted Dirac operator $D-\mu$ on
$M=\Mc^{m,k}$ into a singular part and a smoothing operator.  Both operators
will be shown to be bounded operators from $L^p$ to $L^p$ for all $p\in
[1,\infty]$. 

At first we
choose a smooth cut-off function $\chi\colon \R\to [0,1]$ with $\supp\,
\chi\subset [-R,R]$ and $\chi_{|_{(-R/2,R/2)}}\equiv 1$. Let $\rho\colon  M\times M\to [0,1]$ be given by $\rho(x,y)=\chi
(\dist_{\mH_c^{k+1}}(\pi_{\mH}(x),\pi_{\mH}(y)))$.

Let now 
\[G_1(x,y):=\rho(x,y)G(x,y)\ {\rm and\ } G_2(x,y):=G(x,y)-G_1(x,y).\]
Then $G_2$ is zero on a neighbourhood of the diagonal, 
and thus smooth everywhere. The
singular part is only contained in $G_1$.

\begin{prop} \label{Green_sing}
Let $M=\Mc^{m,k}$ and $G_1$ be as defined above.  Then, for all $1\leq p\leq
\infty$ the map $P_1\colon \phi\mapsto \int_M G_1(.,y)\phi(y)\d y$ defines a bounded
operator from $L^p$ to $L^p$. 
\end{prop}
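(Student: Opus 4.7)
The plan is to establish the boundedness via a Schur-type estimate combined with the homogeneity of $\mH_c^{k+1}$. Concretely, I would show
\[
  A \;:=\; \sup_{x\in M}\int_M |G_1(x,y)|\,\d y \;<\;\infty
  \quad\text{and}\quad
  B \;:=\; \sup_{y\in M}\int_M |G_1(x,y)|\,\d x \;<\;\infty.
\]
Once these are in hand, the operator $P_1$ is bounded from $L^1$ to $L^1$ with norm at most $B$ and from $L^\infty$ to $L^\infty$ with norm at most $A$. Since $C_c^\infty(\Si_M)$ is dense in both spaces, the Riesz--Thorin Interpolation Theorem \ref{RTint} (or a direct Schur argument) then yields boundedness on $L^p$ for every $p\in[1,\infty]$.

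For the first supremum, I would exploit the homogeneity of the hyperbolic factor. The full isometry group $\mathrm{Iso}(\mH_c^{k+1})$ acts transitively, and each isometry $\gamma$ lifts (uniquely up to sign, since $\mH_c^{k+1}$ is simply connected) to a bundle automorphism of $\Si_{\mH_c^{k+1}}$. Combining this with $\id_N$ gives an isometry of $M=\mH_c^{k+1}\times N$ which lifts to the spin structure and commutes with $D-\mu$. By the uniqueness part of Proposition \ref{prop.green.exists}, the Green function is equivariant under such lifts, so the pointwise norm satisfies $|G((\gamma\times\id)x,(\gamma\times\id)y)|=|G(x,y)|$. Since the cutoff $\rho$ also depends only on the hyperbolic distance of the projections, the function $x\mapsto \int_M |G_1(x,y)|\,\d y$ is invariant under $\mathrm{Iso}(\mH_c^{k+1})\times\id_N$, hence depends only on the $N$-component of $x$. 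Continuity in $x$ together with compactness of $N$ then produces the uniform bound $A<\infty$, provided the integral is finite at one point.

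To see finiteness of the integral at a fixed $x_0=(p_0,x_N)$: the support of $G_1(x_0,\cdot)$ lies in $B_R(p_0)\times N$, a set of finite volume. On this set $G$ is smooth away from the diagonal, so the only issue is near $x_0$. The parametrix construction recalled before Proposition \ref{Green-compact} and used in Proposition \ref{prop.green.exists} yields the standard pointwise bound $|G(x_0,y)|\le C\,\dist_M(x_0,y)^{-(m-1)}$ for $y$ in a small ball around $x_0$; this singularity is integrable in dimension $m$ because $\int_0^R r^{-(m-1)} r^{m-1}\,\d r = R<\infty$. Thus $A<\infty$. The bound $B<\infty$ follows by the same argument applied to $G_{D-\bar\mu}(y,x)$, using Lemma \ref{adj.green} and the symmetry of $\rho$ under swapping its arguments.

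The main obstacle is the spin-equivariance step: one must verify that the isometry action on $\mH_c^{k+1}$ really lifts to $\Si_M$ and intertwines $D-\mu$, and that this yields invariance of $|G|$ via uniqueness. Once this is granted, the rest is a routine Schur estimate together with the standard fundamental-solution singularity of a first-order elliptic operator.
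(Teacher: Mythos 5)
Your proposal is correct but takes a genuinely different route from the paper. You prove the $L^1\!\to\! L^1$ and $L^\infty\!\to\! L^\infty$ bounds by a Schur test on the kernel: homogeneity of $\mH_c^{k+1}$ (via spin-lifted isometries and uniqueness of the Green function) reduces the row and column sums of $|G_1|$ to functions on the compact factor $N$, and finiteness at one point follows from the standard first-order elliptic singularity $|G(x_0,y)|\lesssim\dist(x_0,y)^{-(m-1)}$, which is integrable in dimension $m$; Riesz--Thorin then covers all $p$. The paper instead never writes down an explicit pointwise singularity estimate: it cuts $\phi$ into pieces supported in balls $B_{2R}(x_i)\times N$ via the Covering Lemma~\ref{cover}, observes that $P_1\phi_i$ is supported in $B_{3R}(x_i)\times N$, embeds $B_{3R}(p_0)$ isometrically into a closed manifold $M_R$ with $D^{M_R\times N}-\mu$ invertible (Proposition~\ref{app_C}), controls each piece by the $L^p$-norm $C_R(p)$ of the compact resolvent, and reassembles using the uniform local finiteness of the cover together with the same spin-lifted hyperbolic isometries you use. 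Your argument is conceptually cleaner and more classical once the $\dist^{-(m-1)}$ bound is granted, but that bound is not actually stated in the paper (the parametrix lemma only asserts $L^1$ integrability), so you would need to extract it from the Lawson--Michelsohn symbol-calculus construction; the paper's route trades that kernel estimate for the covering-and-embedding machinery, which sidesteps any explicit asymptotics of $G$ near the diagonal. Both are valid, and both rest on the same key structural input, namely the spin-equivariance of $G$ under the transitive $\mathrm{Iso}(\mH_c^{k+1})$-action, which you correctly flag as the one step requiring care.
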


\begin{proof} 
We start with a smooth spinor $\phi$ compactly supported in $
B_{2R}(0)\times N\subset M$. For such a $\phi$ the spinor $P_1\phi$ is supported in
$B_{3R}(0) \times N\subset M$. We embed $B_{3R}(0)$ isometrically into a closed
Riemannian manifold $M_R$. Let $M_R \times N$. The metric on $M_R$ can
be chosen such that $D^{M_R \times N}-\mu$ is invertible, cf. Proposition~\ref{app_C}. The norm of $(D^{M_R \times N}-\mu)^{-1}\colon  L^p
\to L^p$ is denoted by $C_R(p)$.\\

For $p<\infty$ we estimate

\begin{align*}
\int_{M} |P_1\phi|^p\,\d x&= \int_{M} \left| \int_M G_1(x,y)\phi(y)\d y\right|^p
\d x \\
&\leq \int_{ B_{3R}(p_0)\times N} \left| \int_{B_{2R}(p_0)\times N} G(x,y)\phi(y)\, \d
y\right|^p\d x \leq \int_{M_R \times N} |(D^{M_R \times N}-\mu)^{-1}\phi|^p\, \d x\\
& \leq C_R(p)^p \int_{M_R \times N} |\phi|^p\, \d x =C_R(p)^p\,  \Vert
\phi\Vert_{L^p}^p.
\end{align*}

Next we want to consider arbitrary $\phi\in L^p(M,\Si_M)$, $p<\infty$. Then
$C_c^\infty(M,\Si_M)$ is dense in $L^p(M,\Si_M)$, and it suffices to consider
$\phi\in C_c^\infty(M,\Si_M)$. 
Choose points $(x_i)_{i\in I}\subset \mH_c^{k+1}$ as in Lemma~\ref{cover}. Then
$(B_{2R}(x_i)\times N)_{i \in I}$ and $(B_{3R}(x_i)\times N)_{i \in I}$ both
cover $\Mc^{m,k}$ uniformly locally finite. We denote the multiplicity of the second
cover by $L$ and choose a partition of unity $\eta_{i}$ subordinated to
$(B_{2R}(x_i)\times N)_{i\in I}$.

Let $\phi=\sum \phi_{i}$ where $\phi_{i}=\eta_{i}\phi\in C_c^\infty(B_{2R}(x_i) \times N,\Si_M)$.
Hence, $P_1\phi_i\in C_c^\infty (B_{3R} (x_i)\times N,
\Si_M)$.
Moreover, let $\bar{f}_i\colon  M\to M$ be given by $\bar{f}_i=(\Id, f_i)$ where $f_i$
is an isometry of $\mH_c^{k+1}$ that maps $x_i$ to $p_0$. We choose a lift of
$\tilde{f}_i$ to an isometry on the spinor bundle.  Due to the homogeneity of $\mH_c^{k+1}$ we have
$P_1(\tilde{f}_i\circ \phi\circ \bar{f}_i^{-1})=\tilde{f}_i\circ (P_1\phi)\circ \bar{f}_i^{-1}$.

Then, by triangle inequality and H\"older inequality and since for fixed $x$  the value
$P_1\phi_i(x)$ is nonzero for at most $L$ spinors $\phi_i$, we have
\[|P_1\phi (x)|^p= \left|\sum_{i}  P_1 \phi_i (x)\right|^p\leq  \left|\sum_{i} 
|P_1 \phi_i (x)|\right|^p\leq L^{p-1} \sum_i |P_1 \phi_i(x)|^p.\] Thus, we
obtain 
\begin{align*}
\Vert P_1\phi\Vert_{L^p(M)}^p &  \leq L^{p-1} \sum_{i} \Vert P_1 \phi_i
\Vert_{L^p(B_{3R}(x_i)\times N)}^p= L^{p-1}\sum_{i} \Vert P_1(\tilde{f}_i\circ \phi_i\circ \bar{f}_i^{-1})
\Vert_{L^p(B_{3R}(p_0)\times N)}^p\\
 &\leq L^{p-1} C_R(p)^p \sum_i \Vert
\tilde{f}_i\circ \phi_i\circ \bar{f}_i^{-1}\Vert_{L^p(B_{3R}(p_0)\times N)}^p\\
&=L^{p-1} C_R(p)^p \sum_i \Vert \phi_i\Vert_{L^p(B_{3R}(x_i)\times N)}^p\\
 &\leq
L^{p} C_R(p)^p\Vert
\phi\Vert_{L^p(M)}^p.\end{align*}
 
 It remains the case $p=\infty$. Let $\eta_i$ as above, and let $\phi\in L^\infty$. We decompose again $\phi=\sum \phi_i$
where $\phi_i=\eta_i\phi$ is compactly supported. Then, we obtain as
above that
 \begin{align*}
  \Vert P_1\phi\Vert_{L^\infty(M)}\leq \sum_i \Vert
P_1\phi_i\Vert_{L^\infty(B_{3R}(x_i)\times N)}\leq C \sum_i \Vert
\phi_i\Vert_{L^\infty(B_{3R}(x_i)\times N)}\leq C L \Vert
\phi\Vert_{L^\infty(M)}.
 \end{align*}
 \end{proof}
We now turn to the off-diagonal part $G_2$.\\
 
 Note that $\mH_c^{k+1}$ is homogeneous for all $c$. In particular, the
representation of the metric in polar coordinates -- $\d
r^2+\sinh_c^2(r)\,\sigma^k$ (cf. Section~\ref{sec_H}) -- is independent of the chosen origin of the
polar coordinates on $\mH_c^{k+1}$. We set $M_y(r):= \{x\in \Mc^{m,k}\ |\
\dist(x, N^y)
=r\}$ where $N^y = \{y_1\}\times N$  where $y=(y_1,y_2)\in \mH_c^{k+1}\times N$. Then, the volume
 $\vol (M_y(r))=f(r)^k\vol(N)\vol(S^k)=\sinh_c^k(r)\vol(N)\vol(S^k)$ is
independent of $y$. We will subsequently leave out the $y$ in the notation
and write $\vol (M(r))$.
 
\begin{prop}\label{Green_smooth}  Using the notations from above, 
assume that there are constants $C, \rho>0$ with 
\[ \int_{M_y(r)} |G_2(x,y)|^2\d x\leq C e^{-2\rho r}\quad \text{for all\ }
r>0.\]
Let $p=1$ and $p=\infty$. Then, for $\rho >\frac{ck}{2}$ the operator $P_2\colon  \phi
\mapsto \int_M G_2(.,y)\phi(y)\d y$ from $L^p$ to $L^p$ is bounded.  
\end{prop}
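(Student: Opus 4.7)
The strategy is Schur's test: the integral operator $P_2$ with kernel $G_2$ is bounded on $L^1$ if $\sup_y \int_M |G_2(x,y)|\,\d x < \infty$ and bounded on $L^\infty$ if $\sup_x \int_M |G_2(x,y)|\,\d y < \infty$. Both suprema will be controlled by a shell-decomposition argument combined with the hypothesized $L^2$-decay.

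For the $L^1$-bound I fix $y=(y_1,y_N)$ and observe that the function $x\mapsto\dist(x,N^y)=\dist_{\mH_c^{k+1}}(\pi_{\mH}(x),y_1)$ has unit gradient off $N^y$ since the metric on $\Mc^{m,k}$ is a product. The coarea formula then gives
\[\int_M |G_2(x,y)|\,\d x = \int_0^\infty \int_{M_y(r)} |G_2(x,y)|\,\d x\,\d r,\]
and since the cutoff $\rho(x,y)$ is identically $1$ whenever $\dist_{\mH_c^{k+1}}(\pi_{\mH}(x),\pi_{\mH}(y))<R/2$, the inner integral vanishes for $r<R/2$. On each shell $M_y(r)$, whose volume is $\sinh_c(r)^k\vol(\mS^k)\vol(N)$ and thus independent of $y$, Cauchy--Schwarz combined with the hypothesis yields
\[\int_{M_y(r)} |G_2(x,y)|\,\d x \leq \bigl(\vol M_y(r)\bigr)^{1/2}\left(\int_{M_y(r)}|G_2(x,y)|^2\,\d x\right)^{1/2}\leq C'\sinh_c(r)^{k/2}e^{-\rho r}.\]
For $c>0$ the right-hand side is asymptotic to $e^{-(\rho-ck/2)r}$ and is integrable over $[R/2,\infty)$ precisely because $\rho>ck/2$; for $c=0$ the growth factor $r^{k/2}$ is polynomial and integrability is automatic. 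All bounds are uniform in $y$, so $\sup_y\int_M|G_2(x,y)|\,\d x<\infty$, which gives the $L^1$-boundedness.

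For the $L^\infty$-bound I exploit the adjoint relation of Lemma~\ref{adj.green}, which yields $|G_{D-\mu}(x,y)|=|G_{D-\bar\mu}(y,x)|$ pointwise; since the cutoff satisfies $\rho(x,y)=\rho(y,x)$, it follows that $|G_2(x,y)|=|G_{2,D-\bar\mu}(y,x)|$. The shifted operator $D-\bar\mu$ is again invertible on $L^2$ and its Green function satisfies the same decay hypothesis with the same exponent (since $\Re\kappa_{\bar\mu}=\Re\kappa_\mu$), so applying the previous paragraph with $\bar\mu$ in place of $\mu$ and then relabeling $x\leftrightarrow y$ gives $\sup_x\int_M|G_2(x,y)|\,\d y<\infty$ as required.

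The only substantive obstacle is matching the exponential volume growth $\sinh_c(r)^{k/2}\sim e^{ckr/2}$ of geodesic spheres against the exponential decay rate $e^{-\rho r}$; the threshold $\rho>ck/2$ arises precisely from this comparison. Every remaining step is routine bookkeeping using the product structure of $\Mc^{m,k}$, the symmetry of the cutoff, and the coarea formula.
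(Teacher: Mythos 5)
Your proof is correct and essentially the same as the paper's: both apply Cauchy--Schwarz on the shells $M_y(r)$ (whose volume $\sinh_c(r)^k\vol(\mS^k)\vol(N)$ is independent of $y$), use the coarea decomposition in the radial variable, and obtain the threshold $\rho > ck/2$ from comparing $\sinh_c(r)^{k/2}\sim e^{ckr/2}$ with $e^{-\rho r}$. One small remark: for $p=\infty$ the paper directly invokes the decay of $\int_{M_x(r)}|G_2(x,y)|^2\,\d y$ (integration in the \emph{second} variable), which is not literally the stated hypothesis; your invocation of the adjoint relation $|G_{D-\mu}(x,y)|=|G_{D-\bar\mu}(y,x)|$ together with $\Re\kappa_{\bar\mu}=\Re\kappa_\mu$ makes explicit why this second-variable decay is available in the setting where the proposition is used, so that step is a modest clarification rather than a different route.
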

 
\begin{proof} 
   We start with $p=1$ and estimate for $\phi\in C_c^\infty(M, \Si_M)$

          \begin{align*}
 \int_M |(P_2\phi)(x)|\, \d x&\leq  \int_M  \int_M |G_2(x,y)|
|\phi(y)|\,\d y\, \d x=  \int_M  \left( \int_M |G_2(x,y)| \d x\right) |\phi(y)|\, \d y
\\
 &=  \int_M  \left( \int_{\mR_+}  \int_{M_y(r)} |G_2(x,y)|\, \d  \tilde{x}\, \d r\right) |\phi(y)|\d y\\
 &\leq  \int_M  \left( \int_{\mR_+}  \vol(M(r))^\frac{1}{2}
\left(\int_{M_y(r)} |G_2(x,y)|^2 \, \d \tilde{x}\right)^\frac{1}{2} \right) |\phi(y)|\, \d y\\
 &\leq  C'  \int_{r\geq r_0}  \sinh_c^\frac{k}{2}(r) e^{-\rho r}\, \d r \Vert
\phi\Vert_{L^1}.
  \end{align*}
  where $\tilde{x}$ is the angular part and $r$ the radial part of $x$.

For $\rho>\frac{ck}{2}$ the integral $\int_{r\geq r_0}  \sinh_c^\frac{k}{2}(r)
e^{-\rho r}\,\d r $ is bounded. Hence, $P_2\colon  L^1\to L^1$ is invertible.

Next, we consider the other case $p=\infty$. Then for $\phi\in L^\infty(M, \Si_M)$

\begin{align*}
|(P_2\phi)(x)|&\leq  \int_{\frac{R}{2}}^\infty  \int_{M_x(r)}|G_2(x,y)|
|\phi(y)| \, \d
\tilde{y}  \,\d r\\
&\leq \int_{\frac{R}{2}}^\infty  \sup_{M_x(r)} |\phi| 
\left(\int_{M_x(r)}\hspace{-0.3cm} |G_2(x,y)|\, \d
\tilde{y}  \right)\,\d r \\
 &\leq   \Vert \phi \Vert_{L^{\infty}} \int_{\frac{R}{2}}^\infty   \Vert
G_2(x,y)\Vert_{L^2(M_x(r))} \vol
(M(r))^{\frac{1}{2}} \,\d r \\
 &\leq C \Vert \phi \Vert_{L^{\infty}} \int_{\frac{R}{2}}^\infty  e^{-\rho
r}  \sinh_c^{\frac{k}{2}}(r)\,\d r \leq  \tilde{C} \Vert \phi\Vert_\infty.
 \end{align*}
where for $\rho> \frac{ck}{2}$ the last inequality follows as above.
Thus, $\Vert P_2\phi\Vert_\infty \leq  \tilde{C} \Vert \phi\Vert_\infty$.
\end{proof} 

\section{ $\sigma_p$ contains the $L^p$-spectrum on
$\Mc^{m,k}$}\label{sec.first}

In this section we prove one direction of Theorem~\ref{main_inv}.

\begin{prop}\label{subset_spec_Hprod}  Let $p\in [1,\infty]$. Let $\lambda_0^2$,
$\lambda_0\geq 0$, be the lowest eigenvalue of the Dirac square on the closed
Riemannian spin manifold $N$.
 The $L^p$-spectrum of the Dirac operator on $\Mc^{m,k}$ is a subset of 
\[\sigma_p:= \left\{\mu\in \mC\ \Bigg|\  \mu^2=\lambda_0^2+\kappa^2, |\Im
\kappa|\leq ck\left| \frac{1}{p}-\frac{1}{2}\right| \right\}.\]
\end{prop}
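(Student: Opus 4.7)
My plan is to show that if $\mu_0\notin\sigma_p$ then $D-\mu_0$ admits a bounded inverse $L^p\to L^p$, placing $\mu_0$ in the $L^p$-resolvent set. Write $\kappa_0 := \sqrt{\lambda_0^2 - \mu_0^2}$ with $\Re\kappa_0\geq 0$ (the convention of Lemma~\ref{decay_Green}; note $i\kappa_0$ corresponds to the $\kappa$ appearing in the statement of Proposition~\ref{subset_spec_Hprod}). The hypothesis $\mu_0\notin\sigma_p$ then reads $\Re\kappa_0 > ck|1/p-1/2|$, and in particular $\mu_0\notin\Spec_{L^2}^{\Mc}(D)$ by Remark~\ref{specL2_MC}, so the Green function $G_{D-\mu_0}$ exists uniquely by Proposition~\ref{prop.green.exists}.

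The extremal cases $p\in\{1,\infty\}$, for which the hypothesis is $\Re\kappa_0 > ck/2$, follow directly from the decomposition $G = G_1 + G_2$ of Section~\ref{sec_Green}. Lemma~\ref{decay_Green} gives the spherical $L^2$-decay of $G_2$ at rate $\Re\kappa_0 > ck/2$, so Proposition~\ref{Green_smooth} yields boundedness of $P_2$ on $L^1$ and $L^\infty$; combining with the boundedness of $P_1$ on every $L^p$ from Proposition~\ref{Green_sing}, one obtains $(D-\mu_0)^{-1} = P_1+P_2$ bounded on $L^1$ and $L^\infty$.

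For intermediate $p\in(1,2]$ (the range $p\in[2,\infty)$ being handled symmetrically by duality) I would apply Stein Interpolation (Theorem~\ref{stein_int}). Set $t_0 := 2/p - 1\in(0,1]$ so that $1/p = (1-t_0)/2 + t_0/1$, and on the strip $S = \{z:0\leq \Re z\leq 1\}$ define the analytic family
\[
\kappa(z):=\kappa_0 + \tfrac{ck}{2}(z-t_0),\qquad \mu(z)^2:=\lambda_0^2-\kappa(z)^2,\qquad \mu(t_0)=\mu_0,
\]
with $\mu(z)$ chosen as a continuous analytic square root. The key observation is that $\Re\kappa(z)\geq \Re\kappa_0 -(ck/2)t_0>0$ throughout $S$, so $\mu(z)$ avoids $\Spec_{L^2}^{\Mc}(D)$ on the entire strip. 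Hence $A_z:=(D-\mu(z))^{-1}$ makes sense on $L^2$, with $z\mapsto\langle A_z\phi,\psi\rangle$ analytic in the interior of $S$ for $\phi,\psi\in C_c^\infty$ by the resolvent identity. On the boundary $\Re z=0$ one has $\Re\kappa(is) = \Re\kappa_0-(ck/2)t_0>0$ and (by a short direct computation, since $|\Im\mu(is)|$ remains bounded below as $s\to\pm\infty$) $\mathrm{dist}(\mu(is),\Spec_{L^2}^{\Mc}(D))$ stays uniformly bounded below, so $\|A_{is}\|_{L^2\to L^2}$ is uniformly bounded in $s$. On the boundary $\Re z=1$ one has $\Re\kappa(1+is) = \Re\kappa_0+(ck/2)(1-t_0)>ck/2$, so the argument of the previous paragraph applied to $\mu(1+is)$ in place of $\mu_0$ shows that $A_{1+is}$ is bounded on $L^1$. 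Stein Interpolation then yields $A_{t_0}=(D-\mu_0)^{-1}$ bounded from $L^p$ to $L^p$, as desired.

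The main obstacle I anticipate is verifying the \emph{uniform} boundedness in $s$ of $\|A_{1+is}\|_{L^1\to L^1}$ on the right boundary: although the decay rate $\Re\kappa(1+is)$ is constant in $s$, the prefactors appearing in Proposition~\ref{decay_mode} and Lemma~\ref{decay_Green} depend on $\mu$ (through the roots $\kappa_\lambda$), and the norm of the inverse on the auxiliary closed manifold entering Proposition~\ref{Green_sing} depends on $\mu$ as well. These dependencies must either be controlled to show at most polynomial growth in $|s|$ so that Theorem~\ref{stein_int} applies directly, or else absorbed by a regularization of the form $\widetilde A_z := e^{\gamma z^2}A_z$ with small $\gamma>0$ before invoking Stein; in the latter case the conclusion for $A_{t_0}$ is recovered by multiplying by the finite constant $e^{-\gamma t_0^2}$.
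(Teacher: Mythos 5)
Your overall strategy mirrors the paper's — endpoint bounds from the Green function decomposition (Propositions~\ref{Green_sing}, \ref{Green_smooth}, Lemma~\ref{decay_Green}) followed by Stein interpolation — but the way you set up the analytic family contains a genuine gap. You define $A_z = (D-\mu(z))^{-1}$ with $\mu(z)$ ``chosen as a continuous analytic square root'' of $\mu(z)^2 = \lambda_0^2 - \kappa(z)^2$, where $\kappa(z) = \kappa_0 + \tfrac{ck}{2}(z-t_0)$ is affine. However, $\mu(z)^2$ is a non-constant quadratic, and its zero corresponding to $\kappa(z^*)=\lambda_0$, namely $z^* = t_0 + \tfrac{2}{ck}(\lambda_0 - \kappa_0)$, can land in the interior of the strip $S$. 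Concretely, with $ck=2$, $p=4/3$ (so $t_0=1/2$, threshold $\Re\kappa_0 > 1/2$), $\kappa_0=1$ and $\lambda_0=1.1$ one gets $\mu_0 = \sqrt{0.21}\notin\sigma_{4/3}$ but $z^* = 0.6 \in (0,1)$. Since this zero of $\mu^2$ is simple, no single-valued analytic branch of $\mu$ exists on $S$, and therefore $z\mapsto (D-\mu(z))^{-1}$ cannot be made analytic on the interior of the strip. Theorem~\ref{stein_int} then does not apply.

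The paper avoids exactly this obstruction by never taking a square root: it sets $h(z) := \lambda_0^2 + \kappa(z)^2$ with $\kappa(z)$ affine, and interpolates the family $A_z = \bigl(D^2 - h(z)\bigr)^{-1}$, which is manifestly analytic wherever $h(z)\notin\Spec_{L^2}(D^2)=[\lambda_0^2,\infty)$ — a condition that is easy to verify on the whole strip since $\Im\kappa(z)$ is bounded away from $0$ there. The passage from $L^p$-invertibility of $D^2-\mu^2$ to that of $D-\mu$ is then provided by Lemma~\ref{spec_D2}, which you do not invoke. Your final remarks about uniformity of the boundary bounds (and the $e^{\gamma z^2}$ regularization) are reasonable and apply equally to the paper's setup, but they do not repair the branch-point problem; to make your proof work you should either replace $(D-\mu(z))^{-1}$ with $(D^2-h(z))^{-1}$ as in the paper and then appeal to Lemma~\ref{spec_D2}, or else prove directly that the chosen path $\kappa(z)$ can always be deformed within the strip to avoid the zeros of $\lambda_0^2-\kappa^2$, which is substantially more delicate.
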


\begin{proof}
We will show that $D-\mu\colon H_1^p\subset L^p\to L^p$ has a bounded inverse 
for all $\mu\in \mC\setminus \sigma_p$. Fix $\mu\in \mC\setminus \sigma_p$, and
let $\kappa\in \mC$ such that $\mu^2=\lambda_0^2+\kappa^2$. For $p=2$, the lemma follows from Remark \ref{specL2_MC}.

Let now $p\in \{ 1,\infty\}$ and $\mu \not\in \sigma_1=\sigma_\infty$. Then
$\mu\not\in \sigma_2$ and $(D-\mu)\colon  H_1^2(\Mc^{m,k}) \to L^2(\Mc^{m,k})$ has a bounded
inverse given by $P_\mu\colon  \phi \mapsto \int_{\Mc} G_\mu(x,y)\phi(y)dy$. By
Proposition \ref{Green_sing}, \ref{Green_smooth} and Lemma~\ref{decay_Green} the
operator $P_\mu\colon  L^p \to L^p$ is bounded for  $\left| \Im
\kappa\right|>ck\left|\frac{1}{p}-\frac{1}{2}\right|=c\frac{k}{2}$. Hence, the $L^1$- and the $L^\infty$-spectrum of
$D$ on $\Mc^{m,k}$ has to be contained in $\sigma_1=\sigma_\infty$. 

First we deal with the case that $\Im \kappa>0$. For $p\in [1,2]$ we use the Stein Interpolation Theorem~\ref{stein_int}:  Fix
$\epsilon >0$ and $y_0\in \R$. We set
$h(z):=\mu(z)^2:=\lambda_0^2+\kappa(z)^2:=\lambda_0^2+(y_0+\frac{ck}{2}\i z+\i
\epsilon)^2$  and
$A_z=(D^2-h(z))^{-1}$. By Remark~\ref{specL2_MC} the operators
\[A_{w+\i
y}=\left(D^2-\left[\lambda_0^2+\Bigg(y_0-\frac{ck}{2}y+\i\Big(\underbrace{\frac{
ck}{2}w+\epsilon}_{=\Im \kappa(z) >0}\Big)\Bigg)^2\right]\right)^{-1},\] for $0\leq
w\leq 1$ and $y\in \mR$, are bounded as operators from $L^2$ to $L^2$.  Furthermore \[A_{1+\i
y}=\left(D^2-\left(\lambda_0^2+\left(y_0-\frac{ck}{2}y+\i\left(\frac{ck}{2}
+\epsilon\right)\right)^2\right)\right)^{-1}\] is bounded from $L^1$ to $L^1$ as
seen above. Thus -- as required to apply the Stein
interpolation theorem -- $A_{\i y}$ and $A_{1+\i y}$ are bounded operators from
$L^1 \cap L^2$ to $L^1+L^2$.

 Let now $\phi\in L^1\cap L^2$ and $\psi\in L^\infty\cap L^2$. Set $S:=\{z\in
\mC\ |\
0\leq \Re z\leq 1\}$. We define $b_{\phi,\psi}(z)=\< A_z\phi,\psi\>$. The map $b_{\phi,\psi}$ is
analytic in the interior of $S$, since the resolvent is, see
Lemma~\ref{resolv_analytic}. Moreover, $|b_{\phi,\psi}(z)|\leq \Vert A_z\Vert
\Vert\phi\Vert_{L^2}\Vert\psi\Vert_{L^2} \leq (\max_{0\leq \Re z\leq 1} \Vert
A_z\Vert) \Vert\phi\Vert_{L^2}\Vert\psi\Vert_{L^2}$ where $\Vert A_z\Vert$
denotes the operator norm for $A_z\colon L^2\to L^2$. Thus, $b_{\phi,\psi}(z)$ is
uniformly bounded and continuous on $S:=\{z\in \mC\ |\ 0\leq \Re z\leq 1\}$. 
Thus, we can apply Theorem~\ref{stein_int} and obtain for $t\in (0,1)$ and $p=
\frac{2}{1+t}$ that
$A_t=\left(D^2-h\left(\frac{2}{p}-1\right)\right)^{-1}=\left(D^2-\left(\lambda_0^2+\left(y_0+ck\i\left(\frac{1}{p}
-\frac{1}{2}\right)+\i\epsilon\right)^2\right)\right)^{-1}$ is bounded from $L^p$ to $L^p$.

In the case ${\rm Im} \kappa<0$ we set analogously $A_z=(D^2-g(z))^{-1}$ for
$g(z)=\lambda_0^2+\left(y_0-\frac{ck}{2}\i z-\i\epsilon\right)^2$ and obtain that
$A_t=\left(D^2-g(\frac{2}{p}-1)\right)^{-1}$ is bounded from $L^p$ to $L^p$.
Since $y_0\in \R$ and $\epsilon>0$ can be chosen arbitrarily, we get for all
$\mu\in \mC\setminus \sigma_p$ that $\mu^2$ is not in the $L^p$-spectrum of
$D^2$. Using Lemma~\ref{spec_D2} the 
claim follows for $p\in [1,2]$ and with Lemma~\ref{conj_spectrum}.(i) for $p\in
[2,\infty)$.
\end{proof}

\section{Construction of test spinors on $\mH^{k+1}$}\label{sec.const}

In this section
we determine the Dirac $L^p$-spectrum of the hyperbolic space. The general case for $\Mc$ is given in the next section. 

\begin{prop}\label{spec_H} Let $p\in [1,\infty]$. The $L^p$-spectrum of the
Dirac operator $D$ on the  hyperbolic space $\mH^{k+1}$  is given by the set 
 \[\sigma_p^{\mH}:=\left\{\mu\in \mC\ \Bigg|\ |\Im \mu|\leq k\left|
\frac{1}{p}-\frac{1}{2}\right| \right\}.\]
\end{prop}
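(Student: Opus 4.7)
The plan is to prove the two inclusions separately. The containment $\Spec_{L^p}^{\mH^{k+1}}(D)\subseteq \sigma_p^{\mH}$ is immediate from Proposition~\ref{subset_spec_Hprod} specialised to $c=1$ and $N$ a point: then $\lambda_0=0$, and the defining relation $\mu^2=\kappa^2$ forces $|\Im\mu|=|\Im\kappa|$. The real content of the proposition is therefore the reverse inclusion, which I would establish by producing, for each $\mu\in\sigma_p^{\mH}$, a Weyl-type sequence $(\phi_n)\subset C_c^\infty(\Si_{\mH^{k+1}})$ with $\|\phi_n\|_p=1$ and $\|(D-\mu)\phi_n\|_p\to 0$, ruling out a bounded inverse of $D-\mu\colon H_1^p\to L^p$.

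The building block will be an exact smooth solution $\phi_\mu$ of $D\phi_\mu=\mu\phi_\mu$ on $\{r>r_0\}$ with explicit radial asymptotics. My plan is to fix a parallel spinor $\Psi_0$ on $\mR^{k+1}$, transport it to $\mH^{k+1}$ via $\hat I_1^{-1}$, and use the ansatz $\phi_\mu = u(r)\Psi_0 + v(r)\pa_r\cdot\Psi_0$. Corollary~\ref{cor_ex} ensures that any such spinor lies in the spherical mode $k^2/4$, and feeding the ansatz into \eqref{D_dec} with $c=1$, $\lambda=0$, $\rho=k/2$ reduces $D\phi_\mu=\mu\phi_\mu$ to the radial ODE system analysed in the proof of Proposition~\ref{decay_mode}. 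That analysis yields two fundamental solutions $\phi_\mu^{\pm}$ on $(r_0,\infty)\times\mS^k$ satisfying $|\phi_\mu^{\pm}(r,\theta)|\sim e^{(-k/2\mp\Im\mu)r}$ as $r\to\infty$; elliptic regularity makes them smooth on their domain of definition.

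For the Weyl sequence I would take a radial cutoff $\chi_n(r)\in C_c^\infty((0,\infty))$ equal to one on $[a_n+L_n,\,b_n-L_n]$, supported in $[a_n,b_n]$, with $\|\chi_n'\|_\infty\le C/L_n$ and $a_n,L_n\to\infty$. Setting $\phi_n:=\chi_n\,\phi_\mu^{\pm}$ (with the sign adapted to $\Im\mu$), the fact that $\chi_n$ is radial gives the commutator identity $(D-\mu)\phi_n=\chi_n'(r)\,\pa_r\cdot\phi_\mu^{\pm}$, so $|(D-\mu)\phi_n|\le (C/L_n)\,|\phi_\mu^{\pm}|$ on two annular shells and vanishes elsewhere. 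Combined with the volume growth $\sinh^k(r)$, both $\|\phi_n\|_p^p$ and $\|(D-\mu)\phi_n\|_p^p$ reduce to one-dimensional integrals of $e^{\alpha r}$ with $\alpha=k(1-p/2)\mp p\Im\mu$; a routine calculation then yields $\|(D-\mu)\phi_n\|_p/\|\phi_n\|_p=O(1/L_n)\to 0$ both in the interior of the strip (where $\alpha>0$ after the sign choice) and on its boundary (where $\alpha=0$, one checks the ratio by hand).

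The main bookkeeping step is the sign choice: as $p$ ranges over $[1,\infty]$ and $\Im\mu$ over $[-k|1/p-1/2|,\,k|1/p-1/2|]$, the fundamental solution $\phi_\mu^{\pm}$ must be selected so that its pointwise growth is \emph{borderline non-integrable} in $L^p$, which is precisely what lets widening the annulus inflate $\|\phi_n\|_p$ faster than $\chi_n'$ deflates it through the commutator. Once the sign table is set up, the Weyl estimates become uniform on compact sub-strips. Non-smoothness of $\phi_\mu^{\pm}$ at the origin is irrelevant, since $\chi_n$ is supported in $\{r\ge a_n\}$ with $a_n>0$.
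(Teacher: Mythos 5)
Your forward inclusion is the paper's, and the idea of building radial test spinors from the ODE analysed in Proposition~\ref{decay_mode} (rather than the paper's upper-half-space ansatz $b(x)c_n(\log y)y^\alpha\psi_0$) is a legitimate alternative route. However, there is a genuine gap in your converse inclusion: the radial cutoff construction produces a Weyl sequence only on the boundary $\partial\sigma_p^{\mH}$, not in the interior, and the interpolation step the paper uses to pass from $\partial\sigma_p^{\mH}$ to all of $\sigma_p^{\mH}$ is missing from your plan.

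There are two concrete obstructions. First, when $p>2$ and $\mu$ lies in the interior of $\sigma_p^{\mH}$, both exponents $\alpha_\pm=k(1-p/2)\mp p\,\Im\mu$ are strictly negative, so no sign choice yields the borderline non-integrable behaviour your bookkeeping relies on. Second, and more fundamentally, even when a sign gives $\alpha>0$ (as happens for $p<2$) the Weyl quotient does not tend to zero. Since $(D-\mu)\phi_n=\chi_n'(r)\,\pa_r\cdot\phi_\mu^{\pm}$ holds pointwise, the quotient $\|(D-\mu)\phi_n\|_p/\|\phi_n\|_p$ equals the ratio $\|\chi_n'\,w\|_{L^p(\nu)}/\|\chi_n\,w\|_{L^p(\nu)}$ with $w=|\phi_\mu^\pm|$ and $\nu=\sinh_c^k(r)\,\d r\otimes\vo_{\sigma^k}$, which for $a_n$ large is (up to multiplicative constants close to $1$) a ratio of one-dimensional $L^p$-norms for the weight $e^{\alpha r}\,\d r$ on $(0,\infty)$. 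For every nonnegative $\chi\in C_c^\infty((0,\infty))$ and every $\alpha\neq 0$, integrating the derivative of $\chi^p e^{\alpha r}$ and applying H\"older gives, for $1\le p<\infty$,
\[
\Bigl(\int_0^\infty |\chi'|^p e^{\alpha r}\,\d r\Bigr)^{1/p}\;\geq\;\frac{|\alpha|}{p}\,\Bigl(\int_0^\infty |\chi|^p e^{\alpha r}\,\d r\Bigr)^{1/p},
\]
and the same conclusion for $p=\infty$ follows by writing $\chi(r)$ as an integral of $\chi'$. Hence the Weyl quotient is bounded below by a positive constant whenever $\alpha\neq 0$; the $O(1/L_n)$ estimate you cite implicitly treats the weight as flat, which only occurs when $\alpha=0$, i.e.\ on $\partial\sigma_p^{\mH}$. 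The remedy is precisely the paper's last paragraph: having shown $\partial\sigma_r^{\mH}\subset\Spec_{L^r}^{\mH^{k+1}}(D)$ for every $r$, observe that $\sigma_p^{\mH}$ is the union of $\partial\sigma_r^{\mH}$ over $r$ between $p$ and $2$, and invoke Corollary~\ref{inv_interpolation} (Riesz--Thorin) to transfer each such boundary point into $\Spec_{L^p}^{\mH^{k+1}}(D)$. Your argument should be recast as a boundary statement and then completed by this interpolation step.
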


\begin{proof} From Proposition \ref{subset_spec_Hprod} we know that the
$L^p$-spectrum is contained in $\sigma_p^{\mH}$. Thus, it remains to show that
each element $\mu$ of $\sigma_p^{\mH}$ is contained in the $L^p$-spectrum of $D$. For
that we start with a similar ansatz as was used in
\cite[Lemma~7]{davies_simon_taylor_88} for the Laplacian.

Let  the hyperbolic space $\mH^{k+1}$ be modelled by 
the space $\{(y,x_1,\ldots,x_k)\ |\ y>0\}$ equipped with the 
metric $g=y^{-2}(dx_1^2 + \ldots + dx_k^2+dy^2)$. 
We set $e_i= y \frac{\partial}{\partial x_i}=y\partial_i$ for $i=1,\ldots, k$ 
and $e_y=y\frac{\partial}{\partial y}=y\partial_y$. Then,
$(e_y,e_1,\ldots,e_k)$ 
forms an orthonormal basis, which can assumed to be positively oriented. 
Further we have 
$[e_y,e_i]=e_i=-[e_i,e_y]$. All other commutators vanish. Then,
$-\Gamma_{iy}^i=\Gamma_{ii}^y=1$ and all other Christoffel symbols vanish.
The orthonormal frame $(e_y,e_1,\ldots,e_k)$ can be lifted to the spin structure
$\theta\colon \PSpin(\mH^{k+1})\to \PSO(\mH^{k+1})$, namely we choose a map
$E\colon \mH^{k+1}\to \PSpin(\mH^{k+1})$ with
$\theta(E)=(e_y, e_1,\ldots,e_k)$. A spinor is by definition a section of 
the associated bundle $\Si_{\mH^{k+1}}=
\PSpin(\mH^{k+1})\times_{\rho_{k+1}}\Si_{k+1}$, 
so every spinor can be written as $x\mapsto [E(x),\phi(x)]$ for a function
$\phi\colon \mH^{k+1}\to \Si_{k+1}$.

Hence, identifying $(e_y,e_1,\ldots,e_k)$ with the standard basis of $\mR^{k+1}$
we obtain \cite[(4.8)]{CGLS}, \cite[Lemma~4.1]{baer_diss}
\[ \nabla_{e_i}[E,\phi]=[E,\partial_{e_i}\phi +\frac{1}{2}e_i\cdot e_y\cdot
\phi];\quad  \nabla_{e_y}[E,\phi] = [E,\partial_{e_y} \phi] \]
and 
\begin{align}
 D[E,\phi]&=[E,\sum_{i=1}^k e_i\cdot \partial_{e_i} \phi +  e_y\cdot
\partial_{e_y} \phi - \frac{k}{2} e_y\cdot \phi]\nonumber\\
&=[E,\sum_{i=1}^k y e_i\cdot \partial_i \phi +  y e_y\cdot \partial_y \phi -
\frac{k}{2} e_y\cdot \phi].\label{D_hyp_eq}
\end{align}

Let $\psi_0\in\Si_{k+1}$ be a unit-length 
eigenvector of the Clifford multiplication with the vector $e_y=(1,0,\ldots,0)^t\in
\mR^{k+1}$ 
to the eigenvalue $\pm \i$, i.e.
 $e_y\cdot \psi_0=\pm \i\psi_0$.  
Set $\phi_n(x,y)=b(x)c_n(\log y) y^{\alpha} \psi_0$ where $\alpha\in \mC$, $b(x)$ is any
compactly supported function on $\R^{k}$, and where $c_n\colon  \R\to \R$ is chosen to be a
smooth cut-off function compactly supported on $(-4n, -n)$, $c_n|_{[-3n,-2n]}\equiv 1$
and $|c_n'|\leq 2/n$. Then for $p\in [1,\infty)$ one estimates $\Vert
c_n'\Vert_p^p/\Vert c_n\Vert_p^p\leq C n^{-p}\to 0$ as $n\to \infty$. For
$p=\infty$ we have $\Vert c_n'\Vert_\infty/\Vert c_n\Vert_\infty\leq 2/n \to 0$
as $n\to \infty$. Then we set $\Phi_n:=[E,\phi_n]$ and obtain
\begin{align}(D-\mu)\Phi_n=& \left[E, y  c_n(\log y)\,y^{\alpha}
\sum_{i=1}^k(\partial_i b)\, e_i\cdot  \psi_0 \pm b(x) c_n'(\log y) y^{\alpha}\i
 \psi_0\right.\nonumber\\
&\left. + b(x)c_n(\log y) (\pm \i\alpha \mp \i\frac{k}{2} - \mu) y^\alpha 
\psi_0\right].\label{D-mu} \end{align}

In the following we will use the notation $(X\cdot\,.\,)\in \End(\Si_{k+1})$ for
the Clifford multiplication by $X\in \mR^{k+1}$, and obviously its operator norm
$|(X\cdot\,.\,)|$ equals to the usual norm of $X$.

Let $\mu = s \pm \i k\left(\frac{1}{p} - \frac{1}{2}\right)$, $s\in \R$.
We choose $z=\log y$ and $\alpha=\frac{k}{2}\mp  \i \mu=\frac{k}{p}\mp \i s$. Thus, the
last summand in \eqref{D-mu}  vanishes and $p\Re\alpha=k$. Then, for $p\in
[1,\infty)$ we have
\begin{align*} &\frac{\Vert (D-\mu) \Phi_n\Vert_p}{\Vert \Phi_n\Vert_p}\\
&\leq \frac{\left( \int_{\R^{k}} |\sum _i  (\partial_{i} b) (e_i\cdot \,.\,)|^p
\int_0^\infty |c_n(\log y)|^py^{p \Re\alpha + p -
k-1}\right)^\frac{1}{p}}{\left(\int_{\R^{k}} |b(x)|^p \int_0^\infty |c_n(\log
y)|^p y^{p \Re \alpha - k-1}\right)^\frac{1}{p}}\\
&\phantom{\leq}+\frac{ \left( \int_{\R^{k}} |b(x)|^p \int_0^\infty |c_n'(\log y)|^p y^{p \Re
\alpha - k-1}\right)^\frac{1}{p}}{\left(\int_{\R^{k}} |b(x)|^p \int_0^\infty
|c_n(\log y)|^p y^{p\Re \alpha - k-1}\right)^\frac{1}{p}}\\
&\stackrel{p\Re\alpha=k}{\leq} c\left( \frac{\int_{\R^{k}} \sum_i |\partial_{i} b|^p
\int_0^\infty |c_n(\log y)|^py^{p-1}   }{\int_{\R^{k}} |b(x)|^p \int_0^\infty
|c_n(\log y)|^p y^{-1}}\right)^\frac{1}{p} + \left(\frac{\int_0^\infty
|c_n'(\log y)|^p y^{-1}}{\int_0^\infty |c_n(\log y)|^p
y^{-1}}\right)^\frac{1}{p}\\
&\stackrel{z=\log y}{\leq}  c\left(\frac{\int_{\R^{k}} \sum_i |\partial_{i} b|^p
\int_{-\infty}^0 |c_n(z)|^p e^{zp}   }{\int_{\R^{k}} |b(x)|^q \int_{-\infty}^0
|c_n(z)|^p }\right)^\frac{1}{p} +\left( \frac{\int_{-\infty}^0 |c_n'(
z)|^p}{\int_{-\infty}^0 |c_n(z)|^p }\right)^\frac{1}{p}\\
&\leq  C e^{-n} + \left(\frac{\int_{-\infty}^0 |c_n'(
z)|^p}{\int_{-\infty}^0 |c_n(z)|^p }\right)^\frac{1}{p}\to 0
\end{align*}
where the last inequality uses 
$$\int_{-\infty}^0 |c_n(z)|^p e^{zp}\, \d z
=\int_{-4n}^{-n} |c_n(z)|^p e^{zp}\, \d z\leq e^{-np}\int_{-4n}^{-n} |c_n(z)|^p \, \d z =e^{-np}\int_{-\infty}^{-0} |c_n(z)|^p \, \d z.$$

For $p=\infty$ we have $\mu=s\pm \i\frac{k}{2}$, $\alpha=\mp s$ and the estimate above is done analogously.

Summarizing, we have shown that $\partial \sigma_p^{\mH}$, the boundary of 
$\sigma_p^{\mH}$, is a subset of the Dirac $L^p$-spectrum for $\mH^{k+1}$ for
$p\in [1,\infty]$.
Note that $\sigma_s^{\mH}=\bigcup_{2\geq r\geq s}\pa \sigma_r^{\mH}$ for $s<2$
and $\sigma_s^{\mH}=\bigcup_{2\leq r\leq s}\pa \sigma_r^{\mH}$ for $s>2$,
respectively.
Thus, using the Riesz-Thorin interpolation theorem we see that $\sigma_p^{\mH}$  is a subset
of the $L^p$-spectrum of $D$ on $\mH^{k+1}$ for $p\in [1,\infty]$. 
\end{proof}

\begin{remark}\label{D_hyp_eq_2}
 From \eqref{D_hyp_eq} we obtain 
 \begin{align*}
  D^2 [E,\phi]=&[E, \sum_{i,j} y^2 e_i\cdot e_j\cdot \partial_{i}\partial_j \phi + \sum_i y^2 e_i\cdot e_y\cdot \partial_i\partial_y \phi-y\frac{k}{2} \sum_i e_i\cdot e_y\cdot \partial_i \phi\\
  & +  \sum_{i} y^2 e_y\cdot e_i\cdot \partial_{y}\partial_i \phi
  +  \sum_i y e_y\cdot e_i\cdot \partial_i \phi
  -y^2\partial_y\partial_y\phi-y\partial_y\phi+y\frac{k}{2}\partial_y\phi\\
  &-y\frac{k}{2} \sum_i e_y\cdot e_i\cdot \partial_i \phi+y\frac{k}{2}\partial_y\phi -\frac{k^2}{4}\phi]\\
  =&[E, -y^2\sum_i \partial_i^2 \phi-y^2 \partial_y^2 \phi + y(k-1)\partial_y\phi +  \sum_i y e_y\cdot e_i\cdot \partial_i \phi -\frac{k^2}{4}\phi].
 \end{align*}
 
 We use $\mu$ and $\phi_n=b(x)c_n(\log y) y^\alpha \psi_0$ of the last proposition with $b$, $\alpha$, $c_n$ and $\psi_0$ as therein. For $c_n$ we require additionally $|c_n''|\leq 8n^{-2}$. Hence, $\Vert c_n''\Vert_p/\Vert c_n\Vert_p\to 0$ as $n\to \infty$ for $p\in [1,\infty]$.  Then we have
\begin{align*}
  (D^2-\mu^2) [E,\phi_n]= &\left[E, \left( -y^2c_n(\log y) y^\alpha \sum_i \partial_i^2 b -y^2b \partial_y^2 (c_n(\log y) y^\alpha) + y(k-1)b\partial_y(c_n(\log y) y^\alpha)\right.\right. \\
  &\left.\left.-(\frac{k^2}{4}+\mu^2)bc_n(\log y) y^\alpha\right)\psi_0-\i c_n(\log y)y^\alpha \sum_i y (\partial_i b) e_i\cdot\psi_0 \right]\\
  = &\left[E,  -y^2c_n(\log y) y^\alpha \sum_i \partial_i^2 b \psi_0 -\i c_n(\log y)y^\alpha \sum_i y (\partial_i b) e_i\cdot\psi_0\right. \\
  &\left.  -y^{\alpha}b \left( c_n'' +(2\alpha+k-2)c_n'  + c_n\left(\alpha(\alpha-1) -(k-1)\alpha +\frac{k^2}{4}+\mu^2\right)\right)\psi_0\right]\\
   = &[E,  -c_n(\log y) y^{\alpha+2} \sum_i \partial_i^2 b\, \psi_0 -\i c_n(\log y)y^{\alpha+1} \sum_i (\partial_i b) e_i\cdot\psi_0\\
   &\left.-y^{\alpha}b \left( c_n'' +(2\alpha+k-2)c_n'\right)\psi_0\right],
 \end{align*}
 and by analogous estimates as in Proposition~\ref{spec_H} we have $\Vert (D^2-\mu^2) [E,\phi_n]\Vert_p/\Vert [E, \phi_n]\Vert_p\to 0$ as $n\to \infty$.
\end{remark}

\begin{remark} Note that while the $L^2$-spectrum of the hyperbolic space only consists of continuous spectrum, this is no longer true for the $L^p$-spectrum for $p\neq 2$ as can be seen by considering $0\in \sigma_p^H$:\\
We view the hyperbolic space $(\mH^{k+1},g_{\mH})$ modelled on the unit ball $B_1(0)\subset \mR^{k+1}$ of the Euclidean space and equipped with the metric $g_{\mH}= f^2g_E$ where $f(x)=\frac{2}{1-|x|^2}$ and $|.|$ denotes the Euclidean norm. 
Take a constant spinor $\psi$ on $B_1(0)$ normalized such that $\Vert \psi\Vert_{L^p(B_1(0), g_E)}=1$. Then $D^{g_E}\psi=0$. 
Using the identification of spinors of conformal metrics set $\phi:=f^{-\frac{k}{2}} \psi$. Then $D^{g_{\mH}}\phi=0$ and
$\Vert \phi\Vert_{L^p(g_{\mH})}^p=\int_{B_1(0)} f^{k+1-\frac{k}{2}p} |\psi|^p \vo_{g_E}$. Thus,  $\phi$ is an $L^p$-harmonic spinor if and only if $ \int_{B_1(0)} (1-|x|^2)^{-k-1+\frac{k}{2}p}\vo_{g_E}<\infty$, i.e., if and only if $ \int_{0}^1 (1-r^2)^{-k-1+\frac{k}{2}p}r^{k}\d r<\infty$. This is true precisely if $p>2$. Thus, for all $p>2$ the $L^p$-kernel of the Dirac operator on $(\mH^{k+1}, g_{\mH})$ is nontrivial.
\end{remark}

\section{The $L^p$-spectrum on $\Mc^{m,k}$ contains $\sigma_p$}\label{sec.last}

In this section we complete the proof of Theorem~\ref{main_inv}. In Proposition
\ref{subset_spec_Hprod} it was shown that the $L^p$-spectrum on $\Mc^{m,k}$
is contained in $\sigma_p$. Thus, the converse remains to be shown. The case $N=\{y\}$ was solved in Proposition \ref{spec_H}.

Recall that by Lemma~\ref{spec_or} and Example~\ref{spec_or_ex} the Dirac $L^p$-spectrum on $\Mc^{m,k}$ is point symmetric, i.e., it is symmetric with respect to the reflection $\lambda\mapsto -\lambda$.

Let now $\mu\in \partial \sigma_p$ with  $\mu^2=\lambda_0^2+\kappa^2$, $|\Im \kappa|=
ck\left| \frac{1}{p}-\frac{1}{2}\right| $ be given. By Proposition \ref{spec_H}
and scaling, we see that $\kappa$ is in the spectrum of the Dirac operator of $\mH_c^{k+1}$.
Then, by Lemma~\ref{spec_D2}
$\kappa^2$ is in the $L^p$-spectrum of $(D^{\mH_c^{k+1}})^2$, and by Remark~\ref{D_hyp_eq_2} there is a
sequence $\psi_i\in \Gamma(\Si_{\mH_c^{k+1}})$ with $\Vert
((D^{\mH_c^{k+1}})^2-\kappa^2)\psi_i\Vert_{L^p(\mH_c^{k+1})}\to 0$ while $\Vert
\psi_i\Vert_{L^q(\mH_c^{k+1})}=1$. Moreover, by Remark \ref{spec_closed} there is a
$\psi\in \Gamma(\Si_N)$ with $\Vert \psi\Vert_{L^q(N)}=1$ and
$(D^N)^2\psi=\lambda_0^2\psi$.

Assume that at least one of the dimensions of $N$ and $\mH_c^{k+1}$ is even.
Then $\Si_{\Mc}=\Si_{\mH_c^{k+1}} \otimes \Si_N$ and by \eqref{Dirac_square_prod}
we have $D^2=({D}^{\mH_c^{k+1}})^2+({D}^N)^2$. We set $\phi_i=\psi_i\otimes\psi$. Then
\begin{align*} \Vert (D^2-\mu^2)\phi_i\Vert_p=& \Vert \psi_i\otimes ((D^N)^2-\lambda_0^2)\psi +
((D^{\mH_c^{k+1}})^2-\kappa^2) \psi_i\otimes \psi\Vert_p\\
=&\Vert((D^{\mH_c^{k+1}})^2-\kappa^2) \psi_i\otimes \psi\Vert_p\to 0.\end{align*}

Thus, $\mu^2$ is in the $L^p$-spectrum of $D^2$. By the point symmetry of the spectrum and by Lemma~\ref{spec_D2} both $\mu$ and $-\mu$ are in the $L^p$-spectrum of $D$.

Similarly we obtain the result if both the dimensions of $N$ and $\mH_c^{k+1}$
are odd by setting
$\phi_i:=\psi_i\otimes (\psi, \psi)$ in notation of Section~\ref{spin_prod}. 

Up to now we have shown that all $\mu\in \partial \sigma_p$ are in the $L^p$-spectrum of the Dirac operator on $\Mc$. Following the same arguments as in the last lines of the proof of Proposition~\ref{spec_H} the proof of Theorem~\ref{main_inv} is completed.

\begin{remark}\label{rem.spec.D2}
From Theorem \ref{main_inv} and Lemma \ref{spec_D2}, we can immediately read of
the $L^p$-spectrum of $D^2$ on $\Mc^{m,k}$. This consists of the closed
parabolic region bounded by 
\[ s\in \mR \mapsto \lambda_0^2-c^2k^2\left(\frac{1}{p}-\frac{1}{2}\right)^2 +s^2
+2\i sck\left|\frac{1}{p}-\frac{1}{2}\right|.\]
Let us compare the $L^p$-spectrum for $D^2$ on  $\Mc^{k+1,k}=\mH^{k+1}$
($c=1$ and $\lambda_0=0$) 
\[ s\in \mR \mapsto -k^2\left(\frac{1}{p}-\frac{1}{2}\right)^2 +s^2 +2\i
sk\left(\frac{1}{2}-\frac{1}{p}\right),\]
 with the one of the Laplacian on functions whose $L^p$-spectrum is given by
the closed parabolic region
bounded by \cite[(1.5)]{davies_simon_taylor_88}
\[ s\in \mR \mapsto k^2\frac{1}{p}\left(1-\frac{1}{p}\right)+s^2 +2\i
sk\left(\frac{1}{2}-\frac{1}{p}\right).\]
Up to a shift in the real direction this is the same spectrum. However the qualitative difference is that for $p\neq 2$ the spectrum of $D^2$ contains negative real numbers, in contrast to the Laplacian.
\end{remark}

\begin{appendix}

%
%
\section{Function spaces}

We want to recall some analytical facts which are helpful to define spinorial
function spaces on manifolds.

Let $(M^n,g)$ be an $n$-dimensional Riemannian spin manifold with Dirac operator
$D$. A distributional spinor (or distribution with spinor values) is a linear map $C_c^\infty(M, \Si_M)\to \mC$ with the usual continuity properties of distributions. Any spinor with regularity $L^1_{loc}$ defines a distributional spinor by using the standard $L^2$-scalar product on spinors.

Then $D\phi$ can be defined in the sense of distributions.
Let $H_1^s(M, \Si_M)$ be the set of distributional spinors $\phi$,
such that $\phi$ und $D\phi$ are in $L^s$, $s\in [1,\infty]$. Equipped with the
norm 
$\|\phi\|_{H_1^s}:=\|\phi\|_s+ \|D\phi\|_s$ this is a Banach space. This norm is the graph norm of $D$ viewed as an operator in $L^s$ to $L^s$.

\begin{lemma}Let $1\leq s<\infty$. 
$C^\infty_c(M, \Si_M)$ is dense in $H_1^s(M, \Si_M)$.
\end{lemma}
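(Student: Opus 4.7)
The plan is a standard Friedrichs-type argument in two steps: first cut off to obtain compactly supported elements of $H_1^s$, then mollify in charts to make them smooth.

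\textbf{Step 1 (cut-off).} Assuming $M$ is complete (the setting in which the paper uses these spaces), fix $x_0\in M$ and choose smooth cutoffs $\chi_R\in C_c^\infty(M)$ with $\chi_R\equiv 1$ on $B_R(x_0)$, $\supp \chi_R\subset B_{2R}(x_0)$, and $\|\nabla\chi_R\|_\infty\le C/R$ for a constant $C$ independent of $R$; such cutoffs exist on any complete manifold by smoothing the distance function. For $\phi\in H_1^s$ set $\phi_R:=\chi_R\phi$. Then $\phi_R\to\phi$ in $L^s$ by dominated convergence, and since $D$ is a first-order operator,
\begin{equation*}
 D\phi_R - D\phi = (\chi_R-1)D\phi + \nabla\chi_R\cdot\phi,
\end{equation*}
where the first summand tends to $0$ in $L^s$ by dominated convergence and the second is controlled by $(C/R)\|\phi\|_{L^s(B_{2R}\setminus B_R)}\to 0$. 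Hence $\phi_R\to\phi$ in $H_1^s$, and each $\phi_R$ has compact support.

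\textbf{Step 2 (mollification).} Given $\psi\in H_1^s$ with compact support, cover $\supp\psi$ by finitely many coordinate charts trivializing $\Si_M$, choose a subordinate smooth partition of unity, and reduce by linearity to the case where $\supp\psi$ lies in a single chart. Pushed to $\R^n$, $\psi$ becomes a compactly supported $L^s$-function with values in a fixed finite-dimensional vector space, and in this chart $D$ takes the form $\sum_i a_i(x)\partial_i + b(x)$ with smooth coefficients. Let $J_\epsilon$ denote convolution with a standard mollifier. Then $\psi_\epsilon:=J_\epsilon\psi\in C_c^\infty$ and $\psi_\epsilon\to\psi$ in $L^s$. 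For the derivative, invoke Friedrichs' commutator lemma: for a first-order operator $P$ with smooth coefficients and $f,Pf\in L^s$ with $1\le s<\infty$, one has $[P,J_\epsilon]f\to 0$ in $L^s$ as $\epsilon\to 0$. Therefore
\begin{equation*}
 D\psi_\epsilon = J_\epsilon(D\psi) + [D,J_\epsilon]\psi \longrightarrow D\psi \quad\text{in } L^s.
\end{equation*}

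Combining the two steps, a diagonal sequence of the form $J_{\epsilon_n}(\chi_{R_n}\phi)$ lies in $C_c^\infty(M,\Si_M)$ and converges to $\phi$ in the graph norm $\|\,\cdot\,\|_{H_1^s}$. The principal technical input is Friedrichs' commutator lemma, which is exactly where the restriction $s<\infty$ enters (the mollifier approximation fails in $L^\infty$); the cut-off step is routine once $M$ is assumed complete.
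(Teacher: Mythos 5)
Your proof follows the same two-step strategy as the paper: cut off with functions $\eta_R$ satisfying $|\nabla\eta_R|\lesssim R^{-1}$ to reduce to compactly supported spinors, then approximate those by smooth compactly supported spinors. The paper dispatches the second step as following ``from standard results'' while you supply the standard argument (partition of unity in charts plus Friedrichs' commutator lemma), so the two proofs coincide in substance, with yours being slightly more explicit about where $s<\infty$ is used.
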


\begin{proof}
Assume that $\phi\in H_1^s(M, \Si_M)$, $s<\infty$, is given.
For a given point $p\in M$ and for any $R>0$ one can find a 
compactly supported smooth function
$\eta_R\colon M\to [0,1]$ such that $\eta_R\equiv 1$ on $B_R(p)$ and such 
that $|\nabla\eta_R|\leq R^{-1}$.
Then one easily sees $\lim_{R\to \infty} \|\phi -\eta_R \phi\|_s=0$. Further
we calculate
 \[\|D(\phi -\eta_R \phi)\|_s\leq \|\nabla\eta_R\cdot\phi \|_s +  
\|(1-\eta_R)D\phi \|_s \to 0 \quad {\rm as\ }R\to\infty.\]
Thus the elements with compact support are dense in  $H_1^s(M, \Si_M)$.
Now if  $\psi\in H_1^s(M, \Si_M)$ has compact support, it follows from standard  
results that it can be approximated by smooth compactly supported spinors.
\end{proof}

Thus, for $s<\infty$, $H_1^s(M, \Si_M)$ is equal to the completion of
$C^\infty_c(M, \Si_M)$ 
with respect to the graph norm of $D\colon  L^s\to L^s$.

\begin{lemma} Let $1<s<\infty$.
On manifolds with bounded geometry, the $H_1^s$-norm is equivalent
to the norm $\Vert \phi\Vert_s+\Vert \nabla \phi\Vert_s$.
\end{lemma}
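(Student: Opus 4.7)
The easy direction follows from a pointwise estimate: in a local orthonormal frame $(e_i)$ we have $D\phi = \sum_i e_i\cdot \nabla_{e_i}\phi$, and since Clifford multiplication by unit vectors acts by isometries, $|D\phi| \le \sqrt{n}\,|\nabla\phi|$ at each point. Integrating the $s$-th power yields $\|D\phi\|_s \le \sqrt{n}\,\|\nabla\phi\|_s$, so $\|\phi\|_{H_1^s} \le (1+\sqrt{n})(\|\phi\|_s + \|\nabla\phi\|_s)$.

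The real content is the reverse inequality $\|\nabla\phi\|_s \le C(\|\phi\|_s + \|D\phi\|_s)$. My plan is to localize and use interior $L^s$-elliptic regularity for $D$. Fix $R>0$ smaller than a third of the injectivity radius, and choose points $(x_i)_{i\in I}$ as in the Covering Lemma~\ref{cover}, so that the balls $B_{2R}(x_i)$ are uniformly locally finite. On each $B_{3R}(x_i)$ we work in normal coordinates together with a synchronous trivialization of $\Si_M$ (parallel transport along radial geodesics), identifying spinors with $\C^N$-valued functions. Because $(M,g)$ has bounded geometry, the Christoffel symbols, the connection one-forms of $\Si_M$ in this trivialization, and all their derivatives are bounded uniformly in $i$ in any $C^k$-norm on the coordinate ball of radius $3R$. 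Hence $D$ is a uniformly elliptic first-order operator with coefficients in a fixed bounded subset of $C^\infty$.

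The Calderón--Zygmund theory for first-order elliptic systems on a Euclidean ball then provides an interior $L^s$-estimate
\[
 \|\nabla\phi\|_{L^s(B_R(x_i))} \le C\bigl(\|\phi\|_{L^s(B_{2R}(x_i))} + \|D\phi\|_{L^s(B_{2R}(x_i))}\bigr)
\]
with a constant $C$ depending only on $s$, $n$ and the bounds from bounded geometry, hence independent of $i$. (Equivalently: one applies interior $W^{1,s}$-regularity to the elliptic system $D\phi = \psi$, using that $1<s<\infty$ so that singular integral operators are bounded on $L^s$.) Raising to the $s$-th power and summing over $i$, using that the balls $B_R(x_i)$ cover $M$ and that the larger balls $B_{2R}(x_i)$ are uniformly locally finite with multiplicity at most some $L_R$, gives
\[
 \|\nabla\phi\|_s^s \le \sum_i \|\nabla\phi\|_{L^s(B_R(x_i))}^s \le C^s\sum_i \bigl(\|\phi\|_{L^s(B_{2R}(x_i))}^s + \|D\phi\|_{L^s(B_{2R}(x_i))}^s\bigr) \le C^s L_R \bigl(\|\phi\|_s^s + \|D\phi\|_s^s\bigr),
\]
which is the desired estimate (up to an irrelevant constant from the $\ell^s$--$\ell^1$ comparison on two terms).

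The main obstacle is the uniformity of the constant in the interior $L^s$-estimate over all balls $B_{2R}(x_i)$. This is precisely what bounded geometry buys: in the synchronous trivialization the principal symbol of $D$ is the standard Clifford symbol and the zeroth-order perturbation together with its derivatives are bounded uniformly in $i$, so the Calderón--Zygmund constant for the associated singular integral can be chosen independently of $i$. Once this uniformity is in place, the globalization via the covering lemma is essentially formal, and the lemma follows.
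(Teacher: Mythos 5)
Your approach is exactly the one the paper has in mind (the paper's ``proof'' is just a citation of the Calder\'on--Zygmund inequality and a geometric adaptation, so you have filled in the standard details: localize, use bounded geometry for uniform constants, sum via the covering lemma). One small slip: Lemma~\ref{cover} gives that the balls $B_R(x_i)$ are pairwise \emph{disjoint}, while it is $(B_{2R}(x_i))$ and $(B_{3R}(x_i))$ that are uniformly locally finite \emph{covers}; so the inequality $\|\nabla\phi\|_s^s\le\sum_i\|\nabla\phi\|_{L^s(B_R(x_i))}^s$ is false as written. Shift all radii up by one step: apply the interior estimate from $B_{2R}(x_i)$ to $B_{3R}(x_i)$, sum using that $(B_{2R}(x_i))$ covers $M$ and that $(B_{3R}(x_i))$ has multiplicity at most $L_R$. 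With that correction the argument is complete.
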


The proof of the lemma relies on local elliptic estimates which follow from 
the Calderon-Zygmund inequality, e.g.\ \cite[Theorem 9.9]{GT}, see also 
\cite[Lemma~3.2.2]{Ammha} for the geometric adaptation.

\section{General notes on the $L^p$-spectrum} \label{sec_Lp}
 
In this section we collect general facts on the $L^p$-spectrum of the Dirac
operator. Unless stated otherwise, we only assume that $(M,g)$ is complete.

 Let $D\colon  H_1^2(M, \Si_M)=\dom\, D\subset L^2(M, \Si_M) \to L^2(M, \Si_M)$ be the
classical 
Dirac operator on $L^2$-spinors. 
The set of compactly supported spinors $C_c^\infty(M, \Si_M)$ is a core
of $D$, i.e., $D$ is the closure of $D|_{C_c^\infty(M, \Si_M)}$ w.r.t. the graph
norm $H_1^2$. If we consider the restriction $D|_{C_c^\infty(M, \Si_M)}$ and
complete it w.r.t. the graph norm $\Vert \phi\Vert_{H_1^s}=\Vert
\phi\Vert_s+\Vert D \phi\Vert_s$ for $1\leq  s < \infty$, then we
obtain for each $s$ a closed Dirac operator $D_s\colon  H_1^s=\dom\, D_s \subset L^s
\to L^s$. 

For $s=\infty$ we define $D_\infty\colon  H_1^\infty  \to L^\infty$, $\psi\mapsto D_\infty \psi$ by $(D\phi, \psi)=(\phi, D_\infty \psi)$ for all $\phi\in C_c^\infty(M, \Si_M)$. Then $D_\infty$ is a closed, continuous extension of
$D|_{C_c^\infty(M, \Si_M)}$ but $C_c^\infty(M, \Si_M)$ is in general no longer a
core for this operator. Note that in contrast to that, in the standard
literature for $L^p$-theory of the Laplacian, e.g.
\cite{davies_simon_taylor_88}, the operator for $s=\infty$ is directly defined to be as the
adjoint operator for $s=1$. For $s<\infty$ one can define $D_s$ distributional as well and the resulting operator coincides with the definition given above as will be seen in Lemma~\ref{lem1}.

Next, we can examine the adjoint of the operator $D_s\colon L^s\to L^s$ with respect
to the duality pairing $\langle .,.\rangle\colon  L^s\times (L^s)^*\to \C$ whose
restriction to compactly supported spinors coincides with the hermitian
$L^2$-product. We use the convention that this pairing is antilinear in the
second
component. The adjoint $D_s^*$ is an operator in $(L^s)^*$. For $1\leq s<\infty$ and
$s^{-1}+(s^*)^{-1}=1$, $(L^s)^*=L^{s^*}$ whereas $(L^\infty)^*$ is
larger than $L^1$. From the formal
self-adjointness of $D$ we see, that $D_{s^*}|_{C_c^\infty(M, \Si_M)} =
D_s^*|_{C_c^\infty(M, \Si_M)}$. Moreover, we have

\begin{lem}\label{lem1}  
For all $\phi\in H_1^s$ and $\psi\in H_1^{s^*}$, $1\leq   s \leq \infty$,  we have
\[ (D_s\phi,\psi)=(\phi, D_{s^*}\psi).\] 
\end{lem}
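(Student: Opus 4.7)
The plan is to reduce the identity to the case where both $\phi$ and $\psi$ are smooth and compactly supported, where the formal self-adjointness gives $(D\phi,\psi)=(\phi,D\psi)$, and then pass to the limit using the density of $C_c^\infty(M,\Si_M)$ in $H_1^t$ for $t<\infty$. The only subtlety is the endpoint $s=\infty$ (or $s^*=\infty$), because $C_c^\infty$ is in general not dense in $H_1^\infty$; there the defining duality relation for $D_\infty$ has to replace the approximation step.

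First I would handle the case $s,s^*\in[1,\infty)$. By the density lemma, one picks $\phi_n,\psi_n\in C_c^\infty(M,\Si_M)$ with $\phi_n\to \phi$ in $H_1^s$ and $\psi_n\to\psi$ in $H_1^{s^*}$. Formal self-adjointness yields $(D\phi_n,\psi_n)=(\phi_n,D\psi_n)$. Splitting
\[
|(D_s\phi,\psi)-(D\phi_n,\psi_n)|\le \|D_s\phi-D\phi_n\|_s\|\psi\|_{s^*}+\|D\phi_n\|_s\|\psi-\psi_n\|_{s^*}
\]
by Hölder's inequality, and analogously for the right-hand side, both sides of the desired identity converge to their limiting expressions.

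For the mixed endpoint $s=1$, $s^*=\infty$, only $\phi$ can be approximated in $C_c^\infty$. By the very definition of $D_\infty$, for every $\eta\in C_c^\infty(M,\Si_M)$ and every $\chi\in H_1^\infty$ one has $(D\eta,\chi)=(\eta,D_\infty\chi)$. Choose $\phi_n\in C_c^\infty(M,\Si_M)$ with $\phi_n\to\phi$ in $H_1^1$ and fix $\psi\in H_1^\infty$. Then $(D\phi_n,\psi)=(\phi_n,D_\infty\psi)$. Since $\psi,D_\infty\psi\in L^\infty$ while $\phi_n\to\phi$ and $D\phi_n\to D_1\phi$ in $L^1$, Hölder's inequality lets me pass to the limit on both sides and conclude $(D_1\phi,\psi)=(\phi,D_\infty\psi)$.

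The remaining case $s=\infty$, $s^*=1$ follows by exchanging the roles of the two arguments and invoking the antilinearity of the pairing in the second slot. The previous case, applied with $\phi$ and $\psi$ interchanged, gives $(D_1\psi,\phi)=(\psi,D_\infty\phi)$. Taking complex conjugates and using $(a,b)=\overline{(b,a)}$ yields $(\phi,D_1\psi)=(D_\infty\phi,\psi)$, which is exactly the required identity. I do not expect any substantial obstacle; the only point to keep in mind is that at the $s=\infty$ endpoint the identity must be obtained from the defining duality of $D_\infty$ rather than by direct approximation.
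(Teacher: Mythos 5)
Your proposal follows essentially the same route as the paper's proof: approximate by $C_c^\infty$ and use formal self-adjointness when $1<s<\infty$, invoke the defining duality of $D_\infty$ (together with density in $H_1^1$) for the endpoint $s=1$, and obtain $s=\infty$ by exchanging the roles of $s$ and $s^*$. The argument is correct and complete.
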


\begin{proof}For $1<s< \infty$, let $\phi_i, \psi_j\in C_c^\infty(M, \Si_M)$  with $\phi_i\to \phi$
in $H_1^s$ and $\psi_j\to \psi$ in $H_1^{s^*}$. Then,
\begin{align*}
\int_M \langle D_s\phi,\psi\rangle \vo_g\leftarrow \int_M \langle
D_s\phi_i,\psi_j\rangle \vo_g = \int_M \langle \phi_i,D_{s^*}\psi_j\rangle
\vo_g\to
\int_M \langle \phi,D_{s^*}\psi\rangle \vo_g
\end{align*}
as $i,j\to \infty$.\\
Let now $s=1$. For $\phi\in C_c^\infty(M, \Si_M)$ the equality follows from the distributional definition of $D_\infty$. The rest follows since $C_c^\infty(M, \Si_M)$ is dense in $H_1^1$. The remaining case $s=\infty$ just follows from the last one by interchanging $s$ and $s^*$.
\end{proof}

\begin{lem} \label{lem2}   For all $1\leq s<\infty$ the operators $D_{s^*}$ and
$D_s^*$ coincide.
\end{lem}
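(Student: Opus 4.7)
The plan is to establish the two inclusions $\dom(D_{s^*}) \subset \dom(D_s^*)$ and $\dom(D_s^*) \subset \dom(D_{s^*})$, showing that the two operators agree on the common domain along the way. The forward inclusion is immediate from Lemma~\ref{lem1}: for any $\psi\in H_1^{s^*}=\dom(D_{s^*})$ the identity $(D_s\phi,\psi)=(\phi,D_{s^*}\psi)$ holds for all $\phi\in \dom(D_s)=H_1^s$, which is exactly the defining property of the adjoint. Hence $\psi\in \dom(D_s^*)$ and $D_s^*\psi=D_{s^*}\psi$.

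For the reverse inclusion, let $\psi\in \dom(D_s^*)\subset (L^s)^*$. Recall that for $1\leq s<\infty$ we have $(L^s)^*=L^{s^*}$, so in particular $\psi\in L^{s^*}$. By definition there exists $\eta\in L^{s^*}$ with
\[
(D_s\phi,\psi)=(\phi,\eta) \qquad \text{for all } \phi\in \dom(D_s)=H_1^s.
\]
Since $C_c^\infty(M,\Si_M)\subset H_1^s$ and $D_s$ extends $D$ on $C_c^\infty$, this reads $(D\phi,\psi)=(\phi,\eta)$ for every test spinor $\phi\in C_c^\infty(M,\Si_M)$. But this is precisely the statement that $D\psi=\eta$ in the distributional sense. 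Thus $\psi$ is a distributional spinor with $\psi\in L^{s^*}$ and $D\psi=\eta\in L^{s^*}$, which by the definition of $H_1^{s^*}$ as the space of distributional spinors with $\psi,D\psi\in L^{s^*}$ means $\psi\in H_1^{s^*}=\dom(D_{s^*})$ and $D_{s^*}\psi=\eta=D_s^*\psi$.

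The reason no further analytic input (density, approximation, elliptic regularity, etc.) is needed is that the paper defines $H_1^{s^*}$ as the \emph{maximal} (distributional) domain in $L^{s^*}$, and for $s^*<\infty$ the preceding density lemma identifies this with the closure of $C_c^\infty$ in the graph norm, while for $s^*=\infty$ (i.e.\ $s=1$) the operator $D_\infty$ is by definition exactly the distributional extension. The only mild point to keep track of is the endpoint case $s=1$: here $(L^1)^*=L^\infty$, so one must verify that $\psi\in \dom(D_1^*)$ lies in $L^\infty$ (which it does, as an element of $(L^1)^*$) and that the distributional characterization of $D_\infty$ on $H_1^\infty$ matches what the adjoint relation produces; both are immediate from the definitions in the excerpt. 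No further obstacles arise, and the lemma follows.
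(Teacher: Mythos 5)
Your proof is correct and takes essentially the same route as the paper's: Lemma~\ref{lem1} gives the inclusion $H_1^{s^*}\subset\dom D_s^*$ with agreement of the operators, and the reverse inclusion follows by restricting the adjoint identity to $\phi\in C_c^\infty(M,\Si_M)$ and reading it as the distributional equation $D\psi=\eta\in L^{s^*}$, which places $\psi$ in $H_1^{s^*}$. The extra care you take with the endpoint $s=1$ (so $s^*=\infty$, where $D_\infty$ is by definition the distributional extension rather than a graph-norm closure) is a welcome clarification that the paper leaves implicit.
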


\begin{proof}
For $\psi\in H_1^{s^*}$ Lemma~\ref{lem1} yields $(D_s\phi, \psi)=(\phi,
D_{s^*}\psi)$   for all $\phi\in H_1^s=\dom\, D_s$. This implies $\psi\in \dom\,
D_s^*$ and $D_s^*\psi=D_{s^*}\psi.$ Hence, $H_1^{s^*}\subset \dom\, D_s^*$ and
$D_s^*|_{H_1^{s^*}}=D_{s^*}\colon  H_1^{s^*}\subset L^{s^*}\to L^{s^*}$.
It remains to show that $\dom\, D_s^*\subset H_1^{s^*}$: Let $\psi\in
\dom\, D_s^*\subset (L^s)^*=L^{s^*}$. Then there is a $\rho\in
L^{s^*}$ such that for all $\phi\in \dom\, D_s$ it holds $(D_s\phi, \psi)=
(\phi, \rho)$. In particular, this is true for all $\phi\in C_c^\infty(M,
\Si_M)$. In other words $D_s^*\psi=\rho$ in the sense of distributions. Thus, $\psi\in H_1^{s^*}$.
\end{proof}

Since $\phi\in H_1^s\cap H_1^r$ implies $D_s\phi=D_r\phi$ we often denote all
those
Dirac operators in the following just by $D$.

Moreover, a closed operator $P\colon \dom\, P\subset V_1\to V_2$ 
between Banach spaces $V_i$, and with dense domain $\dom\, P$, 
will be called invertible if there exists a bounded inverse $P^{-1}\colon V_2\to V_1$.
We will use the phrase ``$P$ has a bounded inverse'' synonymously.

\begin{lem}\label{conj_spectrum}   
Let $1 \leq  s<\infty$. 
\begin{itemize}
\item[(i)] If $\overline{\mu}$ is in the $L^{s^*}-$spectrum of the Dirac
operator where $(s^*)^{-1}+s^{-1}=1$, then $\mu$ is in its $L^s-$spectrum.
\item[(ii)] Let $D_s-\mu$ be invertible. Then,
$(D_{s^*}-\bar{\mu})^{-1}=((D_s-\mu)^{-1})^*$ and
$\Vert (D_{s^*}-\bar{\mu})^{-1}\Vert=\Vert (D_s-\mu)^{-1}\Vert$.
\end{itemize}
\end{lem}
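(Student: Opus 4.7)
The plan is to reduce both parts of the lemma to Lemma~\ref{lem2} together with the standard Banach-space duality for closed operators with bounded inverse.

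First I would observe that multiplication by $\mu$ is a bounded operator on $L^s$ whose Banach-space adjoint, with respect to the duality pairing $\langle\cdot,\cdot\rangle\colon L^s\times L^{s^*}\to\mC$ used throughout Section~\ref{sec_Lp}, is multiplication by $\bar{\mu}$ on $L^{s^*}$. Since $D_s$ is densely defined (because $C_c^\infty(M,\Si_M)\subset H_1^s\subset L^s$ is dense for $1\leq s<\infty$) and closed, so is $D_s-\mu$, and its Banach-space adjoint satisfies $(D_s-\mu)^*=D_s^*-\bar{\mu}$. By Lemma~\ref{lem2} this equals $D_{s^*}-\bar{\mu}$.

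For (ii), I would appeal to the following standard fact from functional analysis on Banach spaces: if $A\colon\dom\,A\subset X\to Y$ is a closed, densely defined operator between Banach spaces that admits a bounded inverse $A^{-1}\in\mathcal{B}(Y,X)$, then $A^*\colon\dom\,A^*\subset Y^*\to X^*$ is also bijective and $(A^*)^{-1}=(A^{-1})^*\in\mathcal{B}(X^*,Y^*)$ with $\|(A^*)^{-1}\|=\|A^{-1}\|$ (e.g.\ Kato's perturbation theory book). Applied to $A=D_s-\mu\colon H_1^s\to L^s$, whose adjoint we just identified with $D_{s^*}-\bar{\mu}$, this yields both assertions of (ii) simultaneously.

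Part (i) then follows by contraposition: if $\mu$ is not in the $L^s$-spectrum of $D$, then $D_s-\mu$ is invertible, hence by (ii) so is $D_{s^*}-\bar{\mu}$, so $\bar{\mu}$ is not in the $L^{s^*}$-spectrum. The only subtlety worth flagging is the boundary case $s=1$, $s^*=\infty$: the operator $D_\infty$ was defined by exactly the duality $(L^1)^*=L^\infty$ invoked above, so that Lemma~\ref{lem2} already covers this case and the functional-analytic adjoint-inverse statement applies without any reflexivity assumption on the underlying Banach spaces. Hence no essential obstacle remains; the whole argument is purely formal once Lemma~\ref{lem2} is in hand.
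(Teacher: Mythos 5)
Your proposal is correct and takes essentially the same route as the paper: identify $(D_s-\mu)^*$ with $D_{s^*}-\bar{\mu}$ via Lemma~\ref{lem2}, then observe that a densely defined closed operator with bounded inverse has an adjoint with bounded inverse equal to the adjoint of the inverse, with matching operator norm, and obtain (i) by contraposition. The only difference is cosmetic: you cite the adjoint-inverse theorem (Kato) as a black box, whereas the paper proves exactly that fact inline (showing $E^*$ is a two-sided inverse of $D^*$ and then quoting $\|A\|=\|A^*\|$ for the norm equality), so no gap remains.
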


\begin{proof}We prove this for  $\mu=0$. For arbitrary $\mu$ this is
done analogously.\\
Assume that $0$ is not in the $L^s$-spectrum of $D$, i.e., it has a bounded
inverse $E=D^{-1}\colon L^s\to L^s$ with range $\ran\, E=H_1^s$.
Let $\phi\in L^{s^*}$. Since $E$ is bounded, $f\colon L^s \to \C, \rho\mapsto
(E\rho, \phi)$ is a bounded functional and, thus, $f$ is in the dual space of $L^s$, i.e.,  there is $\psi\in L^{s^*}$ with
$(\rho,\psi)=f(\rho)=(E\rho,\phi)$ for all $\rho\in L^s$. Hence, $\phi\in
\dom\, E^*$, i.e., $\dom\, E^*=L^{s^*}$. 

Now we can estimate for all $\phi\in H_1^s$ and all $\psi\in L^{s^*}$ that 
$(D\phi, E^*\psi)=(ED\phi,\psi)=(\phi, \psi)$ which implies $E^*\psi\in
\dom\, D^*$ and $D^*E^*\psi=\psi$.
Thus, $\ran\, D^*=L^{s^*}$ and $D^*E^*=\Id\colon L^{s^*}\to L^{s^*}$.

If $\rho\in L^s$ and $\phi\in \dom\, D^*$, we get
$(\rho, E^*D^*\phi)= (E\rho, D^*\phi)=(DE\rho,\phi)=(\rho,\phi)$. Hence,
$E^*D^*=\Id\colon \dom\, D^*\to \dom\, D^*$. Together with the corresponding statement from
above this gives that $(D^{-1})^*=(D^*)^{-1}$. Thus,
 $0$ is not in the $L^{s^*}$-spectrum of $D$. This proves (i) and the first claim of (ii).
The operator norm of an operator and its adjoint
coincide, see \cite[Thm VI.2]{RS}. Thus, the equality of the operator norms
follows.
 \end{proof}
 
\begin{cor}\label{inv_interpolation}  
If $D\colon H_1^q\to L^q$ has a bounded inverse for some $q\in
(1,\infty)$. Then as an operator from $H_1^s \to L^s$ it has a bounded inverse
for all $s\in [q_1, q_2]$  where $q_1=\min\{q,q^*\}$, $q_2=\max\{q,q^*\}$, and
 $(q^*)^{-1}+q^{-1}=1$. In particular, the
$L^2$-spectrum of $D$ is a subset of the $L^q$-spectrum.
\end{cor}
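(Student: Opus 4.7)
The plan is to apply the Riesz--Thorin interpolation theorem (Theorem~\ref{RTint}) to the bounded inverse $E_q := (D_q)^{-1}\colon L^q\to L^q$, using Lemma~\ref{conj_spectrum}(ii) to furnish a matching bound at the conjugate exponent. Applying that lemma with $\mu = 0$ produces a bounded inverse $E_{q^*} := (D_{q^*})^{-1}\colon L^{q^*}\to L^{q^*}$ equal to the Banach adjoint $E_q^*$ with $\|E_{q^*}\| = \|E_q\|$.

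The central step is to verify that $E_q$ and $E_{q^*}$ coincide on the common dense subspace $C_c^\infty(M,\Si_M)$. For $\phi \in C_c^\infty(M,\Si_M)$, set $\psi := E_q\phi \in H_1^q$; by Lemma~\ref{lem1}, $(\psi, D\chi) = (\phi, \chi)$ for every $\chi \in C_c^\infty(M,\Si_M)$, so $|(\psi, D\chi)| \leq \|\phi\|_{q^*}\,\|\chi\|_q \leq \|E_q\|\,\|\phi\|_{q^*}\,\|D\chi\|_q$, using $\chi = E_q(D\chi)$. Because the unnumbered lemma opening this appendix gives density of $C_c^\infty(M,\Si_M)$ in $H_1^q$, and $D_q$ is a topological isomorphism onto $L^q$, the range $D(C_c^\infty(M,\Si_M))$ is dense in $L^q$; the functional $g\mapsto (\psi,g)$ therefore extends uniquely to a bounded functional on $L^q$ of norm at most $\|E_q\|\,\|\phi\|_{q^*}$, and Riesz representation produces $v\in L^{q^*}$ with $(v,g)=(\psi,g)$ on $D(C_c^\infty(M,\Si_M))$ and $\|v\|_{q^*}\leq \|E_q\|\,\|\phi\|_{q^*}$. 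A distributional integration by parts gives $Dv = \phi$, whence $v = E_{q^*}\phi$ by invertibility on $L^{q^*}$. An elliptic-regularity argument, exploiting that both $\psi$ and $E_{q^*}\phi$ are smooth distributional solutions of $D\,\cdot\, = \phi$ agreeing against the dense subset $D(C_c^\infty(M,\Si_M))$, then yields $\psi = E_{q^*}\phi$ in $L^1_{loc}$. In particular $E_q\phi \in L^q\cap L^{q^*}$ with both norm bounds.

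Setting $T := E_q|_{C_c^\infty(M,\Si_M)}$, Theorem~\ref{RTint} extends $T$ to a bounded operator $\tilde T\colon L^s\to L^s$ for every $s\in[q_1,q_2]$. The identification $\tilde T = (D_s)^{-1}$ is immediate from density and closedness: on $C_c^\infty(M,\Si_M)$ one has $D\tilde T\phi = \phi$, and because $C_c^\infty(M,\Si_M)$ is dense in $L^s$ (note $s<\infty$ since $q\in(1,\infty)$) while $D_s$ is closed as the graph-norm closure of $D|_{C_c^\infty}$, the identity $D\tilde T = \mathrm{Id}$ extends to all of $L^s$; the symmetric argument using density of $C_c^\infty(M,\Si_M)$ in $H_1^s$ gives $\tilde T D = \mathrm{Id}_{H_1^s}$. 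The ``in particular'' statement follows because $\min\{q,q^*\}\leq 2\leq \max\{q,q^*\}$, so $2\in[q_1,q_2]$, and applying the whole argument with $D-\mu$ in place of $D$ yields $\Spec^M_{L^2}(D)\subseteq \Spec^M_{L^q}(D)$.

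The main obstacle is the pointwise identification $\psi = E_{q^*}\phi$ in the second paragraph: the difference is a smooth classical solution of $D\zeta=0$ lying in $L^q+L^{q^*}$, and $D$ is not obviously injective on that sum space, so some care is required to upgrade agreement on $D(C_c^\infty(M,\Si_M))$ to an identity in $L^1_{loc}$. When a Green function is available (e.g.\ under the bounded-geometry hypothesis of Section~\ref{sec.green}) a cleaner route is to realize both $\psi$ and $E_{q^*}\phi$ as the same integral against the unique Green kernel, bypassing this subtlety altogether.
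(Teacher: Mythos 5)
You have correctly reconstructed the paper's route: the published proof is literally a one-liner, invoking the Riesz--Thorin theorem (Theorem~\ref{RTint}) with $\mathcal{D}=C_c^\infty(M,\Si_M)$ together with Lemma~\ref{conj_spectrum}. The genuine value of your write-up is that it flags what that one-liner glosses over: to feed $T:=(D_q)^{-1}|_{C_c^\infty}$ into Theorem~\ref{RTint} one must know $T\phi\in L^q\cap L^{q^*}$ with the right $L^{q^*}$-bound, i.e.\ that $(D_q)^{-1}\phi$ and $(D_{q^*})^{-1}\phi$ coincide as locally integrable sections. Lemma~\ref{conj_spectrum}(ii) identifies $(D_{q^*})^{-1}$ with the Banach adjoint of $(D_q)^{-1}$ and gives equality of operator norms, but says nothing directly about agreement of the two resolvents on $C_c^\infty$.

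Your bridging argument is precisely where things do not close, as you note yourself: the difference $w:=(D_q)^{-1}\phi-(D_{q^*})^{-1}\phi$ is a smooth solution of $Dw=0$ lying only in $L^q+L^{q^*}$, and invertibility of $D$ on $L^q$ and on $L^{q^*}$ separately does not rule out a nonzero harmonic spinor in the sum; density of $D(C_c^\infty)$ in $L^q$ kills $w$ only if $w\in L^{q^*}$, and vice versa, and neither containment is known. A clean repair that stays within the paper's toolkit is to interpolate a \emph{different} operator: set $\mathcal{D}':=D(C_c^\infty)$, which is dense in $L^q$ and in $L^{q^*}$ because $D_q$ and $D_{q^*}$ are topological isomorphisms onto $L^q$, $L^{q^*}$ with common core $C_c^\infty$, and define $T(D\chi):=\chi$. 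This $T$ is well-defined by injectivity of $D_q$, its values lie in $C_c^\infty\subset L^q\cap L^{q^*}$ by construction, and $\Vert T(D\chi)\Vert_q\leq\Vert(D_q)^{-1}\Vert\,\Vert D\chi\Vert_q$, $\Vert T(D\chi)\Vert_{q^*}\leq\Vert(D_{q^*})^{-1}\Vert\,\Vert D\chi\Vert_{q^*}$. Theorem~\ref{RTint} then gives $\Vert\chi\Vert_s\leq C\Vert D\chi\Vert_s$ for $\chi\in C_c^\infty$ and $s\in[q_1,q_2]$, hence by density for all $\phi\in H_1^s$; since $s\in[q_1,q_2]$ forces $s^*\in[q_1,q_2]$ the same estimate holds at $s^*$, and Lemma~\ref{inv-gap}(1) and (3) upgrade the pair of a priori estimates to invertibility of $D_s$. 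Finally, note that your Green-function alternative is circular in this generality: Proposition~\ref{prop.green.exists} assumes $L^2$-invertibility, which for $\mu=0$ is part of what the corollary is meant to deliver.
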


\begin{proof}
This Lemma follows directly from the Riesz-Thorin Interpolation
Theorem~\ref{RTint}
 (using $\mathcal{D}=C_c^\infty(M, \Si_M)$) and Lemma~\ref{conj_spectrum}.
\end{proof}

\begin{lemma}\label{resolv_analytic} 
Let $1\leq s\leq \infty$. Let $R_s=\mC\setminus \spec_{L^s}(D)$ be the resolvent
set of $D\colon  L^s\to L^s$. Then, the resolvent
 \[ \mu\in  R_s \mapsto (D-\mu)^{-1}\in \mathcal{B}(L^s) \] is analytic, i.e., 
the map is locally given by a convergent power series with coefficients in
$\mathcal{B}(L^s)$. Here, $\mathcal{B}(L^s)$ denotes the set of bounded
operators from $L^s$ to itself. 
\end{lemma}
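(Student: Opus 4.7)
The plan is to apply the standard resolvent identity / Neumann series argument, which works verbatim once one checks that $(D_s-\mu_0)^{-1}\in\mathcal{B}(L^s)$ for each $\mu_0\in R_s$ (this is the definition of $R_s$) and that $D_s-\mu$ commutes with scalars in the appropriate sense even when $s=\infty$ and $D_\infty$ is not densely defined.

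Fix $\mu_0\in R_s$ and set $T:=(D_s-\mu_0)^{-1}\in\mathcal{B}(L^s)$. First I would show the algebraic identity
\[ D_s-\mu=(D_s-\mu_0)-(\mu-\mu_0)\Id=(D_s-\mu_0)\bigl(\Id-(\mu-\mu_0)T\bigr)\]
as operators on $\dom D_s=H_1^s$ (for $s<\infty$) respectively on $\dom D_\infty=H_1^\infty$ (for $s=\infty$); this just uses that $T$ maps $L^s$ into $\dom D_s$ and that $(D_s-\mu_0)T=\Id_{L^s}$, $T(D_s-\mu_0)=\Id_{\dom D_s}$. Next, for $|\mu-\mu_0|<\|T\|^{-1}$ the operator $\Id-(\mu-\mu_0)T\in\mathcal{B}(L^s)$ is invertible with bounded inverse given by the absolutely convergent Neumann series
\[ S(\mu):=\sum_{n=0}^{\infty}(\mu-\mu_0)^{n}T^{n},\]
and $S(\mu)$ maps $L^s$ into $\ran T=\dom D_s$.

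Then I would verify that $(D_s-\mu)^{-1}=S(\mu)\,T=\sum_{n=0}^\infty(\mu-\mu_0)^n T^{n+1}$ by checking $(D_s-\mu)S(\mu)T=\Id_{L^s}$ and $S(\mu)T(D_s-\mu)=\Id_{\dom D_s}$; both identities follow directly from the factorization above. In particular $\mu\in R_s$ for all such $\mu$, and the resolvent is given on the disk $|\mu-\mu_0|<\|T\|^{-1}$ by an explicit power series with coefficients $T^{n+1}\in\mathcal{B}(L^s)$, whose norms satisfy $\|T^{n+1}\|\le\|T\|^{n+1}$. This yields analyticity at $\mu_0$, and since $\mu_0\in R_s$ was arbitrary the claim follows.

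No real obstacle arises; the only mild point requiring attention is the case $s=\infty$, where $\dom D_\infty$ need not be dense in $L^\infty$. This does not affect the argument, because the Neumann series only requires boundedness of $T$ as an operator from $L^\infty$ to $L^\infty$, and the two one-sided identities $(D_\infty-\mu_0)T=\Id$ on $L^\infty$ and $T(D_\infty-\mu_0)=\Id$ on $\dom D_\infty$ are exactly what the invertibility of $D_\infty-\mu_0$ provides.
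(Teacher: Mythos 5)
Your proposal is correct and follows essentially the same Neumann-series argument that the paper uses (expanding $(D-\mu)^{-1}$ around $\mu_0$ in powers of $(\mu-\mu_0)^n\,T^{n+1}$ with $T=(D-\mu_0)^{-1}$); the paper merely cites \cite{hirzebruch_scharlau} for this step, while you spell it out and handle the $s=\infty$ domain issue explicitly. One small slip: it is $S(\mu)T$, not $S(\mu)$ itself, that maps $L^s$ into $\dom D_s$, and verifying this for the infinite sum uses the closedness of $D_s$ — a point worth making explicit, but it does not affect the correctness of the argument.
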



\begin{proof} The proof is done similar as in the case of bounded operators \cite[Satz
23.4]{hirzebruch_scharlau}:  Choose $ \mu_0\in R_s$ and $\mu\in \mC$ such that $|\mu-\mu_0|<\Vert (D-\mu_0)^{-1}\Vert^{-1}$. Then, 
one calculates that $D-\mu$ is invertible as well, see the proof of \cite[Lemma
23.2]{hirzebruch_scharlau}. Here we used the fact the operator $(D-\mu)^{-1}$ and $(D-\mu_0)^{-1}$ have the common core $C_c^\infty(M, \Si_M)$. Then, 
\[ (D-\mu)^{-1}=\sum_{n=0}^\infty \left( (D-\mu_0)^{-1}\right)^{n+1}\left( \mu-\mu_0\right)^{n}.\]
\end{proof}

For rounding up our presentation we will next add a lemma not needed in our context but maybe helpful to other applications.
 \begin{lem}\label{inv-gap}
\begin{enumerate}
\item The operator $D\colon  H_1^s\subset L^s\to L^s$, $s\in [1,\infty]$, is an invertible map onto its
image 
if and only if there is a constant $C>0$ with $\Vert D\phi\Vert_{s}\geq C\Vert
\phi \Vert_s$ for all $\phi\in H_1^s$. 
\item Under the above conditions the image $D(H_1^s)$ is closed.
\item Let $s^{-1}+(s^*)^{-1}=1$, $s<\infty$, and assume the conditions from above. Then~$D$ is surjective if and 
only if there is a $C>0$ with 
$\Vert D\phi\Vert_{s^*}\geq C\Vert \phi \Vert_{s^*}$ for all $\phi\in
H_1^{s^*}$. 
\end{enumerate}
\end{lem}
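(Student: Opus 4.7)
The plan is to treat parts (1) and (2) as formal consequences of $D_s$ being a closed operator between Banach spaces endowed with the graph norm, and to reduce part (3) to a Hahn--Banach argument combined with the adjoint identification from Lemma~\ref{lem2}.

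For (1), recall that $H_1^s$ carries the graph norm $\|\phi\|_{H_1^s}=\|\phi\|_s+\|D\phi\|_s$. An invertible map $D_s\colon H_1^s\to D_s(H_1^s)$ with bounded inverse therefore satisfies $\|\phi\|_s+\|D\phi\|_s\le K\|D\phi\|_s$ for some $K>1$, which rearranges to the lower bound $\|D\phi\|_s\ge C\|\phi\|_s$ with $C=(K-1)^{-1}$. Conversely, such a lower bound gives injectivity and shows that the formal inverse on $D_s(H_1^s)$ satisfies $\|D_s^{-1}\psi\|_{H_1^s}\le(1+C^{-1})\|\psi\|_s$. For (2), given this lower bound, whenever $(D_s\phi_n)$ is Cauchy in $L^s$ the inequality forces $(\phi_n)$ to be Cauchy in $L^s$ as well; both sequences have limits, and closedness of $D_s$ (built in by construction, or, for $s=\infty$, via its distributional definition) forces the limit of $D_s\phi_n$ to lie in $D_s(H_1^s)$.

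For (3), the forward direction is the easy one: if $D_s$ is surjective then by (1) it is bijective, and the closed graph theorem produces a bounded inverse. By Lemma~\ref{conj_spectrum}(ii) (which applies since $s<\infty$) the adjoint $D_s^*=D_{s^*}$ is then invertible, and applying (1) at level $s^*$ yields the desired lower bound $\|D\phi\|_{s^*}\ge C\|\phi\|_{s^*}$.

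For the converse, suppose both lower bounds hold. By (2), $D_s(H_1^s)$ is closed in $L^s$. If it were a proper subspace, then since $s<\infty$ we have $(L^s)^*=L^{s^*}$, and Hahn--Banach would supply some $0\ne\psi\in L^{s^*}$ annihilating $D_s(H_1^s)$; in particular $(D_s\phi,\psi)=0$ for every $\phi\in C_c^\infty(M,\Si_M)$. By the distributional definition of the adjoint together with Lemma~\ref{lem2}, this means $\psi\in\dom D_s^*=H_1^{s^*}$ with $D_{s^*}\psi=0$. The assumed lower bound on $D_{s^*}$ then forces $\psi=0$, contradicting $\psi\ne 0$. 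The main obstacle here is to correctly identify the annihilator of the image as $\ker D_{s^*}$ in the edge case $s=1$, $s^*=\infty$, where $D_\infty$ is defined by duality rather than as an $L^s$-closure; this relies directly on the definition of $D_\infty$ together with Lemma~\ref{lem1}, both of which have already been set up in the preceding discussion.
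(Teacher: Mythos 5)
Your proof is correct and follows essentially the same route as the paper's. The one cosmetic difference is in part (2): you argue directly that a Cauchy sequence in the image pulls back to a Cauchy sequence in $L^s$ and then invoke closedness of $D_s$, whereas the paper notes that $D$ is a Banach-space isomorphism from $(H_1^s,\|\cdot\|_{H_1^s})$ onto its image with the $L^s$-norm and concludes completeness of the image from completeness of $H_1^s$ — the same fact in different clothing. Your caution about the edge case $s=1$, $s^*=\infty$ is well placed but already handled by Lemma~\ref{lem2}, which is stated for $1\le s<\infty$ and thus identifies $D_1^* = D_\infty$; the paper invokes exactly this.
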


\begin{proof}
(1) The proof is straightforward.\\
(2) The operator $D\colon H_{1}^s\to D(H_{1}^s)$, where the latter space is equipped with the $L^s$-norm, is a bijective bounded linear map. Hence, $D(H_{1}^s)$ is a complete subspace of $L^s$ and thus closed.\\
(3) Suppose that  $D(H_1^s)$ is a proper subspace of $L^s$. Due 
to Hahn-Banach there is a non-zero continuous functional 
$\psi\colon L^s\to \mC$ vanishing on 
$D(H_1^s)$. We interpret $\psi$ as an element in $L^{s^*}$ using the Riesz 
representation theorem, i.e.\ $\psi\in L^{s^*}$ is orthogonal on $D(H_1^s)$.
Then, $\psi\in \dom\, (D_{s})^*$, and we even have $D_{s}^*\psi=0$. Hence, by Lemma~\ref{lem2} $\psi\in
H_1^{s*}$.
This contradicts the estimate.

Now assume that $D$ is surjective. 
Then there is a bounded operator $D^{-1}\colon L^s\to L^s$, inverse to $D$.
Thus $(D^{-1})^*\colon L^{s^*}\to L^{s^*}$ is bounded as well, and  $(D^{-1})^*$
is the inverse of $D^*\colon H_1^{s^*}\to L^{s^*}$. The fact that 
the latter map has a bounded inverse is equivalent to the existence of
a constant $C>0$ with $\Vert D\phi\Vert_{s^*}\geq C\Vert \phi \Vert_{s^*}$. 

\end{proof}

\begin{rem}\label{spec_closed} The $L^s$-spectrum of the Dirac operator $D$ on a
closed manifold $(M^m,g)$ is independent of $s$. We sketch the proof: Let
$\phi$ be an $L^2$-eigenvalue of $D$. Then regularity theory implies that
$\phi\in C^\infty(M, \Si_M)$ and, hence, $\phi\in L^s$ for all $1\leq s\leq
\infty$. In particular, $\Spec_{L^2}^M(D)\subset \Spec_{L^s}^M(D)$. Let now
$\mu\not\in \Spec_{L^2}^M(D)$, i.e., $(D-\mu)^{-1}\colon  L^2\to L^2$ is bounded. Let
$G(x,y)$ be the unique Green function of $D-\mu$, see
Proposition~\ref{Green-compact}. Then, $\int_M |G(.,y)|^2 dy$ is bounded
uniformly in $y$. H\"older's inequality implies that also $\int_M |G(.,y)| dy$
is bounded uniformly in $y$. Hence, $(D-\mu)^{-1}\colon  L^1\to L^1$ is a bounded
operator. Then interpolation gives that
$(D-\mu)^{-1}\colon  L^s\to L^s$ is bounded for all $1\leq s<2$. Because of $\Spec_{L^2}(D)\subset \R$ the same is true for $(D-\bar{\mu})^{-1}\colon  L^s\to L^s$, and by using Lemma~\ref{conj_spectrum} we get that
$(D-\mu)^{-1}\colon  L^s\to L^s$ is bounded for all $2< s<\infty$. It remains
$s=\infty$: Let $r>m$. Then by the Sobolev Embedding Theorem 
$H_1^r\hookrightarrow L^\infty$ is bounded. Moreover, by the 
discussion  above and using the fact that $H_1^r$ carries the graph norm of $D$ we know that $(D-\mu)^{-1}\colon  L^r\to H_1^r$ is bounded for $\mu\not\in \Spec_{L^2}^M (D)$ the H\"older inequality gives that 

\[ (D-\mu)^{-1}\colon \, L^\infty \to L^r \to H_1^r\to L^\infty\]
is bounded.
\end{rem}

\begin{lem}\label{spec_D2} Let $1\leq s\leq \infty$, and let $\Spec_{L^s}^M(D)\neq \mC$. 
Then the complex number $\mu^2$ is in the $L^s$-spectrum of $D^2$ if and only if
$\mu$ or $-\mu$ is in the $L^s$-spectrum of $D$.
\end{lem}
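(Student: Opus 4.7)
\textbf{Proof plan for Lemma~\ref{spec_D2}.} The strategy is to exploit the factorization
\[ D^2-\mu^2 = (D-\mu)(D+\mu) = (D+\mu)(D-\mu), \]
which holds on $\dom(D^2)=\{\phi\in\dom(D)\colon D\phi\in\dom(D)\}$, and to track how invertibility is transported between the factor and the product. Throughout, $D$ is viewed as the closed operator $D_s\colon H_1^s\subset L^s\to L^s$ from Appendix~\ref{sec_Lp}.

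\emph{Easy direction.} Suppose neither $\mu$ nor $-\mu$ lies in $\Spec_{L^s}^M(D)$, so that the bounded inverses $(D\pm\mu)^{-1}\colon L^s\to L^s$ exist. The composition $S:=(D-\mu)^{-1}(D+\mu)^{-1}$ is then a bounded operator $L^s\to L^s$. For any $\psi\in L^s$, set $\eta=(D+\mu)^{-1}\psi\in\dom(D)$ and $\phi=(D-\mu)^{-1}\eta\in\dom(D)$; since $(D-\mu)\phi=\eta\in\dom(D)$, one has $D\phi\in\dom(D)$, i.e.\ $\phi\in\dom(D^2)$, and $(D^2-\mu^2)\phi=(D+\mu)(D-\mu)\phi=\psi$. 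The reverse composition $(D^2-\mu^2)S\psi=\psi$ on $L^s$ and $S(D^2-\mu^2)\phi=\phi$ on $\dom(D^2)$ are then checked symmetrically. Hence $\mu^2\notin\Spec_{L^s}^M(D^2)$.

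\emph{Harder direction.} Suppose $T:=(D^2-\mu^2)^{-1}\colon L^s\to L^s$ exists and is bounded. The key technical step is to show that $DT\colon L^s\to L^s$ is a bounded operator. Note that $T$ maps $L^s$ into $\dom(D^2)\subset\dom(D)$, so $DT$ is everywhere defined on $L^s$. To apply the closed graph theorem it suffices to verify closedness: if $\phi_n\to\phi$ and $DT\phi_n\to\psi$ in $L^s$, then boundedness of $T$ gives $T\phi_n\to T\phi$, and closedness of $D$ (as an operator in $L^s$) forces $T\phi\in\dom(D)$ with $DT\phi=\psi$. Hence $DT$, and therefore $(D\pm\mu)T$, is bounded on $L^s$.

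\emph{Conclusion of the harder direction.} I claim $(D-\mu)^{-1}=(D+\mu)T$. For any $\psi\in L^s$, put $\eta=(D+\mu)T\psi$; since $T\psi\in\dom(D^2)$, we have $\eta\in\dom(D)$, and $(D-\mu)\eta=(D-\mu)(D+\mu)T\psi=(D^2-\mu^2)T\psi=\psi$, giving surjectivity. Injectivity of $D-\mu$ is immediate: $(D-\mu)\phi=0$ implies $(D^2-\mu^2)\phi=(D+\mu)(D-\mu)\phi=0$, whence $\phi=0$ by invertibility of $T$. The bounded inverse is therefore $(D+\mu)T$; the same argument with the roles of $\mu$ and $-\mu$ interchanged yields $(D+\mu)^{-1}=(D-\mu)T$. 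Thus $\{\pm\mu\}\cap\Spec_{L^s}^M(D)=\emptyset$.

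\emph{Main obstacle.} The only substantive technical point is the closed graph argument for $DT$; the remainder is domain bookkeeping, relying repeatedly on the inclusion $\dom(D^2)\subset\dom(D)$ and the closedness of the $L^s$-extension of $D$. The hypothesis $\Spec_{L^s}^M(D)\neq\mC$ is used implicitly to ensure $D$ (and hence $D^2$) has a well-defined closed realization with nonempty resolvent set, so that the spectral statement for $D^2$ is meaningful.
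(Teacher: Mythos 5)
The easy direction is fine and matches the paper. In the harder direction, however, your argument rests on a hypothesis you never establish: you write ``$T$ maps $L^s$ into $\dom(D^2)\subset\dom(D)$, so $DT$ is everywhere defined,'' and you justify the inclusion $\dom(D^2)\subset\dom(D)$ only by decreeing at the outset that $\dom(D^2):=\{\phi\in\dom(D): D\phi\in\dom(D)\}$. But $D^2$ in this lemma is the operator with the maximal (distributional) domain $\{\phi\in L^s: D^2\phi\in L^s\}$, and for this operator the inclusion $\phi,D^2\phi\in L^s\Rightarrow D\phi\in L^s$ is genuinely nontrivial. Precisely this inclusion is the content of the paper's key step: it picks $\lambda\notin\Spec_{L^s}^M(D)$ (here the hypothesis $\Spec_{L^s}^M(D)\neq\mC$ enters) and uses the identity $D\psi=(D-\lambda)^{-1}(D^2-\lambda^2)\psi-\lambda\psi$ to force $D\psi\in L^s$, i.e.\ $P(L^s)\subset H_1^s$. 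Once that is in hand, the boundedness of $(D\pm\mu)P$ is automatic --- indeed, the same identity displays $DP=(D-\lambda)^{-1}\bigl(\Id+(\mu^2-\lambda^2)P\bigr)-\lambda P$ explicitly as a composition of bounded operators, so the closed graph theorem you invoke is not even needed.

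Your concluding remark, that the hypothesis ``ensures $D$ (and hence $D^2$) has a well-defined closed realization,'' misidentifies the role of the assumption: $D_s$ is closed regardless. The hypothesis is needed exactly to pass from $\psi,D^2\psi\in L^s$ to $D\psi\in L^s$ --- and the paper's follow-up remark confirms this is a real obstacle, noting that without the hypothesis one must instead invoke elliptic regularity on manifolds of bounded geometry to get the same conclusion. As written, your proof establishes the lemma only for a possibly smaller version of $D^2$, and the domain inclusion it depends on is used, not proved.
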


\begin{proof}
We start with the ``only if'' part. So assume that both $\mu$ and $-\mu$ are
not in the $L^s$-spectrum of~$D$. Then we have bounded operators
$(D-\mu)^{-1}\colon L^s\to L^s$ and $(D+\mu)^{-1}\colon L^p\to L^p$. It is then easy to
verify
that $(D-\mu)^{-1}\circ (D+\mu)^{-1}\colon L^s\to L^s$ is a bounded inverse of 
$D^2-\mu^2=(D+\mu)\circ (D-\mu)$. Thus $\mu^2$ is not in the $L^p$-spectrum of
$D^2$.

In order to prove the ``if'' statement, we assume that $\mu^2$ is not in 
the spectrum of $D^2$. Then $D^2-\mu^2$ has a bounded inverse 
$P:=(D^2-\mu^2)^{-1}\colon L^s\to L^s$. Let $\psi\in P(L^s)$. Then $\psi\in L^s$ and $D^2\psi\in L^s$. Next we will show that this implies $D\psi\in L^s$. For that we choose $\lambda\not\in \Spec_{L^s}^M(D)$. Then
$D\psi=(D-\lambda)^{-1}(D^2-\lambda^2)\psi-\lambda\psi$, and hence $D\psi\in L^s$. Thus, $P(L^s)\subset H_{1}^s$.
Hence $Q_1:=(D\pm \mu)\circ P$ 
is a bounded operator with $\dom\, Q_1=L^s$, and one easily checks that this a
right inverse to $(D\mp \mu)$. Similarly, one shows that 
$Q_2:=P\circ (D\pm \mu)$ is a left inverse of  $(D\mp \mu)$. A priori $Q_2$
is only defined on $H_{1}^s$, but using $Q_1=Q_1\circ (D\mp \mu)\circ Q_2=Q_2$
it is clear 
that $Q_2$ and $Q_1$ coincide on $H_1^s$. So the integral kernels of 
$Q_1$ and $Q_2$ have to coincide, so $Q_1$ is a left and right inverse of
$(D\mp \mu)$ and thus $\pm\mu$ is not in the spectrum of $D$.
\end{proof}

\begin{remark}
 In the case $1<s<\infty$ and $M$ of bounded geometry, one can also prove that $\Spec_{L^s}^M(D)=\mC$ implies $\Spec_{L^s}^M(D^2)=\mC$: As in the proof of the ``if'' statement from above one has to show that $D\psi\in L^s$. This can be proven using regularity theory on manifolds of bounded geometry.
\end{remark}

\begin{lem}[Pointwise symmetries]\label{spec_ps} Let $1\leq s\leq \infty$. Let
$(M,g)$ be an $m$-dimensional Riemannian spin manifold.
\begin{itemize}
\item[(i)] $m\equiv 0\mod 2$: The number $\mu$ is in the $L^s$-spectrum of $D$ 
if and only if  $-\mu$ is in the $L^s$-spectrum of $D$ if and only if  
$\bar{\mu}$ is in the $L^s$-spectrum of $D$. 
\item[(ii)] $m\equiv 1\mod 4$: The number $\mu$ is in the $L^s$-spectrum of $D$ 
if and only if  
$-\bar{\mu}$ is in the $L^s$-spectrum of $D$.
\item[(iii)] $m\equiv 3\mod 4$: The number $\mu$ is in the $L^s$-spectrum of
$D$ 
if and only if  
$\bar{\mu}$ is in the $L^s$-spectrum of $D$.
\end{itemize}
\end{lem}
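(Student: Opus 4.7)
The plan is to construct, for each residue class of $m$, a fiberwise isometry $T\colon \Si_M \to \Si_M$ of the spinor bundle---either $\mC$-linear or $\mC$-antilinear---that is parallel for the spin connection and intertwines $D$ with $\pm D$ in the appropriate sense. Since $T$ is a pointwise isometry, it extends to an isometric bijection of $L^s(\Si_M)$ for every $1\le s\le \infty$ and preserves the graph-norm domain $H_1^s$; hence conjugation by $T$ will send $D-\mu$ to an operator of the form $\pm(D-\mu')$ with $\mu'\in\{-\mu,\bar\mu,-\bar\mu\}$. Consequently $D-\mu$ has a bounded inverse from $L^s$ to $L^s$ if and only if $D-\mu'$ does.

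For (i), where $m$ is even, I would take $T$ to be the complex volume element $\omega_M$, which is a parallel, pointwise unitary, $\mC$-linear involution satisfying $\omega_M\cdot X=-X\cdot\omega_M$ for all vectors $X$ (because $m$ is even); consequently $\omega_M D=-D\omega_M$. This immediately yields $\mu\in\Spec_{L^s}^M(D) \iff -\mu\in\Spec_{L^s}^M(D)$.

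For the conjugation-type symmetries, the plan is to invoke a charge conjugation: a parallel, fiberwise $\mC$-antilinear isometry $C\colon \Si_M\to \Si_M$ arising from the real or quaternionic structure on the complex spin representation $\Si_m$ (see e.g.\ \cite{friedrich:00} or \cite[Ch.~I \S 5]{lawson.michelsohn:89}). Its commutation with Clifford multiplication is of the form $C(X\cdot \psi)=(-1)^{m(m+1)/2}X\cdot C\psi$, and hence $CD=(-1)^{m(m+1)/2}DC$. Setting $\ep\definedas(-1)^{m(m+1)/2}\in\{\pm 1\}$ and using antilinearity ($C\mu=\bar\mu C$), one computes
\[ C(D-\mu)C^{-1}=\ep D-\bar\mu=\ep\bigl(D-\ep\bar\mu\bigr),\]
so $D-\mu$ is boundedly invertible on $L^s$ iff $D-\ep\bar\mu$ is. For $m\equiv 3\pmod 4$ one has $\ep=+1$, yielding~(iii); for $m\equiv 1\pmod 4$ one has $\ep=-1$, yielding~(ii); for $m$ even, combining the resulting $\bar\mu$- or $-\bar\mu$-symmetry with the $\omega_M$-symmetry yields both $\mu\leftrightarrow -\mu$ and $\mu\leftrightarrow \bar\mu$, completing~(i).

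The main obstacle is the representation-theoretic input: constructing $C$ with the correct commutation behavior modulo~$8$, which is a standard but somewhat finicky piece of Clifford-algebra bookkeeping. Once $C$ is available, the spectral-theoretic step is routine, since pointwise $L^s$-isometries preserve the $H_1^s$-domain and transport bounded inverses of $D-\mu$ to bounded inverses of the conjugated operator; no global or geometric hypothesis on $(M,g)$ beyond the standing completeness assumption enters.
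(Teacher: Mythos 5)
Your argument follows essentially the same route as the paper's: a parallel, fiberwise antilinear ``charge conjugation'' $C$ (paper: $\tilde\alpha$, coming from the real/quaternionic structure on $\Si_m$ in \cite[Prop.\ p.~31]{friedrich:00}) intertwining $D$ with $\pm D$, combined for even $m$ with the anticommutation of $D$ and $\omega_M$, then transport of bounded inverses through the pointwise isometry. The spectral-theoretic step (a fiberwise isometry that is parallel, hence commutes with $\nabla$, preserves each $L^s$-norm and the graph norm, so conjugation by it carries a bounded inverse of $D-\mu$ to one of $\pm(D-\ep\bar\mu)$) is exactly the content the paper leaves implicit.

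One point to flag: your sign $\ep=(-1)^{m(m+1)/2}$ for $C(X\cdot\psi)=\ep\,X\cdot C\psi$ disagrees with Friedrich's table (which the paper quotes) in the \emph{even} cases. Friedrich gives: anticommute ($\ep=-1$) for $m\equiv 0,1\pmod 4$ and commute ($\ep=+1$) for $m\equiv 2,3\pmod 4$; your formula gives $+,-,-,+$ for $m\equiv 0,1,2,3\pmod 4$ respectively, i.e.\ the opposite sign when $m$ is even. The two conventions agree for odd $m$, which is the only case in which the sign of $\ep$ actually matters for the conclusion: for even $m$ you also invoke the $\omega_M$-symmetry, and whichever sign $\ep$ has, the pair $\{\mu\mapsto -\mu,\ \mu\mapsto\ep\bar\mu\}$ generates all of $\{-\mu,\bar\mu,-\bar\mu\}$. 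So your statement of the lemma comes out right, but if you want the intermediate Clifford-commutation formula to be citable, you should either align it with the reference you invoke or note that an overall choice of $C$ (e.g.\ pre- or post-composing with $\omega_M$ in even dimensions) flips the sign without affecting the final symmetry group.
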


\begin{proof} By \cite[Prop. p. 31]{friedrich:00} we have a map $\alpha\colon 
\Sigma_m\to\Sigma_m$ that is 
\begin{itemize}
\item a $\Spin(m)$-equivariant real structure that  anticommutes with Clifford
multiplication if $m\equiv 0,1 \mod 8$.
\item a $\Spin(m)$-equivariant  quaternionic structure that  commutes with
Clifford multiplication if $m\equiv 2,3 \mod 8$.
\item a $\Spin(m)$-equivariant  quaternionic structure that  anticommutes with
Clifford multiplication if $m\equiv 4,5 \mod 8$.
\item a $\Spin(m)$-equivariant real structure that  commutes with Clifford
multiplication if $m\equiv 6,7 \mod 8$.
\end{itemize}
Note that by definition real structure means that $\alpha^2=\Id$ and
$\alpha({\rm i} v)=-{\rm i}\alpha(v)$. Moreover, quaternionic structure means
that $\alpha^2=-\Id$ and $\alpha({\rm i} v)=-{\rm i}\alpha(v)$.

 Due to the $\Spin(m)$-equivariance $\alpha$ induces a fiber preserving map $\tilde{\alpha}$ on
the spinor bundle with the same properties as above.
 Thus, \[(D-\mu)\circ \tilde{\alpha} (\phi)
             =\left \{\begin{matrix} 
 \tilde{\alpha}\circ (-D-\bar{\mu})(\phi) & m\equiv 0,1 \mod 4\\
 \tilde{\alpha}\circ (D-\bar{\mu}) (\phi) & m\equiv 2,3 \mod 4. 
                    \end{matrix}
 \right.\]
Thus, if $\mu$ is in the $L^s$-spectrum of $D$ then $-\bar{\mu}$ (resp.
$\bar{\mu}$) in the $L^s$-spectrum of $D$ for $m\equiv 0,1$ (resp. $2$,$3$)
$\mod$ 4.  This gives (ii) and (iii).

If $m$ is even, then $D (\omega_M\cdot \phi)=-\omega_M\cdot D\phi$. Thus,
the spectrum is symmetric when reflected on the imaginary axis. 
Together with the symmetries from above, (i) follows.
\end{proof}

\begin{lem}[Orientation reversing isometry]\label{spec_or} Let $1\leq s\leq \infty$.
Assume there is an orientation reversing isometry $f\colon M^m\to M^m$ that ``lifts''
to
the spin structure as described in the proof.
 Then  $\mu$ is in the $L^s$-spectrum of $D$ 
if and only if  $-\mu$ is in the $L^s$-spectrum of $D$.
\end{lem}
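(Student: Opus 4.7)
The plan is to construct a bundle isomorphism $\tilde f\colon  \Si_M \to \Si_M$ covering $f$ -- this is the lift asserted in the hypothesis -- and to establish the anticommutation relation $D\circ \tilde f = -\tilde f\circ D$ on $H_1^s$. Granted this identity, the symmetry of the $L^s$-spectrum will follow by the same mechanism as in Lemma~\ref{spec_ps}: since $f$ is volume-preserving and $\tilde f$ is fiberwise a Hermitian isometry, pullback by $\tilde f$ is a bounded isomorphism of $L^s(\Si_M)$ for every $s\in[1,\infty]$, restricting to an isomorphism of $H_1^s$. If $\mu\notin \Spec_{L^s}^M(D)$, the conjugation identity $\tilde f^{-1}(D-\mu)\tilde f = -(D+\mu)$ shows that $D+\mu$ has the bounded inverse $-\tilde f^{-1}(D-\mu)^{-1}\tilde f$, hence $-\mu\notin \Spec_{L^s}^M(D)$; the reverse implication follows by symmetry (replacing $f$ with $f^{-1}$).

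For the construction of $\tilde f$ I would argue as follows. The differential $df_x$ carries positively oriented orthonormal frames at $x$ to negatively oriented ones at $f(x)$, so composition with a fixed reflection in $\mathrm{O}(m)$ gives a map into $\mathrm{SO}(m)$ which by hypothesis lifts compatibly to $\Spin(m)$. The resulting bundle map is fiberwise a unitary isomorphism. Because the orientation-reversing factor must be resolved through the double cover $\mathrm{Pin}(m)\to \mathrm{O}(m)$, Clifford multiplication transforms with an extra minus sign, namely
\[
\tilde f_x\bigl(df_x(Y)\cdot_{f(x)} \psi\bigr) = -Y\cdot_x \tilde f_x(\psi) \qquad (Y\in T_xM,\ \psi\in \Si_M|_{f(x)}).
\]
This sign is the crucial analytical point of the proof.

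The anticommutation then follows from a direct computation in a local positively oriented orthonormal frame $(e_i)$ near $x$. Naturality of the Levi-Civita connection under the isometry $f$, together with the fact that $\tilde f$ preserves the spinorial connection, gives $\na_{e_i}(\tilde f \phi)|_x = \tilde f_x(\na_{df_x(e_i)}\phi|_{f(x)})$. Combining this with the Clifford transformation rule above and using that $(df_x(e_i))_i$ is an orthonormal frame at $f(x)$, one obtains
\begin{align*}
D(\tilde f \phi)(x) &= \sum_i e_i\cdot_x \tilde f_x\bigl(\na_{df_x(e_i)}\phi|_{f(x)}\bigr) = -\sum_i \tilde f_x\bigl(df_x(e_i)\cdot_{f(x)} \na_{df_x(e_i)}\phi|_{f(x)}\bigr)\\
&= -\tilde f_x\bigl((D\phi)(f(x))\bigr) = -(\tilde f D\phi)(x).
\end{align*}
The main obstacle will be the careful bookkeeping of the minus sign introduced by the Pin lift of the orientation-reversing component of $f$; once this sign is correctly identified, the rest of the argument reduces to naturality of the Dirac operator under isometries and the formal spectral argument above.
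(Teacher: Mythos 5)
Your proposal is correct and takes essentially the same route as the paper: lift $f$ to the spin structure by inserting an $\mathrm{O}(m)$-reflection in the frame-bundle map, extract the sign in the Clifford transformation rule, derive the anticommutation $D\circ\tilde f = -\tilde f\circ D$, and conclude by conjugation. You spell out the Clifford-sign and anticommutation computations and the $L^s$-boundedness of conjugation by $\tilde f$, which the paper delegates to \cite[Lemmas~A.1 and A.4]{ammann.dahl.humbert:11} or leaves implicit; the substance is the same.
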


\begin{proof} The proof follows the lines of
\cite[Appendix~A]{ammann.dahl.humbert:11}. In this reference, $f$  is required
to be a reflection at a hyperplane of $M$. But this doesn't change the part we
need:
We lift $f$ to the bundle $P_{\SO(m)}M$ of oriented orthonormal frames by mapping the frame $\mathcal{E}=(e_1,\ldots,
e_m)$ to $f_*\mathcal{E}=(-\d f(e_1),\d f(e_2),\ldots, \d f(e_m))$, so $f_*\colon 
P_{\SO(m)}M\to P_{\SO(m)}M$.
Since $f$ is an orientation reserving isometry, 
\begin{equation*} f_*(\mathcal{E}A)=f_*(\mathcal{E})JAJ \ \text{for\ all\ } A\in \SO(m)
\end{equation*}
where $J={\rm diag}(-1, 1,1,\ldots, 1)$. The map~$f$ is assumed to lift 
to the spin structure, i.e.,  there is a lift $\tilde{f}_* \colon  P_{\Spin(m)}(M)\to
P_{\Spin(m)}(M)$ with $\theta\circ \tilde{f}_*=f_*\circ \theta$ where $\theta$
denotes the double covering $\theta\colon  P_{\Spin(m)}(M)\to P_{\SO(m)}(M)$. By 
\cite[Lemma~A.1 and Lemma~A.4]{ammann.dahl.humbert:11}, 
$f$ then lifts to a map $f_\sharp\colon  \Si_M\to \Si_M$ on the spinor bundle
which fulfils $f_\sharp (D\phi)=-D (f_\sharp \phi)$.
\end{proof}

\begin{example}\label{spec_or_ex}\hfill\\
{\bf (i)\ } Let $M^{n+1}$ be a Riemannian spin  manifold with a spin structure $\theta$ as above. 
Assume that up to isomorphism this is the unique spin structure on $M$. Let $f\colon  M\to M$ be an orientation reversing
isometry. By pulling back the double covering $P_{\Spin}M\to P_{\SO}M$ by $f_*$ we obtain the double covering $f^*\theta: f^*P_{\Spin}M\to P_{\SO}M$. We then turn $f^*P_{\Spin}M$ into a $\Spin (n+1)$-principal bundle by conjugating the action of $\Spin (n+1)$ on $P_{\Spin}M$ with Clifford multiplication with $e_0$. Then $f^*\theta$ is a spin structure
 on $M$. Thus an isomorphism from $\theta$ to $f^*\theta$ yields a map $f_\sharp$ as above.\\
{\bf (ii)} Consider the map $f=f_1\times \id\colon  \Mc^m=\mH_c^{k+1}\times N^n\to
\Mc^{m,k}$ where $f_1$ is an orientation reversing isometry as in {\rm (i)}. Then, $f$ is again an
orientation reversing isometry. Using $P_{\rm SO}(\mH_c\times N)= (P_{\rm
SO}(\mH_c^{k+1})\times P_{\rm SO}(N))\times_{\bar{\xi}} {\rm SO}(m)$ where
$\bar{\xi}\colon  {\rm SO}(k+1)\times {\rm SO}(n)\to {\rm SO}(m)$ is the standard
embedding and using the analogous describtion for $P_{\Spin}(\mH_c\times N)$, see
Section~\ref{spin_prod}, one see that also $f$ lifts to the spin structure.
\end{example}

\section{Dirac eigenvalues of generic metrics}

\begin{prop}\label{app_C}
  Let $(M,g)$ be a closed, connected Riemannian spin manifold, let $\mu\in \mR$. Let  $U\subset M$ be a nonempty 
 open subset. In case that $\mu=0$, assume additionally that the $\alpha$-genus of $M$ is zero. Then, there is a metric $\tilde{g}$ on $M$ with $\tilde{g}=g$ on $M\setminus U$ and $\ker\, (D^{\tilde{g}}-\mu)=\{0\}$.
\end{prop}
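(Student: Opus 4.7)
The plan is to reduce to the case $\mu \in \Spec_{L^2}^M(D^g)$ (otherwise one takes $\tilde g = g$) and to produce the desired metric by a real-analytic perturbation of $g$ localized in an open set $V$ with $\overline V \subset U$. Let $E_\mu := \ker(D^g - \mu)$, which is finite-dimensional since $M$ is closed. Pick a smooth symmetric $2$-tensor $h$ with compact support in $V$ and set $g_t := g + t h$ for $|t|$ small enough that $g_t$ is still Riemannian. Using the Bourguignon--Gauduchon identification of spinor bundles for varying Riemannian metrics, $D^{g_t}$ becomes a real-analytic one-parameter family of closed densely defined self-adjoint operators on a fixed Hilbert space. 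Since $D^g$ has discrete spectrum, Kato--Rellich perturbation theory produces analytic eigenvalue branches $\lambda_j(t)$; the $d := \dim E_\mu$ branches satisfying $\lambda_j(0) = \mu$ have derivatives at $0$ equal to the eigenvalues of the self-adjoint matrix $A(h) := P_{E_\mu}\,\dot D(h)\,P_{E_\mu}$ acting on $E_\mu$, where $P_{E_\mu}$ is the $L^2$-orthogonal projection and $\dot D(h)$ is the $h$-derivative of the trivialized Dirac operator (a first-order differential operator with coefficients supported in $V$).

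The key step is to choose $h$ so that $A(h)$ is invertible: then all $d$ branches through $\mu$ leave $\mu$ at $t=0$, and by analyticity they meet $\mu$ only on a discrete subset of parameters, so $\mu \notin \Spec_{L^2}^M(D^{g_t})$ for some arbitrarily small $t > 0$, and we set $\tilde g := g_t$. To produce such $h$, consider the linear map $h \mapsto A(h)$ from $C_c^\infty(\mathrm{Sym}^2\,T^*V)$ to the space of self-adjoint endomorphisms of $E_\mu$; I claim its image is the full target, hence contains invertible elements. This reduces to showing that no nonzero self-adjoint endomorphism $B$ of $E_\mu$ is orthogonal (with respect to the trace pairing) to every $A(h)$. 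The crucial ingredient is Aronszajn's unique continuation theorem applied to the strongly elliptic operator $(D^g - \mu)^*(D^g-\mu)$: no nonzero element of $E_\mu$ vanishes identically on the open set $V$, so the restriction map $E_\mu \hookrightarrow L^2(V, \Sigma_M|_V)$ is injective, and a standard pointwise density argument exhibits an $h$ realizing any prescribed $B$.

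For $\mu \neq 0$ the preceding argument concludes the proof. For $\mu = 0$, the minimum over all metrics on $M$ of $\dim \ker D^g$ is bounded below by the $KO$-theoretic $\alpha$-genus $\alpha(M) \in KO_m(\mathrm{pt})$; the hypothesis $\alpha(M) = 0$ is precisely what guarantees, by the classical existence results of Hitchin, Maier, and B\"ar--Dahl, that a metric $g^*$ on $M$ with $\ker D^{g^*} = \{0\}$ exists at all. To realize such invertibility by a modification supported in $U$, one inserts a model piece of an invertible metric on a small ball $B \subset U$ via the localized surgery/gluing construction of Ammann--Dahl--Humbert (keeping $g$ untouched outside $U$), obtaining an intermediate metric $g'$ on $M$ with $g' = g$ off $U$ and with $\dim \ker D^{g'}$ as small as the topological data allow; applying the perturbation argument of the preceding paragraph to $g'$ with $\mu = 0$ then reduces $\ker D^{\tilde g}$ to $\{0\}$.

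The principal obstacle is establishing that $A(h)$ can be made invertible, which combines Kato--Rellich perturbation theory with the unique continuation property of Dirac eigenspinors and the flexibility of symmetric $2$-tensors supported in $V$. For $\mu = 0$ an additional and essential topological input appears via the $\alpha$-genus: without its vanishing the Atiyah--Singer index forces $\ker D^g \neq 0$ for every metric, so no analytic perturbation whatsoever can eliminate the zero eigenvalue.
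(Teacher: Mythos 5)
The centerpiece of your argument for $\mu\neq 0$ is the claim that the linear map $h\mapsto A(h)=P_{E_\mu}\dot D(h)P_{E_\mu}$ from symmetric $2$-tensors supported in $V$ onto self-adjoint endomorphisms of $E_\mu$ is surjective, so that $A(h)$ can be made invertible. You call this "a standard pointwise density argument," but no such standard fact is available, and the claim as stated cannot be correct: exactly the same argument would apply verbatim to $\mu=0$, yet for $\mu=0$ on a manifold with $\alpha(M)\neq 0$ no local perturbation can eliminate harmonic spinors, so $A(h)$ is certainly \emph{not} surjective there. The unique continuation theorem only gives injectivity of the restriction map $E_\mu\hookrightarrow L^2(V)$; it does not give the linear independence (or richness) of the polarized energy--momentum tensors $Q_{\psi^{(i)},\psi^{(j)}}|_V$ that surjectivity would require. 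In particular, the traceless parts of these tensors carry all the off-diagonal information of $A(h)$, and their nondegeneracy is not obvious. The crucial question your proposal does not answer is: where does the hypothesis $\mu\neq 0$ actually enter?

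The paper's proof answers this question and is substantially more economical. It restricts to \emph{conformal} perturbations $g_t=(1+t\alpha)fg$ with $\alpha$ supported in $U$, picks $f$ so that $d=\dim\ker(D^{fg}-\mu)$ is \emph{minimal}, and observes that minimality forces all eigenvalue branches through $\mu$ to be constant, so all first derivatives vanish. Summing the Bourguignon--Gauduchon derivative formula over a diagonalizing basis yields $\int_U\alpha\,\mu\sum_i|\psi^{(i)}|^2\,\vo_{g_0}=0$ for all $\alpha$ supported in $U$ (only the \emph{trace} of $A$ is needed, and the sum $\sum_i|\psi^{(i)}|^2$ is basis-independent). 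Since the trace of the energy--momentum tensor satisfies the pointwise identity $\langle g_0,Q_\psi\rangle=\mu|\psi|^2$ whenever $D\psi=\mu\psi$, the factor of $\mu$ appears explicitly; with $\mu\neq 0$ and unique continuation one concludes $\psi^{(i)}\equiv 0$, contradicting $d>0$. This is precisely the place where $\mu\neq 0$ is indispensable, and it sidesteps the surjectivity issue entirely. If you want to rescue your version, you would need to supply a genuine proof of surjectivity of $h\mapsto A(h)$ for $\mu\neq 0$, and such a proof would necessarily have to exploit $\mu\neq 0$ in a way your sketch does not.

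A smaller point: for $\mu=0$ and $\alpha(M)=0$, \cite[Theorem~1.1]{ammann.dahl.humbert:11} already produces a metric $\tilde g$ agreeing with $g$ outside $U$ and with $\ker D^{\tilde g}=\{0\}$; the extra perturbation step at the end of your $\mu=0$ paragraph is unnecessary (and would require the same unjustified surjectivity).
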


\begin{proof}
 For $\mu=0$, the proposition follows from \cite[Theorem~1.1]{ammann.dahl.humbert:11}. For $\mu\neq 0$, the proof is a direct consequence of the following lemma.
\end{proof}

\begin{lem}
 Let $(M,g)$ be a closed, connected Riemannian spin manifold, let $\mu\in \mR\setminus \{0\}$, and let  $U\subset M$ be a nonempty 
 open subset. Then there is a function $f\in C^\infty(M, 
\mR^+)$ with $f|_{M\setminus U}\equiv 1$ such that $\ker\, (D^{fg}-\mu)=\{0\}$. 
\end{lem}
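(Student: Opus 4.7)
My plan is to perturb $g$ conformally inside $U$ and then invoke Kato's analytic perturbation theory together with weak unique continuation for the Dirac operator. If $\mu\notin\Spec_{L^2}^M(D^g)$ take $f\equiv 1$. Otherwise set $V:=\ker(D^g-\mu)$, finite-dimensional since $M$ is closed, pick $\chi\in C_c^\infty(U,\mR_{\geq 0})$ with $\chi\not\equiv 0$, and consider the real-analytic family of metrics $g_t:=(1+t\chi)g$ for $|t|$ small enough that $f_t:=1+t\chi>0$.

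Using the standard conformal covariance of the Dirac operator, the eigenvalue problem $D^{g_t}\widetilde\psi=\lambda\widetilde\psi$ is equivalent, under the pointwise bundle isometry $\widetilde\psi\leftrightarrow\psi$, to the weighted problem $D^g\phi=\lambda f_t^{1/2}\phi$ on the fixed bundle $\Si_M$ (with $\phi=f_t^{(n-1)/4}\psi$, $n=\dim M$). The unitary symmetrisation $\eta:=f_t^{1/4}\phi$ converts this into $A_t\eta=\lambda\eta$, where $A_t:=f_t^{-1/4}D^g f_t^{-1/4}$ is self-adjoint on the fixed Hilbert space $L^2(\Si_M,\vo_g)$ with common domain $H_1^2(\Si_M)$ and $\Spec(A_t)=\Spec_{L^2}(D^{g_t})$. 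Because $f_t^{\pm 1/4}$ is smooth, real-analytic in $t$, and bounded above and below, the family $\{A_t\}$ is a self-adjoint holomorphic family of type~(A) with compact resolvent; hence by Kato's theory the eigenvalues through $\mu$ at $t=0$ are given by $\dim V$ real-analytic branches $\lambda_j(t)$, whose slopes $\lambda_j'(0)$ are the eigenvalues of $P_V A_0' P_V$ on $V$.

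A direct computation gives $A_0'=-\tfrac14(\chi D^g+D^g\chi)$, and so for $\phi,\phi'\in V$, using $D^g\phi=\mu\phi$ and the formal self-adjointness of $D^g$,
\[
  (\phi',A_0'\phi)=-\tfrac14\bigl[\mu(\phi',\chi\phi)+(D^g\phi',\chi\phi)\bigr]=-\tfrac\mu2\int_M\chi\,\<\phi',\phi\>\,\vo_g.
\]
Thus $P_V A_0' P_V$ is $-\mu/2$ times the Gram matrix of any basis of $V$ with respect to the semidefinite inner product $\<\phi',\phi\>_\chi:=\int_M\chi\<\phi',\phi\>\vo_g$. This Gram matrix is in fact positive definite: were there a nonzero $\phi\in V$ with $\int_M\chi|\phi|^2=0$, then $\chi\geq 0$ would force $\phi\equiv 0$ on the nonempty open set $\{\chi>0\}\subset U$, and the weak unique continuation property for the elliptic operator $D^g-\mu$ (Aronszajn/B\"ar) would yield $\phi\equiv 0$ on $M$, a contradiction. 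Since $\mu\neq 0$, every $\lambda_j'(0)\neq 0$, so each $\lambda_j$ equals $\mu$ only at $t=0$ in a neighbourhood; hence $\mu\notin\Spec_{L^2}(D^{g_t})$ for all sufficiently small $t\neq 0$, and $f:=1+t\chi$ has all required properties.

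The main conceptual obstacle is the positivity of that Gram matrix, which is where weak unique continuation enters and where the hypothesis $\mu\neq 0$ is used essentially: for $\mu=0$ one has $P_V A_0' P_V\equiv 0$, so conformal first-order perturbations cannot split the kernel---consistent with the fact that the $\mu=0$ case of Proposition~\ref{app_C} requires the vanishing of the $\alpha$-genus and invokes the deeper result \cite{ammann.dahl.humbert:11}. The remaining technicalities (checking that $\{A_t\}$ is a type~(A) family on the common domain $H_1^2(\Si_M)$ and that Kato's degenerate perturbation theorem applies) are routine once the symmetrisation is in place.
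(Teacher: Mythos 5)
Your proof is correct and rests on the same two pillars as the paper's: the first-order perturbation formula for Dirac eigenvalues under a conformal change of metric, and the weak unique continuation property for $D-\mu$. The differences are in packaging and logical structure. The paper cites the derivative formula from Bourguignon--Gauduchon \cite{BG_92} directly in the form $\frac{\d}{\d t}\big|_{t=0}\mu_i(t)=-\tfrac12\int_M\<\alpha g_0,Q_{\psi^{(i)}}\>\vo_{g_0}=-\tfrac{\mu}{2}\int_M\alpha\,|\psi^{(i)}|^2\vo_{g_0}$, whereas you re-derive it from scratch via the unitary symmetrisation $A_t=f_t^{-1/4}D^g f_t^{-1/4}$ on the fixed Hilbert space $L^2(\Si_M,\vo_g)$; your $A_0'=-\tfrac14(\chi D^g+D^g\chi)$ computation and the resulting $(\phi',A_0'\phi)=-\tfrac{\mu}{2}\int_M\chi\<\phi',\phi\>\vo_g$ reproduce exactly that formula. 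The logical structure then diverges: the paper selects $f$ so as to \emph{minimize} $d=\dim\ker(D^{fg}-\mu)$, observes that minimality forces all slopes $\mu_i'(0)$ to vanish for every perturbation direction $\alpha$ supported in $U$, concludes $\sum_i|\psi^{(i)}|^2\equiv 0$ on $U$, and only then invokes unique continuation to get a contradiction. You instead work directly from $g$, fix one nonnegative bump $\chi\not\equiv 0$ supported in $U$, use unique continuation up front to show the form $\int_M\chi\<\cdot,\cdot\>\vo_g$ is positive definite on $V=\ker(D^g-\mu)$, and deduce that \emph{all} slopes are nonzero and of one sign, so a single small $t\neq 0$ removes the whole kernel at once. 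This bypasses the extremal selection of $f$ and is, to my taste, slightly cleaner; it is not, however, a substantively different method -- the same two facts do the work in both versions, and both use $\mu\neq 0$ at exactly the same spot (the slope is a nonzero multiple of $\mu$). Your remarks about type-(A) holomorphic dependence, compact resolvent, and Rellich--Kato branch regularity are the standard bookkeeping hidden in the citation to \cite{BG_92}, and are correct, as is your observation that for $\mu=0$ the projected derivative $P_V A_0' P_V$ vanishes identically, explaining why Proposition~\ref{app_C} needs the $\alpha$-genus hypothesis and \cite{ammann.dahl.humbert:11} in that case.
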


\begin{proof}
 Choose $f\in C^\infty(M, \mR^+)$ with $f|_{M\setminus U}\equiv 1$ such that $d=\dim (E_{f,\mu}:=\ker\, (D^{fg}-\mu))$ is minimal. Assume $d>0$, and set $g_0=fg$. 
 For $\alpha\in C^\infty(M)$ with $\supp\, \alpha\subset U$ and $t$ close to $0$ we define $g_t:=(1+t\alpha)fg$. Then by \cite{BG_92} there are real analytic functions $\mu_1, \ldots, \mu_d\colon  (-\ep, \ep)\to \mR$ with $\mu_i(0)=\mu$ such that $\spec_{L^2}^M(D^{g_t})\cap (\mu-\delta, \mu+\delta)=\{\mu_1(t), \ldots, \mu_d(t)\}$ including multiplicities. It is shown in \cite{BG_92} that there is an orthonormal basis $(\psi^{(1)}, \ldots , \psi^{(d)})$ of $E_{f,\mu}$ depending on the choice of $\alpha$ such that 
 \[ \frac{\d}{\d t}|_{t=0} \mu_i(t)=-\frac{1}{2} \int_M \< \alpha g_0, Q_{\psi^{(i)}}\> \vo_{g_0}\]
 where $Q_{\psi}(X,Y)=\frac{1}{2}\Re \< X\cdot \nabla_Y \psi + Y\cdot \nabla_X \psi, \psi\>.$ Thus, 
 \[\< g_0, Q_{\psi^{(i)}}\>= \sum_r \< e_r\cdot \nabla_{e_r} \psi^{(i)}, \psi^{(i)}\>= \mu |\psi^{(i)}|^2.\]
 As $d$ is minimal, we see that $ \frac{\d}{\d t}|_{t=0} \mu_i(t)=0$, and thus for all $\alpha$ as above 
 \[ -\frac{1}{2}\int_M \alpha\mu \sum_{i=1}^d |\psi^{(i)}|^2 \vo_{g_0}=0.\]
 Note that  $\phi:=\sum_{i=1}^d |\psi^{(i)}|^2\in C^\infty(M)$ does not depend on the choice of $\alpha$. This can be seen by direct calculation with base change matrices or alternatively by observing that 
 $\phi$ is the pointwise trace of the integral kernel of the projection to $E_{f,\mu}$.
 With $\mu\neq 0$  this implies that $\phi$ and thus all $\psi^{(i)}$ vanish on $U$. The unique continuation principle implies then $\psi^{(i)}\equiv 0$ which gives a contradiction.
\end{proof}

\end{appendix}

\bibliographystyle{acm}
\bibliography{ammann-grosse}

\begin{thebibliography}{10}

\bibitem{abels:12}
{\sc Abels, H.}
\newblock {\em Pseudodifferential and singular integral operators}.
\newblock De Gruyter Graduate Lectures. De Gruyter, Berlin, 2012.
\newblock An introduction with applications.

\bibitem{Ammha}
{\sc Ammann, B.}
\newblock A variational problem in conformal spin geometry.
\newblock {\em Habilitation\/} (Hamburg, 2003).

\bibitem{ammann.dahl.humbert:11}
{\sc Ammann, B., Dahl, M., and Humbert, E.}
\newblock Harmonic spinors and local deformations of the metric.
\newblock {\em Math. Res. Lett. 18}, 5 (2011), 927--936.

\bibitem{ammann.grosse:p13b}
{\sc Ammann, B., and Grosse, N.}
\newblock Relations between threshold constants for {Y}amabe type bordism
  invariants.
\newblock Preprint in preparation.

\bibitem{AHM}
{\sc Ammann, B., Humbert, E., and Morel, B.}
\newblock Mass endomorphism and spinorial {Y}amabe type problems on conformally
  flat manifolds.
\newblock {\em Comm. Anal. Geom. 14}, 1 (2006), 163--182.

\bibitem{baer_diss}
{\sc B{\"a}r, C.}
\newblock {\em Das {S}pektrum von {D}irac-{O}peratoren}.
\newblock Bonner Mathematische Schriften [Bonn Mathematical Publications], 217.
  Universit\"at Bonn, Mathematisches Institut, Bonn, 1991.
\newblock Dissertation, Rheinische Friedrich-Wilhelms-Universit{\"a}t Bonn,
  Bonn, 1990.

\bibitem{baer_98}
{\sc B{\"a}r, C.}
\newblock Extrinsic bounds for eigenvalues of the {D}irac operator.
\newblock {\em Ann. Global Anal. Geom. 16}, 6 (1998), 573--596.

\bibitem{BGM}
{\sc B{\"a}r, C., Gauduchon, P., and Moroianu, A.}
\newblock Generalized cylinders in semi-{R}iemannian and {S}pin geometry.
\newblock {\em Math. Z. 249}, 3 (2005), 545--580.

\bibitem{BG_92}
{\sc Bourguignon, J.-P., and Gauduchon, P.}
\newblock Spineurs, op\'erateurs de {D}irac et variations de m\'etriques.
\newblock {\em Comm. Math. Phys. 144}, 3 (1992), 581--599.

\bibitem{bunke_91}
{\sc Bunke, U.}
\newblock The spectrum of the {D}irac operator on the hyperbolic space.
\newblock {\em Math. Nachr. 153\/} (1991), 179--190.

\bibitem{CGLS}
{\sc Cahen, M., Gutt, S., Lemaire, L., and Spindel, P.}
\newblock Killing spinors.
\newblock {\em Bull. Soc. Math. Belg. S\'er. A 38\/} (1986), 75--102 (1987).

\bibitem{Char05}
{\sc Charalambous, N.}
\newblock On the {$L^p$} independence of the spectrum of the {H}odge
  {L}aplacian on non-compact manifolds.
\newblock {\em J. Funct. Anal. 224}, 1 (2005), 22--48.

\bibitem{coddington.levinson:55}
{\sc Coddington, E.~A., and Levinson, N.}
\newblock {\em Theory of ordinary differential equations}.
\newblock McGraw-Hill Book Company, Inc., New York-Toronto-London, 1955.

\bibitem{davies_90}
{\sc Davies, E.~B.}
\newblock {\em Heat kernels and spectral theory}, vol.~92 of {\em Cambridge
  Tracts in Mathematics}.
\newblock Cambridge University Press, Cambridge, 1990.

\bibitem{davies97}
{\sc Davies, E.~B.}
\newblock {$L^p$} spectral theory of higher-order elliptic differential
  operators.
\newblock {\em Bull. London Math. Soc. 29}, 5 (1997), 513--546.

\bibitem{davies_simon_taylor_88}
{\sc Davies, E.~B., Simon, B., and Taylor, M.}
\newblock {$L^p$} spectral theory of {K}leinian groups.
\newblock {\em J. Funct. Anal. 78}, 1 (1988), 116--136.

\bibitem{friedrich:00}
{\sc Friedrich, T.}
\newblock {\em {D}irac {O}perators in {R}iemannian {G}eometry}.
\newblock {G}raduate {S}tudies in {M}athematics 25. AMS, Providence, Rhode
  Island, 2000.

\bibitem{GT}
{\sc Gilbarg, D., and Trudinger, N.~S.}
\newblock {\em Elliptic partial differential equations of second order}.
\newblock Classics in Mathematics. Springer-Verlag, Berlin, 2001.
\newblock Reprint of the 1998 edition.

\bibitem{hempel_voigt_86}
{\sc Hempel, R., and Voigt, J.}
\newblock The spectrum of a {S}chr\"odinger operator in {$L_p({\bf R}^\nu)$} is
  {$p$}-independent.
\newblock {\em Comm. Math. Phys. 104}, 2 (1986), 243--250.

\bibitem{hempel_voigt_87}
{\sc Hempel, R., and Voigt, J.}
\newblock On the {$L_p$}-spectrum of {S}chr\"odinger operators.
\newblock {\em J. Math. Anal. Appl. 121}, 1 (1987), 138--159.

\bibitem{hirzebruch_scharlau}
{\sc Hirzebruch, F., and Scharlau, W.}
\newblock {\em Einf\"uhrung in die {F}unktionalanalysis}.
\newblock Bibliographisches Institut, Mannheim, 1971.
\newblock B. I.-Hochschultaschenb{\"u}cher, No. 296*.

\bibitem{ji.weber:p10}
{\sc Ji, L., and Weber, A.}
\newblock ${L}^p$ spectrum and heat dynamics of locally symmetric spaces of
  higher rank.
\newblock Preprint, 2010.
\newblock \url{http://arxiv.org/abs/1005.2988}.

\bibitem{kord_91}
{\sc Kordyukov, Y.~A.}
\newblock {$L^p$}-theory of elliptic differential operators on manifolds of
  bounded geometry.
\newblock {\em Acta Appl. Math. 23}, 3 (1991), 223--260.

\bibitem{lawson.michelsohn:89}
{\sc Lawson, H.~B., and Michelsohn, M.-L.}
\newblock {\em Spin Geometry}.
\newblock Princeton University Press, Princeton, 1989.

\bibitem{RS}
{\sc Reed, M., and Simon, B.}
\newblock {\em Methods of modern mathematical physics {I.} {F}unctional
  analysis}, second~ed.
\newblock Academic Press Inc. [Harcourt Brace Jovanovich Publishers], New York,
  1980.

\bibitem{RS2}
{\sc Reed, M., and Simon, B.}
\newblock {\em Methods of modern mathematical physics {II}. {F}ourier analysis,
  self-adjointness}.
\newblock Academic Press Inc. [Harcourt Brace Jovanovich Publishers], New York,
  1975.

\bibitem{saloff-coste_92}
{\sc Saloff-Coste, L.}
\newblock Uniformly elliptic operators on {R}iemannian manifolds.
\newblock {\em J. Differential Geom. 36}, 2 (1992), 417--450.

\bibitem{sturm_93}
{\sc Sturm, K.-T.}
\newblock On the {$L^p$}-spectrum of uniformly elliptic operators on
  {R}iemannian manifolds.
\newblock {\em J. Funct. Anal. 118}, 2 (1993), 442--453.

\bibitem{taylor:81}
{\sc Taylor, M.~E.}
\newblock {\em Pseudodifferential operators}.
\newblock Princeton University Press, Princeton, N.J., 1981.

\bibitem{wang_97}
{\sc Wang, J.}
\newblock The spectrum of the {L}aplacian on a manifold of nonnegative {R}icci
  curvature.
\newblock {\em Math. Res. Lett. 4}, 4 (1997), 473--479.

\bibitem{weber:diss}
{\sc Weber, A.}
\newblock Heat kernel estimates and $l^p$-spectral theory of locally symmetric
  spaces.
\newblock Dissertation, 2007.
\newblock \url{http://dx.doi.org/10.5445/KSP/1000005774}.

\bibitem{werner_00}
{\sc Werner, D.}
\newblock {\em Funktionalanalysis}, extended~ed.
\newblock Springer-Verlag, Berlin, 2000.

\bibitem{Wolf}
{\sc Wolf, J.~A.}
\newblock Essential self-adjointness for the {D}irac operator and its square.
\newblock {\em Indiana Univ. Math. J. 22\/} (1972/73), 611--640.

\end{thebibliography}

\end{document}